\documentclass[12pt,reqno]{amsart}
\usepackage[margin=1in]{geometry}
\usepackage{amsmath,amssymb,amsthm,graphicx,amsxtra, setspace}
\usepackage[utf8]{inputenc}
\usepackage{mathrsfs}
\usepackage{hyperref}
\usepackage{upgreek}
\usepackage{mathtools}
\usepackage[dvipsnames]{xcolor}
\usepackage[mathcal]{euscript}
\usepackage{amsmath,units}
\usepackage{pgfplots}
\usepackage{tikz}
\usepackage{verbatim}
\allowdisplaybreaks
\usepackage{dsfont}
\usepackage{fontenc}
\usepackage{textcomp}
\usepackage{marvosym}
\usepackage{eurosym}
\usepackage{upgreek}
\usepackage[pagewise]{lineno}

\DeclareMathAlphabet{\mathpzc}{OT1}{pzc}{m}{it}

\usepackage[cyr]{aeguill}

\colorlet{darkblue}{blue!50!black}

\hypersetup{
	colorlinks,%
	citecolor=blue,%
	filecolor=red,%
	linkcolor=red,%
	urlcolor=blue,%
	pdfnewwindow=true,%
	pdfstartview={FitH}
}

\newtheorem{theorem}{Theorem}[section]

\newtheorem{proposition}[theorem]{Proposition}

\newtheorem{definition}[theorem]{Definition}

\newtheorem{remark}[theorem]{Remark}

\let\originalleft\left
\let\originalright\right
\renewcommand{\left}{\mathopen{}\mathclose\bgroup\originalleft}
\renewcommand{\right}{\aftergroup\egroup\originalright}

\renewcommand{\d}{\/\mathrm{d}\/}

\def\L{\mathbb{L}}

\def\I{\mathrm{I}}

\def\C{\mathrm{C}}
\def\f{\boldsymbol{f}}

\def\D{\mathrm{D}}
\def\y{\mathbf{y}}
\def\q{\boldsymbol{q}}
\def\Y{\boldsymbol{Y}}
\def\Z{\boldsymbol{Z}}

\def\X{\boldsymbol{X}}
\def\x{\boldsymbol{x}}

\def\p{\boldsymbol{p}}

\def\y{\boldsymbol{y}}
\def\z{\boldsymbol{z}}
\def\v{\boldsymbol{v}}

\def\V{\mathbb{V}}
\def\R{\mathbb{R}}
\def\wi{\widetilde}

\def\U{\mathbb{U}}

\def\u{\boldsymbol{u}}

\def\H{\mathbb{H}}

\newcommand{\eps}{\varepsilon}

\renewcommand{\d}{\/\mathrm{d}\/}

\newcommand{\Addresses}{{% additional braces for segregating \footnotesize
		\footnote{
		\noindent \textsuperscript{1,2}Department of Mathematics, Indian Institute of Technology Roorkee-IIT Roorkee,
		Haridwar Highway, Roorkee, Uttarakhand 247667, INDIA.\par\nopagebreak
		\noindent  \textit{e-mail:} \texttt{Manil T. Mohan: maniltmohan@ma.iitr.ac.in, maniltmohan@gmail.com}
		
		\textit{e-mail:} \texttt{Sagar Gautam: sagar\_g@ma.iitr.ac.in, sagargautamkm@gmail.com}
		
		\noindent \textsuperscript{*}Corresponding author.

			\textit{Key words:} convective Brinkman-Forchheimer equations, viscosity solutions, Hamilton-Jacobi-Bellman equations, verification theorem, feedback synthesis, Pontryagin maximum principle
			
			Mathematics Subject Classification (2020): Primary 76D55, 49L25; Secondary 76B75, 49N35, 76S05

}}}

\begin{document}
	
	%	\linenumbers
	
	\title[A verification theorem for optimal control of CBF equations]{A Verification Theorem for an Optimal Control Problem Governed by the Convective Brinkman--Forchheimer Equations
		\Addresses}
		\author[S. Gautam and M. T. Mohan]
	{Sagar Gautam\textsuperscript{1} and Manil T. Mohan\textsuperscript{2*}}

	\maketitle

\begin{abstract}
	This article establishes a verification theorem for an optimal control problem governed by the two- and three-dimensional convective Brinkman--Forchheimer equations on the $d$-dimensional torus, $d\in\{2,3\}$:
	$$\frac{\partial\mathfrak{u}}{\partial t}
	-\mu\Delta\mathfrak{u}
	+(\mathfrak{u}\cdot\nabla)\mathfrak{u}
	+\alpha\mathfrak{u}
	+\beta|\mathfrak{u}|^{r-1}\mathfrak{u}
	+\nabla\mathfrak{p}
	=\boldsymbol{f},
	\qquad
	\nabla\cdot\mathfrak{u}=0,$$
	where $\mu,\alpha,\beta>0$ and $r\in[1,\infty)$. We derive the Pontryagin maximum principle and develop a verification framework for the associated control problem, a topic that has received comparatively little attention for fluid models of Navier--Stokes type. A major challenge in establishing the verification theorem and the corresponding feedback characterization for the CBF system is that the analysis requires a substantially different regularity framework from that used for the two-dimensional Navier--Stokes equations. In particular, the present approach relies on strong solution theory, a delicate treatment of the nonlinear absorption term, novel estimates in negative-order Sobolev spaces, and continuous dependence estimates in stronger topologies, especially in the three-dimensional setting. A distinctive feature of the present work is that the verification framework is developed not only in two dimensions, but also in the three-dimensional supercritical regime, corresponding to $r\in(3,5]$, and in the critical case $r=3$ under the condition $2\beta\mu\geq1$. Consequently, the feedback characterization and verification arguments can be rigorously justified in both two and three dimensions. 
\end{abstract}

	%\tableofcontents
	
	\section{Introduction}\setcounter{equation}{0}
Optimal control problems for fluid flow equations have attracted considerable attention over the past several decades, particularly for systems governed by the Navier--Stokes equations (NSE). While the classical NSE provide an accurate description of the motion of viscous incompressible fluids in open domains (that is, fluid flows  without any solid obstacles or porous structure interacting with it; for example, water flowing in a river, flow through channel, air flowing around a wing, etc.), they fail to adequately model flows through porous media. Such flows arise in numerous practical applications, including groundwater transport in aquifers, oil and gas recovery from reservoirs, and industrial filtration processes. In these settings, the fluid is subjected not only to viscous effects but also to resistance induced by the porous structure, exhibiting linear drag (Darcy's law) at low velocities and nonlinear drag (Forchheimer's law) at higher velocities. Since these phenomena are not captured by the classical NSE, more general fluid models incorporating porous medium effects are required.

Among the various porous media flows, the convective Brinkman-Forchheimer (CBF) equations have been introduced as an extension of the classical NSE and provide an important framework that incorporates viscous diffusion, inertial convection, Darcy resistance, and nonlinear drag effects. Let us now provide the mathematical formulation of the CBF system. Denote by $t$, the initial time with $t\in[0,\infty)$, and by $T$, the terminal time, with $T\in(t,\infty)$. We denote by $\mathbb{T}^{d}=\big(\R/\mathbb{Z}\big)^{d}$, a $d$-dimensional torus with dimension $d\in\{2,3\}$. 
%     This control problem has vast number of industrial and engineering applications (see \cite{SSS}). In such control problems the state variable can be taken as one of the physical quantity such as velocity or vorticity vector field. The state equations are governed by the convective Brinkman-Forchheimer (or \emph{damped Navier-Stokes equations}). 
Then the unknowns are the velocity vector field $\mathfrak{u}(\cdot):[t, T]\times\mathbb{T}^d\to\R^d$ (the state variable) and pressure $p(\cdot):[t, T]\times\mathbb{T}^d\to\R$ which satisfy the following set of equations (known as CBF equations):

\begin{equation}\label{CBF}
	\left\{
	\begin{aligned}
		\frac{\partial\mathfrak{u}}{\partial s}-\mu \Delta\mathfrak{u}+(\mathfrak{u}\cdot\nabla)\mathfrak{u}+\alpha\mathfrak{u}+\beta|\mathfrak{u}|^{r-1}\mathfrak{u}+\nabla\mathfrak{p}&=\mathfrak{g}(s)+\mathfrak{a}(s), \ \text{ in } \ (t,T)\times\mathbb{T}^{d}, \\ \nabla\cdot\mathfrak{u}&=0, \ \text{ in } \ [t,T]\times\mathbb{T}^{d}, \\
		\mathfrak{u}(t)&=\mathfrak{u}_0 \ \text{in} \ \mathbb{T}^{d},\\
		\int_{\mathbb{T}^d}\mathfrak{p}(t,x)\d x&=0, \ \text{ in } \ (t,T),
	\end{aligned}
	\right.
\end{equation}
where $\mathfrak{a}(\cdot)$ is a given measurable function that acts as a control 
%and taking values in some control set, say $\Theta$ (a complete metric space)
 and $\mathfrak{g}(\cdot,\cdot):[0, T]\to\R^d$ represents an external forcing. The final condition in \eqref{CBF} is included to ensure the uniqueness of the pressure.
 Furthermore, the velocity field $\mathfrak{u}(\cdot,\cdot)$ and pressure $\mathfrak{p}(\cdot,\cdot)$ are subject to the following periodic boundary conditions:
\begin{align}\label{pbc}
	\mathfrak{u}(x+e_{i},\cdot) = \mathfrak{u}(x,\cdot)  \ \text{ and }  \ \mathfrak{p}(x+e_{i},\cdot) = \mathfrak{p}(x,\cdot),
\end{align}
for all $x\in\R^{d}$ and $i=1,\ldots,d,$ where $\{e_i\}_{i=1}^d$ denotes the standard basis of $\R^{d}.$ 
The viscous term $-\mu\Delta\mathfrak{u}$ with $\mu>0$ (Brinkman coefficient) accounts for the viscous stress arising from the pore
microstructure. The nonlinear term $(\mathfrak{u}\cdot\nabla)\mathfrak{u}$ describes the convective transport and the linear term $\alpha\mathfrak{u}$ with $\alpha>0$ (Darcy coefficient) represents the viscous resistance at low flow rates. The nonlinear term $\beta|\mathfrak{u}|^{r-1}\mathfrak{u}$ with $\beta>0$ (Forchheimer coefficient) and $r\geq1$ (absorption exponent) represents the Forchheimer correction capturing inertial effects at higher flow rates. For a detailed discussion of the physical background and further properties of the CBF model, we refer the reader to \cite{SGKKMTM}. 

The case $r=3$ is called the \emph{critical exponent} because, when $\alpha=0$, the CBF equations exhibit the same scaling properties as the classical NSE (\cite[Proposition 1.1]{KWH}). 
Compared to classical NSE (which can be recovered from \eqref{CBF} when $\alpha=\beta=0$) the CBF system possesses a richer dissipative structure that
allows for a substantially more complete analytical theory, particularly in
three spatial dimensions. A central role in this theory is played by the
absorption exponent $r\geq 1$, which governs the strength of the
Forchheimer nonlinearity relative to the convective term and determines the
global analytical behaviour of solutions. The following regimes are distinguished:

\begin{itemize}
	\item \textbf{Subcritical case} $1\leq r<3$ and the critical case $r=3$ with $2\beta\mu<1$. The Forchheimer damping
	is weaker relative to the convective nonlinearity. In this regime,
	the analysis is closer in spirit to the pure NSE theory and global regularity in three dimensions remains an open problem. The special case $r=2$ of the CBF equations \eqref{CBF} is commonly referred to as the incompressible Toner--Tu equations; see \cite{HBYPKK}.
	
	\item \textbf{Supercritical case} $r>3$ and the critical case $r=3$ with $2\beta\mu\geq 1$. The Forchheimer drag
	dominates the convective nonlinearity, and the equations enjoy
	significantly better analytical properties, including global
	existence and uniqueness of strong solutions in both two and three
	dimensions.
\end{itemize} 
We emphasize that, in sharp contrast to the three-dimensional Navier--Stokes equations, for which Leray--Hopf weak solutions are only known to satisfy the energy inequality, weak solutions of the CBF system \eqref{CBF} in the critical $(r=3)$ and supercritical $(r>3)$ regimes satisfy the energy equality. Moreover, under the conditions $r=3$ with $2\beta\mu\geq1$ and $r>3$, global weak solutions are unique, a property that remains open for the three-dimensional NSE. The well-posedness theory for \eqref{CBF}, together with the corresponding energy and regularity estimates, has been extensively investigated in \cite{SMTM} and the references therein. Building upon these foundational results, the present work is devoted to the optimal control theory associated with the CBF system \eqref{CBF}.

\subsection{Optimal control problems for NSE and related models} 
The systematic study of optimal control problems governed by NSE was initiated by Lions \cite{JLL} and has since evolved into a comprehensive theory through the contributions of Fursikov \cite{AVf}, Gunzburger \cite{MDG}, Sritharan \cite{SSS}, and many others \cite{FART,VBoc,TRM}, see also the references therein. A fundamental characterization of optimality is provided by the Pontryagin Maximum Principle (PMP), which yields necessary conditions involving the Hamiltonian and an adjoint variable. For finite dimensional systems it was established by Pontryagin and his group \cite{LPVRE}. Extending the PMP to infinite dimensional system governed by PDEs is considerably more involved due to the unboundedness of the underlying operators. We refer to \cite{HOFHF,HOF,XJJMY,JPRHZ,JYXYZ} for representative contributions in this direction. In the context of fluid flow models, Sritharan \cite{SSS1,SSS} established the PMP in the framework of  distributed control problems (see also the works \cite{HOSS1,HOSS,dppfeedmtm,SSS2,SSS4,Hyu}) for the NSE in two dimensions. 
%Since 3D NSE is not known to have global well-posed solutions for arbitrary controls, unlike the 2D case. Their main methodological contribution is rather than treating the state equation as constraint, they approximate the whole problem (state equation plus state constraint) via a penalty functional which depends on both state and control variable. This converts a non wellposed problem into a sequence of wellposed problems 
A notable contribution in the context of the 3D NSE is due to Wang \cite{GengW}, who established the PMP through a penalization approach despite the absence of a global well-posedness theory. The analysis covers optimal control problems subject to three classes of state constraints, namely integral constraints, two-point boundary (time-variable) constraints, and periodic constraints. Furthermore, the authors in \cite{BTAD} derived the PMP for the 3D NSE with pointwise control constraints using the spike variation (needle perturbation) method. Their approach relies on the global regularity of strong solutions, which is ensured under a suitable smallness condition involving the variation parameter and the energy norm of the external forcing.

Another fundamental approach to optimal control is provided by dynamic programming, which characterizes optimality through the value function. By Bellman's principle of optimality, the value function formally satisfies the Hamilton--Jacobi--Bellman (HJB) equation, a fully nonlinear first-order PDE. However, even in finite dimensions, smooth solutions to the HJB equation are generally unavailable (see \cite{MBCD}). This difficulty is overcame by the notion of viscosity solutions, introduced by Crandall and Lions \cite{MGL,MGEL,Hsh1} in the finite-dimensional setting and subsequently extended to infinite-dimensional spaces in \cite{MGL1,MGL2,MGL3}, with further developments due to Tataru \cite{DT1,DT2,DT3}. For the NSE, the dynamic programming approach was initiated by Sritharan \cite{SSS1,SSS}. A major subsequent advance was achieved in \cite{FGSSA1}, where the uniqueness of viscosity solutions for the 2D NSE was established, a considerably more delicate and challenging problem.

The verification theorem, which is the primary focus of the present work, constitutes a cornerstone of the dynamic programming approach. It ensures that a candidate solution of the HJB equation, typically understood in the viscosity sense, coincides with the value function and generates an optimal feedback control. Verification theorems have been widely investigated in both finite- and infinite-dimensional frameworks. Without attempting an exhaustive account, we refer the reader to \cite{VBGDP1,ABGDP,PCGDP,GFAS1,VENR,WHF1,Mgar,JYXYZ,XYZ} and the references therein. These results provide sufficient conditions for optimality and often yield the existence and, in certain cases, an explicit characterization of optimal feedback controls. In the context of the 2D NSE, verification theorems were established by Sritharan \cite{SSS1,SSS}; see also \cite{VBSSSH,HOSS1,HOSS,HOF}.

\subsubsection{Optimal control problems for CBF equations} We now turn to optimal control problems governed by the CBF equations. Despite the well-developed analytical theory for the CBF system, the corresponding optimal control theory remains comparatively less developed than that for the NSE. In two dimensions, the existence of optimal controls and the derivation of first-order necessary optimality conditions were established in \cite{Op1,Op3}. The PMP for the time-optimal control problem was obtained in \cite{TiOp2}, while the corresponding result for CBF equations driven by time-periodic inputs was established in \cite{Op2}. More recently, \cite{KKFC} studied the velocity tracking problem for the 3D critical CBF equations $(r=3)$ and established the existence of optimal controls together with first-order necessary conditions by exploiting the regularity properties of weak solutions. In contrast, the dynamic programming theory for the CBF equations is still far from complete. In particular, although the associated HJB equation was shown in \cite{smtm2} to admit a viscosity solution framework in both two and three dimensions, the corresponding verification theory and feedback characterization of optimal controls have remained open. The main objective of the present work is to bridge this gap by establishing a verification theorem for the CBF system.

\begin{table}[ht]
	\begin{tabular}{|c|c|c|c|c|}
		\hline
		\textbf{Case}&Dimension &$ r$& Conditions on 
		$\mu$ \& $\beta$ \\
		\hline
		\textbf{I}&$d=2$ &$r\in(1,\infty)$&  for any  
		$\mu>0,\alpha>0$ and $\beta>0$  \\
		\hline
		\textbf{II}&$d=3$ &$r\in(3,5]$&  for any  
		$\mu>0,\alpha>0$ and $\beta>0$  \\
		\hline
		\textbf{III}&$d=3$ &$r=3$&for $\mu>0,\alpha>0$ and
		$\beta>0$ with $2\beta\mu\geq1$ \\
		\hline
	\end{tabular}
	\vskip 0.1 cm
	\caption{Values of $\mu,\alpha$ and $\beta$, and $r$ for the verification result.}
	\label{Table1}
\end{table}

\subsection{Mathematical challenges, approaches and main contributions}
%The current work establishes the PMP and verification theorem for the optimal control problem of the CBF equations \eqref{CBF} in both 2D and 3D, inspired by the work \cite{SSS1,SSS}, which was carried out for the 2D NSE. Although the underlying infinite-dimensional framework closely follows \cite{SSS1,SSS} (for the 2D NSE), extending it to the CBF system \eqref{CBF}, especially in dimension three, is far from straightforward. The presence of the absorption term fundamentally changes the analytical structure of the problem and introduces several new difficulties. At the same time, the absorption term provides stronger regularisation effects (in the supercritical regime), which, when carefully exploited, allow us to establish the verification-type results in both two and three space dimensions. We describe the principal mathematical difficulties and the approaches introduced to overcome them below.
The primary objective of the present work is to establish a verification theorem for the optimal control problem governed by the CBF system \eqref{CBF}. A fundamental ingredient in our analysis is the global well-posedness of strong solutions. In fact, several key aspects of the verification theory developed herein, including the derivation of estimates in negative-order Sobolev spaces, continuous dependence results in stronger topologies, the study of the linearized system, and the feedback characterization of optimal controls, depend crucially on strong-solution regularity. For the CBF equations on the torus $\mathbb{T}^d$, $d\in{2,3}$, it is known that the nonlinear absorption term in the critical and supercritical regimes $(r=3$ and $r>3)$ provides sufficient dissipation to guarantee the global existence and uniqueness of strong solutions in both two and three dimensions.The global solvability relies on the identity
\begin{align}\label{torusequality}
	\int\limits_{\mathbb{T}^d}(-\Delta \mathfrak{u}(x))\cdot|\mathfrak{u}(x)|^{r-1}\mathfrak{u}(x)\d x= \int\limits_{\mathbb{T}^d}|\nabla\mathfrak{u}(x)|^2|\mathfrak{u}(x)|^{r-1}\d x +\left[\frac{4(r-1)}{(r+1)^2}\right]
	\int\limits_{\mathbb{T}^d}|\nabla|\mathfrak{u}(x)|^{\frac{r+1}{2}}|^2\d x,
\end{align}
which holds without boundary terms on the torus. By contrast, the situation on a bounded domain $\mathcal{O}\subset\mathbb{R}^d$ is considerably more delicate. In this setting, the Leray--Helmholtz projection $\mathcal{P}$ and the Laplacian $-\Delta$ do not, in general, commute, the quantity $\mathcal{P}(|\mathfrak{u}|^{r-1}\mathfrak{u})$ need not satisfy homogeneous boundary conditions, and the pressure term can no longer be eliminated; see \cite{KT2}. Furthermore, unlike the case of mean-zero vector fields, the Poincar\'e inequality is not directly applicable since the spatial average of $\mathfrak{u}$ need not vanish. Consequently, one must work with the full $\mathbb{H}^1$-norm rather than the seminorm induced by the gradient. Another essential difference stems from the spectral properties of the Stokes operator $\mathcal{A}$ on the torus $\mathbb{T}^d$: since $\boldsymbol{0}$ belongs to the spectrum of $\mathcal{A}$, the operator is not invertible. To avoid difficulties arising from the lack of invertibility of $\mathcal A$, we work with the shifted operator $\mathcal A+\mathrm I$ throughout the analysis.

%A key ingredient in the feedback analysis of the associated optimal control problem is the continuity properties of the auxiliary functional $\mathcal{W}$ (see Section \ref{feedbackana}). For this, initially, we first need to establish the following continuous dependence estimates 
%\begin{align}\label{zz12strong}
%	\sup\limits_{t\in[0,T]}\|\Z_1-\Z_2\|_{\V}\leq C\|\Z_1(0)-\Z_2(0)\|_{\V},
%\end{align}
%for some constant $C$, where $\Z_1$ and $\Z_2$ are two strong solutions of the system \eqref{stapPMP}. We show that the estimate
%can be established by carefully exploiting the coercivity of the absorption
%term, energy estimates \eqref{eqncont2} for the strong solutions, and Sobolev embedding $\V\hookrightarrow\wi\L^{r+1}$ for $r$ in Table \ref{Table1} (see the proof of \eqref{supVnormest} in Proposition \ref{supVnorm}). For $r\leq 3$,
%the analysis is similar to that of NSE, and the required regularity cannot be propagated (where such an estimate \eqref{zz12strong} cannot be established globally with the regularity of the strong solution). 

A cornerstone of the analysis required for verification result is the continuous dependence estimate in the negative Sobolev norm $\|\cdot\|_{\D(\mathcal{A}+\I)^{-\frac12}}$ (equivalently, the $\V^{*}$-norm), which is ultimately used to establish the locally Lipschitz property of the value function with respect to the initial data (see Section \ref{anoptimaL}) and further used in feedback analysis (Section \ref{feedbackana}). For the 2D NSE, such an estimate follows directly from the energy inequality satisfied by weak solutions. For the CBF system, however, weak solutions alone do not provide sufficient regularity to close the required estimates. Specifically, one must estimate the difference $\Z_1-\Z_2$ of two solutions of the CBF system \eqref{stapPMP} in the negative norm by controlling the paring 
\begin{align}\label{bz1z2diff}
	(\mathfrak{M}(\Z_1)-\mathfrak{M}(\Z_2),(\mathcal{A}+\I)^{-1}(\Z_1-\Z_2)),
\end{align}
where $\mathfrak{M}(\cdot)=\mathfrak{B}(\cdot)$ or $\mathfrak{C}(\cdot)$. For the 2D NSE, the term \eqref{bz1z2diff} with $\mathfrak{M}(\cdot)=\mathfrak{B}(\cdot)$ is estimated in \cite{SSS} by combining the Ladyzhenskaya inequality with the energy bounds available for weak solutions. In three dimensions, however, the energy class no longer provides sufficient regularity to control the trilinear term in \eqref{bz1z2diff}; see \eqref{bz12}. Consequently, the derivation of the required estimate necessitates that $\Z_1$ and $\Z_2$ be strong solutions, which constitutes a genuine difficulty absent in the two-dimensional NSE theory. A further complication arises from the nonlinear absorption term, which introduces an additional contribution corresponding to $\mathfrak{M}(\cdot)=\mathfrak{C}(\cdot)$, a feature that has no analogue in the classical NSE framework. To estimate \eqref{bz1z2diff}, we exploit the embedding $\V\hookrightarrow\widetilde{\mathbb{L}}^{r+1}$ together with the strong-solution energy estimates \eqref{eqncont2}. This strategy is valid for the range of exponents $r$ listed in Table \ref{Table1} (see the proof of \eqref{z12negsob}) and leads to the negative-order estimate \eqref{z12negsob} established in Proposition \ref{supVnorm}. To the best of our knowledge, such estimates for the CBF system have not previously appeared in the literature. For completeness and the convenience of the reader, we provide detailed and self-contained proofs.

The feedback analysis constitutes one of the central components of the present work. A key ingredient in this analysis is the introduction of the auxiliary functional $\mathcal{W}$, defined in \eqref{wfunc} (see Section \ref{feedbackana}), whose differentiability properties play a crucial role in identifying the adjoint variable and deriving the PMP from the HJB equation. While this program was successfully carried out for the 2D NSE in \cite{SSS1,SSS}, the corresponding analysis for the CBF system is substantially more delicate, particularly in the three-dimensional supercritical regime $3<r\leq 5$.
We now highlight some of the specific challenges encountered in the feedback analysis. A crucial step in establishing the differentiability properties of the auxiliary functional $\mathcal{W}$ is the study of the linearized state equation associated with \eqref{stapocp}. In particular, the analysis requires a careful investigation of the following linearization of the state system:
\begin{align*}
-\frac{\d\Psi}{\d t}+\mathcal{L}(\Psi)=0, \ 
\mathcal{L}(\Psi):=\mathcal{A}\Psi+\mathfrak{B}'(\Z)\Psi+\alpha\Psi
+\beta\mathfrak{C}'(\Z)\Psi,
\end{align*}
where $\mathfrak{B}'(\cdot)$ and $\mathfrak{C}'(\cdot)$ denote the Fr\'echet derivatives of the nonlinear operators $\mathfrak{B}(\cdot)$ and $\mathfrak{C}(\cdot)$, respectively, supplemented with an appropriate initial condition (see Section \ref{feedbackana}). Here, $\Z$ denotes the strong solution of the state system \eqref{stapPMP}. A crucial step in the analysis is to prove that $\mathcal{D}_{\z}\mathcal{W}$ belongs to $\V$ and is locally Lipschitz continuous (see Proposition \ref{WpropertyLip}). To achieve this, it is necessary to derive suitable energy estimates together with continuous dependence estimates in the negative Sobolev norm $\|\cdot\|_{\D(\mathcal{A}+\I)^{-1/2}}$. This, in turn, requires estimating the following terms:
\begin{align*}
	(\mathfrak{M}'(\Z)\Psi,(\mathcal{A}+\I)^{-1}\Psi) \ \text{ and } \ 
	(\mathfrak{M}'(\Z_1)\Psi_1-\mathfrak{M}'(\Z_2)\Psi_2,
	(\mathcal{A}+\I)^{-1}\Psi_1-(\mathcal{A}+\I)^{-1}\Psi_2).
\end{align*}
The derivation of these estimates is highly nontrivial. In particular, establishing the negative-order energy estimates requires a delicate treatment of both the convective and absorption nonlinearities through duality arguments involving the operator $(\mathcal{A}+\I)^{-1}$. A complete and systematic analysis of these estimates is carried out in Proposition \ref{phinegatest}. Although analogous estimates are available for the 2D NSE, their extension to the CBF system is far from straightforward owing to the presence of the nonlinear damping term. To the best of our knowledge, such estimates have not previously been developed in a systematic manner in the setting of optimal control and verification theory.

\subsection{Significance and novelties of the present work}
 At first glance, the feedback analysis for the CBF equations might appear to be a straightforward adaptation of the NSE results. However, this is not the case for several reasons:
\vskip 2mm
\noindent
 \textbf{Lack of existing verification results:} At present, no rigorous verification theorem for the CBF system appears to be available in the literature. In particular, a proof that the value function is a viscosity solution of the associated HJB equation and that the corresponding optimal control satisfies the PMP through the HJB framework has not been established. The additional dissipation induced by the Forchheimer term $|\mathfrak{u}|^{r-1}\mathfrak{u}$ enables us to develop this theory not only in two dimensions, but also in the physically relevant three-dimensional supercritical regime $3<r\leq 5$. Consequently, the present work provides the first comprehensive dynamic programming framework, including verification and feedback synthesis, for this class of porous-media flow models.

 \vskip 2mm
 \noindent
 \textbf{Three‑dimensional relevance:} The works \cite{SSS1,SSS} are confined to the 2D NSE, reflecting the absence of a global well-posedness theory for strong solutions in three dimensions. By contrast, for the CBF system with $r$ belonging to the range specified in Table \ref{Table1}, the enhanced dissipation induced by the Forchheimer term guarantees the global existence and uniqueness of strong solutions even in three dimensions. This marks a fundamental distinction between the CBF equations and the 3D NSE.  Indeed, although verification and feedback results are available for the 2D NSE, their extension to the three-dimensional setting is hindered by the longstanding open problem of global regularity. As a consequence, several key ingredients underlying the dynamic programming approach cannot presently be justified globally in time for the 3D NSE. In contrast, the global strong solvability of the CBF system enables us to carry out the feedback analysis and establish the verification theorem in the physically more relevant three-dimensional setting. This constitutes one of the principal novelties and advantages of the present work. 
 
 Moreover, in the existing literature, most optimal control problems have been studied for the two-dimensional CBF equations (see \cite{TiOp2,Op1,Op3,Op2}) and for the three-dimensional critical CBF equations (see \cite{KKFC}). Therefore, the present work significantly extends the optimal control results established in these earlier works.
 
 \vskip 2mm
 \noindent
 \textbf{Detailed handling of the supercritical regime:} The exponent $r\in(3,5]$ is precisely the range in which the CBF system exhibits supercritical damping while still allowing the required Sobolev embeddings. Moreover, the negative‑norm estimates and continuous dependence results (see the results in Sections \ref{abscon} and \ref{feedbackana}) are carefully tuned to this range, and we provide all the algebraic details that are often omitted in the NSE literature. This level of detail is necessary because the CBF nonlinearity does not enjoy the same algebraic simplifications as the NSE.
 \subsection{Organization of the article} 
 The remainder of the paper is organized as follows. In Section \ref{Sec-2}, we introduce the functional framework and recall several properties of the bilinear and Forchheimer nonlinearities. Section \ref{abscon} presents the abstract formulation of the CBF system, reviews the corresponding well-posedness theory, and establishes continuous dependence estimates in both Sobolev and negative Sobolev norms (Proposition \ref{supVnorm}). In Section \ref{LADJsys}, we introduce the linearized and adjoint systems associated with \eqref{stapPMP} and recall their well-posedness properties (Theorems \ref{soLvaLin}--\ref{soLvaadj}). Section \ref{anoptimaL} is devoted to the formulation of the optimal control problem and a discussion of the associated value function, including Bellman's principle of optimality. In Section \ref{feedbackana}, we introduce the auxiliary functional $\mathcal{W}$ defined in \eqref{wfunc} and establish its differentiability and local Lipschitz properties. Finally, Section \ref{Pontryagin} contains the main results of the paper. There, we derive the Pontryagin maximum principle from the viscosity solution property and prove the verification theorem (Theorem \ref{verifmain}), which characterizes the superdifferential of the value function $\mathpzc{V}$ and yields the corresponding optimal feedback law (Theorem \ref{PMPpr}).

	\section{Functional Framework}\label{Sec-2}\setcounter{equation}{0}
In this section, we introduce the functional framework and the linear and nonlinear operators that will be used throughout the paper. The notation and analytical setting adopted here are based on those developed in \cite{JCR1,Te1}.
	\subsection{Function spaces}\label{zerofunc}
	Let us denote by $\C_{\mathrm{p}}^{\infty}(\mathbb{T}^d;\R^d)$, the space of all infinitely differentiable  functions $\mathfrak{u}$ satisfying \eqref{pbc}. 
	The completion of $\C_{\mathrm{p}}^{\infty}(\mathbb{T}^d;\R^d)$  with respect to the $\H^s$-norm is the Sobolev space $\H_{\mathrm{p}}^s(\mathbb{T}^d):=\mathrm{H}_{\mathrm{p}}^s(\mathbb{T}^d;\mathbb{R}^d)$.  From \cite[Proposition 5.39]{JCR1}, the Sobolev space of periodic functions, that is $\H_{\mathrm{p}}^s(\mathbb{T}^d)$, is same as the following: 
	$$\left\{\mathfrak{u}:\mathfrak{u}=\sum_{k\in\mathbb{Z}^d}\mathfrak{u}_{k}\mathrm{e}^{2\pi i k\cdot x},\ \overline{\mathfrak{u}}_{k}=\mathfrak{u}_{-k}, \  \|\mathfrak{u}\|_{{\H}^s_f}:=\left(\sum_{k\in\mathbb{Z}^d}(1+|k|^{2s})|\mathfrak{u}_{k}|^2\right)^{1/2}<\infty\right\}.$$ 
   \begin{remark}
   	We do not impose the zero mean condition on $\mathfrak{u}$ because the absorption term $\beta|\mathfrak{u}|^{r-1}\mathfrak{u}$ does not satisfies this property (see \cite{MTT}). As a result, the standard Poincar\'e inequality is not applicable, and we must work with the full $\H^1$-norm instead.
   \end{remark}
	Let us define 
	\begin{align*} 
		\mathcal{V}:=\{\mathfrak{u}\in\C_{\mathrm{p}}^{\infty}(\mathbb{T}^d;\R^d):\nabla\cdot\mathfrak{u}=0\}.
	\end{align*}
	We denote by $\H$, the closure of $\mathcal{V}$ in the Lebesgue space $\L^2(\mathbb{T}^d):=\mathrm{L}^2(\mathbb{T}^d;\R^d)$ and by $\widetilde{\L}^{p}$, the closure of $\mathcal{V}$ in the Lebesgue space $\L^p(\mathbb{T}^d):=\mathrm{L}^p(\mathbb{T}^d;\R^d)$ for $p\in(2,\infty]$, respectively. We endow the space $\H$ with the inner product and norm of $\L^2(\mathbb{T}^d),$ and are denoted by $(\cdot,\cdot)$ and $\|\cdot\|_{\H}$, respectively. 
	For $p\in(2,\infty)$ and for $p=\infty$, the spaces $\widetilde{\L}^{p}$ and $\widetilde{\L}^{\infty}$ are, respectively, equipped with the $\|\cdot\|_{\widetilde{\L}^p}-$norm of $\L^p(\mathbb{T}^d)$ and with the $\|\cdot\|_{\widetilde{\L}^{\infty}}-$ norm of $\L^{\infty}(\mathbb{T}^d)$.
	
	We also define the space $\V$ as the closure of $\mathcal{V}$ in the Sobolev space $\H^1_{\mathrm{p}}(\mathbb{T}^d)$. We equip the space $\V$ with the inner product $(\cdot,\cdot)_{\V}:=(\cdot,\cdot)+(\nabla\cdot,\nabla\cdot)$ and norm $\|\cdot\|_{\V}^2:=\|\cdot\|_{\H}^2+\|\nabla\cdot\|_{\H}^2$, respectively. 
	Let $\langle \cdot,\cdot\rangle $ denote the duality pairing between the spaces $\V$  and its dual $\V^{*}$, as well as between $\widetilde{\L}^{r+1}$ and its dual $\widetilde{\L}^{\frac{r+1}{r}}$. Note that $\H$ can be identified with its own dual $\H^{*}$. According to \cite[Subsection 2.1]{FKS}, the sum space $\V^{*}+\widetilde{\L}^{\frac{r+1}{r}}$ is well defined and forms a Banach when equipped with the norm 
	\begin{align*}
		\|\mathfrak{u}\|_{\V^{*}+\widetilde{\L}^{\frac{r+1}{r}}}&:=
		\inf\{\|\mathfrak{u}_1\|_{\V^{*}}+\|\mathfrak{u}_2\|_{\wi\L^{\frac{r+1}{r}}}:
		\mathfrak{u}=\mathfrak{u}_1+\mathfrak{u}_2, \mathfrak{u}_1\in\V^{*} \ \text{and} \ \mathfrak{u}_2\in\wi\L^{\frac{r+1}{r}}\}\nonumber\\&=
		\sup\left\{\frac{|\langle\mathfrak{u},\mathfrak{v}
			\rangle|}{\|\mathfrak{v}\|_{\V\cap\widetilde{\L}^{r+1}}}:
			\boldsymbol{0}\neq
		\mathfrak{v}\in\V\cap\widetilde{\L}^{r+1}\right\},
	\end{align*}
	where the norm $\|\cdot\|_{\V\cap\widetilde{\L}^{r+1}}$ on the intersection space $\V\cap\wi\L^{r+1}$ is defined by
	$\|\cdot\|_{\V\cap\widetilde{\L}^{r+1}}:=\max\{\|\cdot\|_{\V}, \|\cdot\|_{\wi\L^{r+1}}\},$ which is equivalent to the norms $\|\cdot\|_{\V}+\|\cdot\|_{\wi\L^{r+1}}$ and $\sqrt{\|\cdot\|_{\V}^2+\|\cdot\|_{\wi\L^{r+1}}^2}$.

	\subsection{Stokes operator}\label{linope}
	 	The Stokes operator is defined by 
	\begin{equation}\label{stokesope}
		\left\{
		\begin{aligned}
	\mathcal{A}\mathfrak{u}&:=-\mathcal{P}\Delta\mathfrak{u}=-\Delta\mathfrak{u},\;\mathfrak{u}\in\D(\mathcal{A}),\\
			\D(\mathcal{A})&:=\V\cap{\H}^{2}_\mathrm{p}(\mathbb{T}^d),
		\end{aligned}
		\right.
	\end{equation}
	where \(\mathcal{P}:\mathbb{L}^{2}(\mathbb{T}^{d})\to\H\) is the Helmholtz--Hodge projection (cf. \cite{DFHM}), that is, the orthogonal projection onto the space \(\H\). By the definition of the $\|\cdot\|_{\H^2_\mathrm{p}}-$norm and an application of Parseval's identity, it follows that the norms $\|\cdot\|_{\H^2_\mathrm{p}}$ and $\|\cdot\|_{\H}+\|\mathcal{A}\cdot\|_{\H}$ are equivalent. Moreover,  $\D(\I+\mathcal{A})=\H^2_\mathrm{p}(\mathbb{T}^d)\cap\V=:\V_2$.

	\subsection{Bilinear operator}\label{bilinope}
		Let us define the trilinear form $b(\cdot,\cdot,\cdot):\V\times\V\times\V\to\R$ by
		\begin{align*}
			b(\mathfrak{u},\mathfrak{v},\mathfrak{w})=
			\int_{\mathbb{T}^d}(\mathfrak{u}(x)\cdot\nabla)\mathfrak{v}(x)\cdot
			\mathfrak{w}(x)\d x.
		\end{align*} 
		It is known that $b(\cdot,\cdot,\cdot)$ is continuous (see \cite{Te1}) and satisfies
		\begin{align}\label{skewsym}
		b(\mathfrak{u},\mathfrak{v},\mathfrak{w})=-
		b(\mathfrak{u},\mathfrak{w},\mathfrak{v}), \ \text{ for all } \ \mathfrak{u}\in\V, \ \mathfrak{v}, \mathfrak{w}\in\H^1_{\mathrm{p}}(\mathbb{T}^d).
		\end{align} 
		We define a bilinear map $\mathfrak{B}(\cdot,\cdot):\V\times\V\to\R$ such that $$\langle\mathfrak{B}(\mathfrak{u},\mathfrak{v}),\mathfrak{w}\rangle=b(\mathfrak{u},\mathfrak{v},\mathfrak{w}) \ \text{ for all } \ \mathfrak{u},\mathfrak{v},\mathfrak{w}\in\V.$$ 
		A direct consequence of \eqref{skewsym} is the following identity
		\begin{align}\label{syymB}
			\langle\mathfrak{B}(\mathfrak{u},\mathfrak{v}),\mathfrak{v}\rangle=0,
			\ \text{ for any } \ \mathfrak{u},\mathfrak{v}\in\V.
		\end{align}

		\begin{remark}\label{BLrem}
		In view of \eqref{syymB}, along with H\"older's and Young's inequalities, the bilinear operator $\mathfrak{B}(\cdot)$ satisfies the following inequalities, for $r>3$ (see \cite[Theorem 2.5]{SMTM}):
			\begin{align}
				|\langle\mathfrak{B}(\mathfrak{u})-\mathfrak{B}(\mathfrak{v}),
				\mathfrak{u}-\mathfrak{v}\rangle|&\leq
				\frac{\mu }{2}\|\nabla(\mathfrak{u}-\mathfrak{v})\|_{\H}^2 +\frac{\beta}{4}\||\mathfrak{v}|^{\frac{r-1}{2}}(\mathfrak{u}-\mathfrak{v})\|_{\H}^2 +\varrho\|\mathfrak{u}-\mathfrak{v}\|_{\H}^2,\label{3.4}\\
		    \text{ and } \	|(\mathfrak{B}(\mathfrak{u}),\mathcal{A}\mathfrak{u})|&\leq\frac{\mu}{2}\|\mathcal{A}\mathfrak{u}\|_{\H}^2+\frac{\beta}{4} \||\mathfrak{u}|^{\frac{r-1}{2}}\nabla\mathfrak{u}\|_{\H}^2+\varrho\|\nabla\mathfrak{u}\|_{\H}^2,\label{syymB3}
			\end{align}
			where \begin{align}\label{eqn-varrho}
				\varrho:=\frac{r-3}{2\mu(r-1)}\left[\frac{4}{\beta\mu (r-1)}\right]^{\frac{2}{r-3}}.
				\end{align}
		\end{remark}

	 \subsection{Nonlinear operator}\label{nonlinope}
		We define the operator $$\mathfrak{C}(\mathfrak{u}):=\mathcal{P}(|\mathfrak{u}|^{r-1}\mathfrak{u})\ \text{ for }\ \mathfrak{u}\in\V\cap\L^{r+1}.$$  From \cite[Remark 1.6]{Te}, the operator $\mathfrak{C}(\cdot):\V\cap\widetilde{\L}^{r+1}\to\V^{*}+\widetilde{\L}^{\frac{r+1}{r}}$ is well-defined. Moreover, it satisfies the identity $\langle\mathfrak{C}(\mathfrak{u}),\mathfrak{u}\rangle =\|\mathfrak{u}\|_{\widetilde{\L}^{r+1}}^{r+1}.$ Furthermore, for all $\mathfrak{u}\in\V\cap\L^{r+1}$, the map $\mathfrak{C}(\cdot):\V\cap\widetilde{\L}^{r+1}\to\V^{*}+\widetilde{\L}^{\frac{r+1}{r}}$ is Gateaux differentiable with Gateaux derivative given by 
		\begin{align}\label{C1d}
			\mathfrak{C}'(\mathfrak{u})\mathfrak{v}&=\left\{\begin{array}{cl}\mathcal{P}(\mathfrak{v}),&\text{ for }r=1,\\ \left\{\begin{array}{cc}\mathcal{P}(|\mathfrak{u}|^{r-1}\mathfrak{v})+(r-1)\mathcal{P}\left(\frac{\mathfrak{u}}{|\mathfrak{u}|^{3-r}}(\mathfrak{u}\cdot\mathfrak{v})\right),&\text{ if }\mathfrak{u}\neq \mathbf{0},\\\mathbf{0},&\text{ if }\mathfrak{u}=\mathbf{0},\end{array}\right.&\text{ for } 1<r<3,\\ \mathcal{P}(|\mathfrak{u}|^{r-1}\mathfrak{v})+(r-1)\mathcal{P}(\mathfrak{u}|\mathfrak{u}|^{r-3}(\mathfrak{u}\cdot\mathfrak{v})), &\text{ for }r\geq 3,\end{array}\right.
		\end{align}
		for all $\mathfrak{v}\in\V\cap\widetilde{\L}^{r+1}$. 
%		For $r\geq3$, we also have the existence of second order Gateaux derivative given by 
%		\begin{align}\label{C2d}
%			&\mathfrak{C}''(\mathfrak{u})(\mathfrak{v}\otimes\mathfrak{w})
%			\nonumber\\ &=\left\{\begin{array}{cl} \left\{\begin{array}{cc}(r-1)\mathcal{P}\{|\mathfrak{u}|^{r-3}
%					\left(\mathfrak{u}\cdot\mathfrak{v})\mathfrak{v}+(\mathfrak{u}\cdot\mathfrak{v})\mathfrak{v}+(\mathfrak{v}\cdot\mathfrak{v})\mathfrak{u}\right)\},&\text{ if }\mathfrak{u}\neq \mathbf{0},\\\mathbf{0},&\text{ if }\mathfrak{u}=\mathbf{0},
%					\end{array}\right.&\text{ for } 3<r<5,\\ (r-1)\mathcal{P}\{|\mathfrak{u}|^{r-3}\left(\mathfrak{u}\cdot\mathfrak{v})\mathfrak{v}+(\mathfrak{u}\cdot\mathfrak{v})\mathfrak{v}+(\mathfrak{v}\cdot\mathfrak{v})\mathfrak{u}\right)\}\\
%				+(r-1)(r-3)\mathcal{P}\{|\mathfrak{u}|^{r-5}(\mathfrak{u}\cdot\mathfrak{v})(\mathfrak{u}\cdot\mathfrak{v})\mathfrak{u}\}, &\text{ for }r\geq5,\end{array}\right.
%		\end{align}
%		for all $\mathfrak{u},\mathfrak{v},\mathfrak{w}\in\V\cap{\wi\L}^{r+1}$.
\begin{remark}
	1.) (\textbf{Monotonicity of} $\mathfrak{C}(\cdot)$) For every $r\geq1$ and for all $\mathfrak{u},\mathfrak{v}\in\widetilde{\L}^{r+1}$, we have following estimate ({cf. \cite[Subsection 2.4]{SMTM}}):
	\begin{align}\label{monoC2}
		\langle\mathfrak{C}(\mathfrak{u})-\mathfrak{C}(\mathfrak{v}),\mathfrak{u}-\mathfrak{v}\rangle\geq \frac{1}{2}\||\mathfrak{u}|^{\frac{r-1}{2}}(\mathfrak{u}-\mathfrak{v})\|_{\H}^2+\frac{1}{2}\||\mathfrak{v}|^{\frac{r-1}{2}}(\mathfrak{u}-\mathfrak{v})\|_{\H}^2\geq\frac{1}{2^{r-1}}\|\mathfrak{u}-\mathfrak{v}\|_{\wi\L^{r+1}}^{r+1}.
	\end{align}
	
	2.) The following equality holds on $\mathbb{T}^d$ (see 
	\cite[Lemma 2.1]{KWH}):
	\begin{align}\label{torusequ}
		(\mathfrak{C}(\mathfrak{u}),\mathcal{A}\mathfrak{u})
		=\||\mathfrak{u}|^{\frac{r-1}{2}}\nabla\mathfrak{u}\|_{\H}^{2} +4\left[\frac{r-1}{(r+1)^2}\right]
		\|\nabla|\mathfrak{u}|^{\frac{r+1}{2}}\|_{\H}^{2}.
	\end{align}
\end{remark}

\begin{remark}[Frequently used inequalities] Throughout this article, we frequently used the Sobolev embedding, Agmon and Ladyzhenskaya inequalities. To avoid  repeated references, we fix constant as follows:
	
	1.) Let $C_e>0$ denote a generic constant associated with the Sobolev embedding $\V\hookrightarrow\wi\L^{r+1}$ namely
	\begin{align*}
		\|\cdot\|_{\L^{r+1}}\leq C_e\|\cdot\|_{\V},
	\end{align*}
	where $r\geq1$ in $d=2$ and $r\in[1,5]$ in $d=3$. 
	
	2.) Let $C_a>0$ denote the constant appearing in Agmon's inequality
	\begin{align*}
		\|\cdot\|_{\wi\L^{\infty}}\leq C_a\|\cdot\|_{\H}^{1-\frac{d}{4}} \|\cdot\|_{\H^2_\mathrm{p}(\mathbb{T}^d)}^{\frac{d}{4}}, \ \ d\in\{2,3\}.
	\end{align*} 
	
	3.) Let $C_{\ell}$ denote the constant appearing in the Ladyzhenskaya inequality:
	\begin{align*}
		\|\cdot\|_{\wi\L^4}\leq C_{\ell}\|\cdot\|_{\V}^{\frac{d}{4}}
		\|\cdot\|_{\H}^{1-\frac{d}{4}},
	\end{align*}
	where $C_{\ell}:=2^{\frac{d-1}{4}}$, $d\in\{2,3\}$. For convenience, we shall use the generic symbol $C$ to denote any of the above constants whenever no confusion can arise.
	
	4.) Unless explicitly stated otherwise, $C$ denotes a generic positive constant whose value may vary from line to line. Such constants typically arise from applications of H\"older's inequality, Young's inequality, interpolation inequalities, and embedding results, and their precise values are immaterial for the subsequent analysis.
\end{remark}

 \begin{definition}[\cite{SSS2}]\label{superdff}
 	Let $\mathbb{X}$ be a separable Hilbert space and let $\mathpzc{f}:\mathbb{X}\to\R$ be a locally Lipschitz function. The superdifferential of $\mathpzc{f}$ at $\x\in\mathbb{X}$ is the subset of $\mathbb{X}^{*}$
 	\begin{align*}
 	\mathcal{D}^{+}\mathpzc{f}(\x):=
 	\left\{\boldsymbol{\zeta}\in\mathbb{X}^{*}:\limsup\limits_{\y\to\x}\left[
 	\frac{\mathpzc{f}(\y)-\mathpzc{f}(\x)-\langle\boldsymbol{\zeta},
 		\y-\x\rangle}{\|\y-\x\|_{\mathbb{X}}}
 	\right]
 	\leq0\right\}.
 	\end{align*}
Equivalently, $\boldsymbol{\zeta}\in\mathbb{X}^{*}$ belongs to $\mathcal{D}^{+}\mathpzc{f}(\x)$, $\x\in\V$, if
 	\begin{align*}
 	\mathpzc{f}(\x+\boldsymbol{h})-\mathpzc{f}(\x)\leq\langle\boldsymbol{\zeta},
 	\boldsymbol{h}\rangle+
 	o(\|\boldsymbol{h}\|_{\mathbb{X}})  \ \text{ as } \ \|\boldsymbol{h}\|_{\mathbb{X}}\to0,
 	\end{align*}
 	for all $\boldsymbol{h}\in\mathbb{X}$. 
 \end{definition}
 
	\section{Abstract formulation and Auxiliary results}\label{abscon} \setcounter{equation}{0}
	We begin this section by presenting an abstract formulation of the system \eqref{CBF} and establishing uniform energy estimates, followed by proofs of continuous dependence results. 
   Let us set $\Z(\cdot):=\mathcal{P}\mathfrak{u}(\cdot)$, $\mathcal{P}\mathfrak{u}_0:=\z$,  $\mathcal{P}\mathfrak{g}=\f$, $\mathcal{P}\mathfrak{a}=\u$. On projecting the first equation in \eqref{CBF}, we obtain the following abstract controlled CBF equations:
	\begin{equation}\label{stapPMP}
		\left\{
		\begin{aligned}
			\frac{\d\Z(s)}{\d s}&= -\mu\mathcal{A}\Z(s)-\mathfrak{B}(\Z(s))-\alpha\Z(s)-
			\beta\mathfrak{C}(\Z(s))+
			\f(s)+\u(s),  \  \text{ in } \ (t,T)\times\H, \\
			\Z(t)&=\z\in\H.
		\end{aligned}
		\right.
	\end{equation}
	Let us recall some well-posedness results for the system \eqref{stapPMP}. For the purpose of this analysis, we assume that the control $\u$ belongs to $\mathrm{L}^2(t,T;\H)$. A precise definition of the admissible control class and the control set will be introduced later in the optimal control section (Section \ref{anoptimaL}). At this stage this assumption is sufficient from the viewpoint of well-posedness. 
	\begin{definition}\label{def-var-strong} Let $\u\in\mathrm{L}^2(t,T;\H)$. A function $$\Z(\cdot)\in\mathrm{C}([t,T];\H)\cap\mathrm{L}^2(t,T;\V)\cap\mathrm{L}^{r+1}(t,T;\wi\L^{r+1}),$$
		with $\frac{\d\Z}{\d s}\in\mathrm{L}^2(t,T;\V^{*})+ \mathrm{L}^{\frac{r+1}{r}}(t,T;\wi\L^{\frac{r+1}{r}})$ is called a \emph{weak solution} to \eqref{stapPMP}, 
%		if for a given $\f(\cdot)\in\mathrm{L}^2(t,T;\V^{*})$ and $\Z(t)=\z\in\H$, we have 
%		$$\sup_{s\in[t,T]}\|\Z(s)\|_{\H}^2+\int_t^T \|\Z(s)\|_{\V}^2\d s + \int_t^T\|\Z(s)\|_{\wi\L^{r+1}}^{r+1}\d s<+\infty,$$ and 
		if the function $\Z(\cdot)$ satisfies the following weak formulation:
		\begin{align*}
			(\Z(s),\boldsymbol{\phi})&=(\z,\boldsymbol{\phi})+\int_t^s  \langle-\mu\mathcal{A}\Z(\tau)-\mathfrak{B}(\Z(\tau))-\alpha\Z(\tau)-
			\beta\mathfrak{C}(\Z(\tau))+\f(\tau)+\u(\tau),\boldsymbol{\phi}\rangle\d\tau,
		\end{align*} 
		  for every $\boldsymbol{\phi}\in\V\cap\wi\L^{r+1}$ and every $s\in[t,T]$.
	    Furthermore, if $\f\in\mathrm{L}^2(t,T;\H)$ and $\z\in\V$, then a function 
		$$\Z(\cdot)\in\mathrm{C}([t,T];\V)\cap\mathrm{L}^2(t,T;\D(\mathcal{A}))\cap
		\mathrm{L}^{r+1}(t,T;\wi\L^{p(r+1)})$$ is called a \emph{strong solution} of \eqref{stapPMP} with $\Z(t)=\z\in\V$ if $\Z(\cdot)$ is a weak solution and  
		\begin{align*}
			\Z(s)&=\z+\int_t^s  \left[-\mu\mathcal{A}\Z(\tau)-\mathfrak{B}(\Z(\tau))-\alpha\Z(\tau)-
			\beta\mathfrak{C}(\Z(\tau))+\f(\tau)+\u(\tau)\right]\d\tau,
		\end{align*} 
		 is satisfied as an equality in $\H$ for all  $s\in[t,T]$.
%		and satisfies $$\sup_{s\in[t,T]}\|\Z(s)\|_{\V}^2+\int_t^T \|\mathcal{A}\Z(s)\|_{\H}^2\d s+\int_0^T\|\Z(s)\|_{\wi\L^{p(r+1)}}^{r+1}\d s<+\infty,$$  where $p\in[2,\infty)$ for $d=2$ and $p=3$ for $d=3$.
	%	Moreover, \eqref{stapPMP} is satisfied as an equality in $\H$ for a.e. $s\in[t,T]$.
	\end{definition}
    For a given time interval $s\in[t, T]$, a control $\u(\cdot)$, and an initial datum $\z$, we denote by $\Z(\cdot;t,\z,\u)$, the (weak or strong) solution of \eqref{stapPMP} . For brevity, we simply write $\Z(\cdot)$ when no ambiguity arises. 
    We now state the following well-posedness results for weak and strong solutions of the system \eqref{stapPMP}, which are adopted from \cite[Theorem 3.5 and Theorem 4.2]{SMTM}.
     \begin{proposition}\label{weLLp}
      Let $t\in[0,T]$ and $\u\in\mathrm{L}^2(t,T;\H)$. Assume that $r\geq1$ in $d=2$, and $r>3$ and $r=3$ with $2\beta\mu\geq1$ in $d=3$. Then, for a given initial data $\z\in\H$ and forcing $\f\in\mathrm{L}^2(t,T;\V^{*})$, there exists a \emph{unique weak solution} $\Z(\cdot)$ of the system \eqref{stapPMP}, in the sense of Definition \ref{def-var-strong}, satisfying the following energy estimate: 
      	\begin{align}\label{eqncont1}
      	&\sup\limits_{s\in[t,T]}\|\Z(s)\|_{\H}^2
      	+\int_t^{T}\|\Z(\zeta)\|_{\V}^2\d\zeta+
      	\int_t^{T}\|\Z(\zeta)\|_{\widetilde{\L}^{r+1}}^{r+1}\d\zeta
      	\nonumber\\&\leq C(\mu,\beta,T)
      	\left(\|\z\|_{\H}^2+\int_t^T\|\f(\zeta)\|_{\V^{*}}^2\d\zeta+
      	\int_t^T\|\u(\zeta))\|_{\H}^2\d \zeta\right).
      \end{align}
      Moreover, for $\f\in\mathrm{L}^2(t,T;\H)$ and $\z\in\V$, there exists a \emph{unique strong solution} $\Z(\cdot)$ of the system \eqref{stapPMP} for which the following uniform energy estimate holds:
     	 \begin{align}\label{eqncont2}
     		&\sup\limits_{s\in[t,T]}\|\Z(s)\|_{\V}^2
     		+\int_t^{T}\|(\mathcal{A}+\I)\Z(\zeta)\|_{\H}^2\d\zeta+
     		\int_t^T\|\Z(\zeta)\|_{\wi\L^{r+1}}^{r+1}\d\zeta+
     		\int_t^{T}\||\Z(\zeta)|^{\frac{r-1}{2}}\nabla\Z(\zeta)\|_{\H}^{2}\d\zeta
     		\nonumber\\&\leq C(\mu,\alpha,\beta,T)
     		\left(\|\z\|_{\V}^2+\int_t^T\|\f(\zeta)\|_{\H}^2\d\zeta 
     		+\int_t^T\|\u(\zeta)\|_{\H}^2\d\zeta\right).
     	\end{align}
     \end{proposition}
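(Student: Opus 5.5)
The plan is the standard Faedo--Galerkin scheme in the Fourier basis of $\mathbb{T}^d$, which consists of eigenfunctions of $\mathcal{A}$. The decisive advantage of this basis on the torus is that the projection $\mathcal{P}_n$ commutes with $\mathcal{A}$, so testing the Galerkin system with $\mathcal{A}\Z_n$ is legitimate.

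\emph{Finite-dimensional problem and weak estimate.} Let $\Z_n$ solve
\[
\frac{\d\Z_n}{\d s}=\mathcal{P}_n\big[-\mu\mathcal{A}\Z_n-\mathfrak{B}(\Z_n)-\alpha\Z_n-\beta\mathfrak{C}(\Z_n)+\f+\u\big],
\qquad \Z_n(t)=\mathcal{P}_n\z .
\]
This is a locally Lipschitz ODE, so it has a local solution, and the a priori bounds below make it global. Testing with $\Z_n$ and using \eqref{syymB} together with $\langle\mathfrak{C}(\Z_n),\Z_n\rangle=\|\Z_n\|_{\wi\L^{r+1}}^{r+1}$ gives
\[
\tfrac12\tfrac{\d}{\d s}\|\Z_n\|_{\H}^2+\mu\|\nabla\Z_n\|_{\H}^2+\alpha\|\Z_n\|_{\H}^2+\beta\|\Z_n\|_{\wi\L^{r+1}}^{r+1}=\langle\f,\Z_n\rangle+(\u,\Z_n).
\]
Because there is no Poincar\'e inequality, I write $\mu\|\nabla\Z_n\|_{\H}^2+\alpha\|\Z_n\|_{\H}^2\ge\min\{\mu,\alpha\}\|\Z_n\|_{\V}^2$ and absorb the forcing terms by Young's inequality. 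Gronwall then yields \eqref{eqncont1} uniformly in $n$.

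\emph{Strong estimate.} Test with $(\mathcal{A}+\I)\Z_n$. The absorption term is handled by \eqref{torusequ}, which supplies the good term $\beta\||\Z_n|^{\frac{r-1}{2}}\nabla\Z_n\|_{\H}^2$; the convective term is handled by \eqref{syymB3}. Together these give
\[
\tfrac{\d}{\d s}\|\Z_n\|_{\V}^2+\mu\|\mathcal{A}\Z_n\|_{\H}^2+\beta\||\Z_n|^{\frac{r-1}{2}}\nabla\Z_n\|_{\H}^2
\le C\big(\|\Z_n\|_{\V}^2+\|\f\|_{\H}^2+\|\u\|_{\H}^2\big)
\]
for $r>3$, with $\varrho$ as in \eqref{eqn-varrho}. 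The main obstacle is the critical case $r=3$. There I would instead use $|b(\Z_n,\Z_n,\mathcal{A}\Z_n)|\le \|\,|\Z_n|\nabla\Z_n\|_{\H}\|\mathcal{A}\Z_n\|_{\H}\le \theta\mu\|\mathcal{A}\Z_n\|_{\H}^2+\frac{1}{4\theta\mu}\||\Z_n|\nabla\Z_n\|_{\H}^2$. Choosing $\theta$ close to $1$, this is absorbed precisely when $\beta\ge\frac{1}{2\mu}$, i.e.\ when $2\beta\mu\ge1$; at equality the borderline term must be handled carefully, since the two competing quantities cancel exactly. In $d=2$ the convective term can alternatively be controlled for every $r\ge1$ by Ladyzhenskaya's and Agmon's inequalities. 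Gronwall then gives \eqref{eqncont2} uniformly in $n$. A bound on $\frac{\d\Z_n}{\d s}$ in $\mathrm{L}^2(t,T;\V^{*})+\mathrm{L}^{\frac{r+1}{r}}(t,T;\wi\L^{\frac{r+1}{r}})$ (respectively in $\mathrm{L}^2(t,T;\H)$ in the strong case) follows from the equation.

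\emph{Passage to the limit and uniqueness.} Using Banach--Alaoglu and the Aubin--Lions lemma, I extract a subsequence with $\Z_n\to\Z$ strongly in $\mathrm{L}^2(t,T;\H)$. I pass to the limit in $\mathfrak{B}$ by strong--weak convergence. For $\mathfrak{C}$ I use a Minty-type argument based on the monotonicity \eqref{monoC2}, or alternatively a.e.\ convergence combined with the uniform $\mathrm{L}^{\frac{r+1}{r}}$ bound. Continuity in time, $\Z\in\mathrm{C}([t,T];\H)$ (respectively $\mathrm{C}([t,T];\V)$), together with the energy equality follows from the Lions--Magenes lemma. The estimates \eqref{eqncont1}--\eqref{eqncont2} pass to the limit by weak lower semicontinuity. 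For uniqueness, set $\boldsymbol{w}=\Z_1-\Z_2$. For $r>3$ I combine \eqref{3.4} with \eqref{monoC2}, so that $\frac{\d}{\d s}\|\boldsymbol{w}\|_{\H}^2\le 2\varrho\|\boldsymbol{w}\|_{\H}^2$. For $r=3$, again using $2\beta\mu\ge1$, I bound $|b(\boldsymbol{w},\boldsymbol{w},\Z_2)|\le\|\boldsymbol{w}\|\,\||\Z_2|\boldsymbol{w}\|$-type terms against the monotone part. Gronwall then gives $\boldsymbol{w}\equiv\boldsymbol{0}$.
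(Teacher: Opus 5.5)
The paper gives no proof of this proposition; it quotes it from \cite[Theorems 3.5 and 4.2]{SMTM}. Your Faedo--Galerkin plan is essentially the standard route to such results: Fourier basis, testing with $\Z_n$ and $(\mathcal{A}+\I)\Z_n$ via \eqref{torusequ}, compactness or monotonicity to identify $\mathfrak{C}$, and uniqueness from \eqref{3.4} and \eqref{monoC2}. The overall approach is right.

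There is one real hole, in uniqueness for $d=2$. The proposition claims uniqueness there for every $r\geq1$, with no condition on $\beta\mu$. Your two arguments cover only $r>3$, where \eqref{3.4} is available, and $r=3$ with $2\beta\mu\geq1$. The cases $1\leq r<3$, and $r=3$ with $2\beta\mu<1$, need the Navier--Stokes-type argument instead. Since $\langle\mathfrak{B}(\Z_1)-\mathfrak{B}(\Z_2),\boldsymbol{w}\rangle=b(\boldsymbol{w},\Z_2,\boldsymbol{w})$, the two-dimensional Ladyzhenskaya inequality gives
\begin{align*}
|b(\boldsymbol{w},\Z_2,\boldsymbol{w})|&\leq\|\boldsymbol{w}\|_{\wi\L^4}^2\|\nabla\Z_2\|_{\H}
\leq C\|\boldsymbol{w}\|_{\H}\|\boldsymbol{w}\|_{\V}\|\nabla\Z_2\|_{\H}\\
&\leq\frac{\min\{\mu,\alpha\}}{2}\|\boldsymbol{w}\|_{\V}^2+C\|\nabla\Z_2\|_{\H}^2\|\boldsymbol{w}\|_{\H}^2.
\end{align*}
The absorption difference is nonnegative by \eqref{monoC2} and can be dropped. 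Gr\"onwall's inequality, using $\Z_2\in\mathrm{L}^2(t,T;\V)$, then gives $\boldsymbol{w}\equiv\boldsymbol{0}$.

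There are also two smaller corrections.

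\textbf{Critical case $r=3$.} Your absorption requirement is $4\theta\beta\mu\geq1$. So $2\beta\mu\geq1$ corresponds to $\theta=\frac12$, while $\theta$ close to $1$ needs only $4\beta\mu>1$. Consequently there is no borderline difficulty at $2\beta\mu=1$. Any $\theta\in(\frac12,1)$ leaves positive multiples of both $\|\mathcal{A}\Z_n\|_{\H}^2$ and $\||\Z_n|\nabla\Z_n\|_{\H}^2$, and the latter also supplies the last term on the left of \eqref{eqncont2}. In your $r=3$ uniqueness bound, the factor should be $\|\nabla\boldsymbol{w}\|_{\H}$, not $\|\boldsymbol{w}\|$.

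\textbf{Derivative bound in the strong case.} The claimed $\mathrm{L}^2(t,T;\H)$ bound on $\frac{\d\Z_n}{\d s}$ requires $\int_t^T\|\Z_n(s)\|_{\wi\L^{2r}}^{2r}\d s$ to be bounded. Interpolating between $\mathrm{L}^\infty(t,T;\wi\L^6)$ and $\mathrm{L}^{r+1}(t,T;\wi\L^{3(r+1)})$ (the latter from the $\|\nabla|\Z_n|^{\frac{r+1}{2}}\|_{\H}$ term in \eqref{torusequ}) gives this only for $r\leq5$ when $d=3$. That covers Table~\ref{Table1}. For $r>5$ in three dimensions, this step, and with it $\Z\in\C([t,T];\V)$ via Lions--Magenes, needs a separate argument.
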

     The forcing term $\f$ does not play any essential role in the analysis, we assume without loss of generality that $\f\equiv\boldsymbol{0}$. Hence, the control $\u$ represents the only forcing in the system \eqref{stapPMP}.
     The following result provides a key estimate, namely a supremum bound for the difference of two solutions in the $\V$-norm, and it will be essential in the derivation of the Pontryagin maximum principle. Moreover, we also establish the continuous dependence estimate for the solution of the system \eqref{stapPMP} in negative Sobolev norms, which will be useful to study the continuity properties of the value function.
     \begin{proposition}\label{supVnorm}
      Let $\Z_1(\cdot)$ and $\Z_2(\cdot)$ be two strong solutions of the system \eqref{stapPMP} such that $\Z_1(t)=\z_1\in\V$ and $\Z_2(t)=\z_2\in\V$. Then, for the values $r$ given in Table \ref{Table1}, the following bound holds:
       \begin{align}\label{supVnormest}
      	\sup\limits_{s\in[t,T]}\|(\Z_1-\Z_2)(s)\|_{\V}^2+\int_t^T
      	\|(\mathcal{A}+\I)(\Z_1-\Z_2)(\zeta)\|_{\H}^2\,\d \zeta
      	 \leq
      	C\|\z_1-\z_2\|_{\V}^2.
      \end{align}
      Moreover, the following continuous dependence estimate for the solution of the system \eqref{stapPMP} in negative Sobolev norm holds:
      \begin{align}\label{z12negsob}
      	\sup\limits_{s\in[t,T]}
      	\|(\mathcal{A}+\I)^{-\frac12}(\Z_1-\Z_2)(s)\|_{\H}^2+
      	\int_{t}^T \|(\Z_1-\Z_2)(\zeta)\|_{\H}^2\d\zeta\leq 
      	C\|(\mathcal{A}+\I)^{-\frac12}(\z_1-\z_2)\|_{\H}^2,
      \end{align}
       where  $C=C\big(\mu,\alpha,\beta,T,\|\z_1\|_{\V},\|\z_2\|_{\V},\|\u\|_{\mathrm{L}^2(t,T;\H)}\big)$.
     \end{proposition}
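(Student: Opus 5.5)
Set $\boldsymbol{w}:=\Z_1-\Z_2$. Both solutions carry the same control $\u$ (and $\f\equiv\boldsymbol{0}$), so the forcing cancels and $\boldsymbol{w}$ satisfies
\begin{align*}
\frac{\d\boldsymbol{w}}{\d s}+\mu\mathcal{A}\boldsymbol{w}+\alpha\boldsymbol{w}+\big[\mathfrak{B}(\Z_1)-\mathfrak{B}(\Z_2)\big]+\beta\big[\mathfrak{C}(\Z_1)-\mathfrak{C}(\Z_2)\big]=\boldsymbol{0},\qquad \boldsymbol{w}(t)=\z_1-\z_2 .
\end{align*}
For \eqref{supVnormest} I will test this equation with $(\mathcal{A}+\I)\boldsymbol{w}$, which is admissible for strong solutions. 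This gives
\begin{align*}
\frac12\frac{\d}{\d s}\|\boldsymbol{w}\|_{\V}^2+\mu\|\mathcal{A}\boldsymbol{w}\|_{\H}^2+\mu\|\nabla\boldsymbol{w}\|_{\H}^2+\alpha\|\boldsymbol{w}\|_{\V}^2
=-\big(\mathfrak{B}(\Z_1)-\mathfrak{B}(\Z_2),(\mathcal{A}+\I)\boldsymbol{w}\big)-\beta\big(\mathfrak{C}(\Z_1)-\mathfrak{C}(\Z_2),(\mathcal{A}+\I)\boldsymbol{w}\big).
\end{align*}
Since $\mu\|\mathcal{A}\boldsymbol{w}\|^2_{\H}+\mu\|\boldsymbol{w}\|_{\H}^2\geq\frac{\mu}{2}\|(\mathcal{A}+\I)\boldsymbol{w}\|_{\H}^2$, the left side controls $\|(\mathcal{A}+\I)\boldsymbol{w}\|_{\H}^2$ once an $\|\boldsymbol{w}\|_{\H}^2$ term is absorbed into the Gronwall factor.

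\emph{Convective term.} Write $\mathfrak{B}(\Z_1)-\mathfrak{B}(\Z_2)=\mathfrak{B}(\boldsymbol{w},\Z_1)+\mathfrak{B}(\Z_2,\boldsymbol{w})$. In $d=3$, Hölder gives
\begin{align*}
|(\mathfrak{B}(\boldsymbol{w},\Z_1),\mathcal{A}\boldsymbol{w})|\leq\|\boldsymbol{w}\|_{\wi\L^{6}}\|\nabla\Z_1\|_{\wi\L^3}\|\mathcal{A}\boldsymbol{w}\|_{\H}
\leq C\|\boldsymbol{w}\|_{\V}\|\Z_1\|_{\V}^{1/2}\|\Z_1\|_{\H^2_{\mathrm{p}}}^{1/2}\|\mathcal{A}\boldsymbol{w}\|_{\H}.
\end{align*}
By Young this is at most $\frac{\mu}{8}\|\mathcal{A}\boldsymbol{w}\|_{\H}^2+C\|\Z_1\|_{\V}\|\Z_1\|_{\H^2_\mathrm{p}}\|\boldsymbol{w}\|_{\V}^2$. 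The second piece uses Agmon: $\|\Z_2\|_{\wi\L^\infty}\|\nabla\boldsymbol{w}\|_{\H}\|\mathcal{A}\boldsymbol{w}\|_{\H}$, with $\|\Z_2\|^2_{\wi\L^\infty}\leq C\|\Z_2\|_{\V}\|\Z_2\|_{\H^2_{\mathrm{p}}}$. In both cases the coefficient of $\|\boldsymbol{w}\|_{\V}^2$ lies in $\mathrm{L}^1(t,T)$ by \eqref{eqncont2}. The case $d=2$ is easier.

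\emph{Absorption term.} This is the main obstacle. I plan to avoid monotonicity and bound it crudely using
\begin{align*}
|\mathfrak{C}(\Z_1)-\mathfrak{C}(\Z_2)|\leq C(|\Z_1|^{r-1}+|\Z_2|^{r-1})|\boldsymbol{w}|,
\end{align*}
so that the pairing is at most $C\big(\|\Z_1\|_{\wi\L^\infty}^{r-1}+\|\Z_2\|_{\wi\L^\infty}^{r-1}\big)\|\boldsymbol{w}\|_{\H}\|(\mathcal{A}+\I)\boldsymbol{w}\|_{\H}$. After Young, we need $\int_t^T\|\Z_i\|_{\wi\L^\infty}^{2(r-1)}\d s<\infty$. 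With Agmon in $d=3$, $\|\Z\|_{\wi\L^\infty}^{2(r-1)}\leq C\|\Z\|_{\V}^{r-1}\|\Z\|_{\H^2_\mathrm{p}}^{r-1}$, which is integrable only for $r\leq3$. So for $r\in(3,5]$ I will instead use Hölder with $\|\Z_i\|_{\wi\L^{3(r-1)}}^{r-1}\|\boldsymbol{w}\|_{\wi\L^6}\|\mathcal{A}\boldsymbol{w}\|_{\H}$. Here $\|\Z\|^{2(r-1)}_{\wi\L^{3(r-1)}}$ should be integrable by interpolating between $\mathrm{L}^\infty(t,T;\V)\subset \mathrm{L}^\infty(\wi\L^6)$ and $\mathrm{L}^2(t,T;\D(\mathcal{A}))$, or by using $\mathrm{L}^{r+1}(t,T;\wi\L^{3(r+1)})$ from the strong-solution class ($|\Z|^{\frac{r-1}{2}}\nabla\Z\in \mathrm{L}^2$ gives $|\Z|^{\frac{r+1}{2}}\in\mathrm{L}^2(\V)$). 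Checking that these exponents close for $r\leq5$ is the delicate bookkeeping step. With $\varrho$-type absorption as in Remark \ref{BLrem} as a fallback, Gronwall then gives \eqref{supVnormest}, with the constant depending on the bounds in \eqref{eqncont2}.

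\emph{Negative-norm estimate \eqref{z12negsob}.} Test with $(\mathcal{A}+\I)^{-1}\boldsymbol{w}$, using that $(\mathcal{A}\boldsymbol{w},(\mathcal{A}+\I)^{-1}\boldsymbol{w})=\|\boldsymbol{w}\|_{\H}^2-\|(\mathcal{A}+\I)^{-1/2}\boldsymbol{w}\|_{\H}^2$. This yields
\begin{align*}
\frac12\frac{\d}{\d s}\|(\mathcal{A}+\I)^{-\frac12}\boldsymbol{w}\|_{\H}^2+\mu\|\boldsymbol{w}\|_{\H}^2\leq \mu\|(\mathcal{A}+\I)^{-\frac12}\boldsymbol{w}\|_{\H}^2+|\text{nonlinear pairings}|.
\end{align*}
For the convective pairing, integrate by parts onto $\boldsymbol{\psi}=(\mathcal{A}+\I)^{-1}\boldsymbol{w}\in\D(\mathcal{A})$:
\begin{align*}
|b(\boldsymbol{w},\Z_1,\boldsymbol{\psi})|+|b(\Z_2,\boldsymbol{w},\boldsymbol{\psi})|\leq \|\boldsymbol{w}\|_{\H}\big(\|\nabla\Z_1\|_{\wi\L^3}\|\boldsymbol{\psi}\|_{\wi\L^6}+\|\Z_2\|_{\wi\L^\infty}\|\nabla\boldsymbol{\psi}\|_{\H}\big).
\end{align*}
Here $\|\boldsymbol{\psi}\|_{\V}\leq\|(\mathcal{A}+\I)^{-1/2}\boldsymbol{w}\|_{\H}$, and Young leaves an $\mathrm{L}^1$-in-time coefficient times $\|(\mathcal{A}+\I)^{-1/2}\boldsymbol{w}\|_{\H}^2$.

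For the absorption pairing, use the same pointwise bound:
\begin{align*}
\beta C\|(|\Z_1|+|\Z_2|)^{r-1}\|_{\wi\L^3}\|\boldsymbol{w}\|_{\H}\|\boldsymbol{\psi}\|_{\wi\L^6}.
\end{align*}
Here $\|\Z\|_{\wi\L^{3(r-1)}}^{2(r-1)}$ must again be integrable. By Sobolev, $\V\hookrightarrow\wi\L^{3(r-1)}$ needs $3(r-1)\leq6$, i.e. $r\leq3$. For $r\in(3,5]$ I will interpolate using $\D(\mathcal{A})\hookrightarrow\wi\L^\infty$, exactly as in the $\V$-estimate. Gronwall then gives \eqref{z12negsob}, and the $\mathrm{L}^2(\H)$ term comes from the retained $\mu\|\boldsymbol{w}\|_{\H}^2$.
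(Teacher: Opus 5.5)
Your proposal is correct and follows essentially the paper's route. Both arguments test the difference equation with $(\mathcal{A}+\I)\boldsymbol{w}$ and with $(\mathcal{A}+\I)^{-1}\boldsymbol{w}$, bound $\mathfrak{C}(\Z_1)-\mathfrak{C}(\Z_2)$ pointwise by $r(|\Z_1|+|\Z_2|)^{r-1}|\boldsymbol{w}|$ with the $\wi\L^{3(r-1)}\times\wi\L^{6}\times\H$ H\"older split, and close by Gronwall; the paper secures $\int_t^T\|\Z_i\|_{\wi\L^{3(r-1)}}^{2(r-1)}\d s<\infty$ exactly by your second option (interpolating between $\wi\L^{r+1}$ and $\wi\L^{3(r+1)}$ and using $\Z\in\mathrm{L}^{r+1}(t,T;\wi\L^{3(r+1)})$), while the bookkeeping you left open also closes via your first option, since $\|\Z\|_{\wi\L^{3(r-1)}}^{2(r-1)}\leq\|\Z\|_{\wi\L^{6}}^{4}\|\Z\|_{\wi\L^{\infty}}^{2r-6}\leq C\|\Z\|_{\V}^{r+1}\|\Z\|_{\H^2_{\mathrm{p}}}^{r-3}$ is integrable precisely for $r\leq 5$ (using the $\V$--$\H^2_{\mathrm{p}}$ form of Agmon's inequality), and for $d=2$ with $r>3$ the same H\"older split is immediate because $\V\hookrightarrow\wi\L^{p}$ for every $p<\infty$.
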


    \begin{proof}
    Let us set $\mathfrak{Z}(\cdot):=\Z_1(\cdot)-\Z_2(\cdot)$. Then, from \eqref{stapPMP}, we infer that  $\mathfrak{Z}(\cdot)$ satisfies the following system:
    \begin{equation}\label{redstapdff1}
    	\left\{
    	\begin{aligned}
    		\frac{\d\mathfrak{Z}(s)}{\d s}+\mu\mathcal{A}\mathfrak{Z}(s)+ \mathfrak{B}(\Z_1(s))&-\mathfrak{B}(\Z_2(s))+
    		\alpha\mathfrak{Z}(s)\\+\beta\mathfrak{C}(\Z_1(s))-\beta\mathfrak{C}(\Z_2(s))&= \boldsymbol{0},  \  \text{ for a.e. } \ s\in[t,T],\\
    		\mathfrak{Z}(t)&=\z_1-\z_2\in\V.
    	\end{aligned}
    	\right.
    \end{equation}
    The rest of the proof is divided into the following two steps: 
    \vskip 2mm
    \noindent
    \textbf{Step I: Proof of \eqref{supVnormest}.} By taking the inner product of the first equation of \eqref{redstapdff1} with $(\mathcal{A}+\I)\mathfrak{Z}$ and using the absolute continuity of $t\mapsto\|\mathfrak{Z}(t)\|_{\V}^2$, we obtain
    \begin{align}\label{unqA}
    	&\frac12\frac{\d}{\d s}\|\mathfrak{Z}\|_{\V}^2+
    	\mu\|(\mathcal{A}+\I)\mathfrak{Z}\|_{\H}^2+\alpha\|\mathfrak{Z}\|_{\V}^2
    	\nonumber\\&=
    	\mu\|\mathfrak{Z}\|_{\V}^2
    	-\big(\mathfrak{B}(\Z_1)-\mathfrak{B}(\Z_2),
    	(\mathcal{A}+\I)\mathfrak{Z}\big)-\beta \big(\mathfrak{C}(\Z_1)-\mathfrak{C}(\Z_2),
    	(\mathcal{A}+\I)\mathfrak{Z}\big).
    \end{align}
    Let us first estimate the term containing bilinear operator in the right hand side of \eqref{unqA} . By an application of the Cauchy--Schwarz and Agmon inequalities, we calculate
    \begin{align*}
    	&\big(\mathfrak{B}(\Z_1)-\mathfrak{B}(\Z_2),
    	(\mathcal{A}+\I)\mathfrak{Z}\big)
    	\nonumber\\&=
    	\big(\mathfrak{B}(\mathfrak{Z},\Z_1),(\mathcal{A}+\I)\mathfrak{Z}\big)+
    	\big(\mathfrak{B}(\Z_2,\mathfrak{Z}),(\mathcal{A}+\I)\mathfrak{Z}\big)
    	\nonumber\\&\leq
    	\|\mathfrak{B}(\mathfrak{Z},\Z_1)\|_{\H}\|(\mathcal{A}+\I)\mathfrak{Z}\|_{\H}+
    	\|\mathfrak{B}(\Z_2,\mathfrak{Z})\|_{\H}\|(\mathcal{A}+\I)\mathfrak{Z}\|_{\H}
    	\nonumber\\&\leq
    	C\|\mathfrak{Z}\|_{\H}^{1-\frac{d}{4}}
    	\|(\mathcal{A}+\I)\mathfrak{Z}\|_{\H}^{1+\frac{d}{4}}
    	\|\nabla\Z_1\|_{\H}+C
    	\|\Z_2\|_{\H}^{1-\frac{d}{4}}\|(\mathcal{A}+\I)\Z_2\|_{\H}^{\frac{d}{4}}
    	\|\nabla\mathfrak{Z}\|_{\H}\|(\mathcal{A}+\I)\mathfrak{Z}\|_{\H}.
    \end{align*}
    On integrating the above inequality  from $t$ to $T$, and then applying H\"older's and Young's inequalities, we find
    \begin{align}\label{unqA1}
    	&\int_t^T \big(\mathfrak{B}(\Z_1(\zeta))-\mathfrak{B}(\Z_2(\zeta)),
    	(\mathcal{A}+\I)\mathfrak{Z}(\zeta)\big)\,\d \zeta
    	\nonumber\\&\leq
    	C\|\Z_1\|_{\mathrm{L}^{\infty}(t,T;\V)}
    	\|\mathfrak{Z}\|_{\mathrm{L}^2(t,T;\V)}^{1-\frac{d}{4}}
    	\|(\mathcal{A}+\I)\mathfrak{Z}\|_{\mathrm{L}^2(t,T;\H)}^{1+\frac{d}{4}}
    	\nonumber\\&\quad+
    	C\|\Z_2\|_{\mathrm{L}^{\infty}(t,T;\H)}^{1-\frac{d}{4}}
    	\left(\int_t^T \|(\mathcal{A}+\I)\Z_2(\zeta)\|_{\H}^{\frac{d}{2}}\|\mathfrak{Z}(\zeta)\|_{\V}^2\,
    	\d \zeta \right)^{\frac12}
    	\|(\mathcal{A}+\I)\mathfrak{Z}\|_{\mathrm{L}^2(t,T;\H)}
    	\nonumber\\&\leq
    	C\|\Z_1\|_{\mathrm{L}^{\infty}(t,T;\V)}^{\frac{8}{4-d}}
    	\|\mathfrak{Z}\|_{\mathrm{L}^2(t,T;\V)}^2+
    	\frac{\mu}{4}\|(\mathcal{A}+\I)\mathfrak{Z}\|_{\mathrm{L}^2(t,T;\H)}^2
    	\nonumber\\&\quad+
    	C\|\Z_2\|_{\mathrm{L}^{\infty}(t,T;\H)}^{\frac{4-d}{2}}
    	\int_t^T \|(\mathcal{A}+\I)\Z_2(\zeta)\|_{\H}^{\frac{d}{2}}
    	\|\mathfrak{Z}(\zeta)\|_{\V}^2\,\d \zeta.
    \end{align}
%    where
%    \begin{align*}
%    C_1=\left(\frac{4+d}{\mu}\right)^{\frac{4+d}{4-d}}\left(\frac{4-d}{8}\right)
%    C_a^{\frac{8}{4-d}} \ \text{ and } \ C_2=\frac{2C_a^2}{\mu}.
%    \end{align*}
   An application of Taylor's formula yields
    \begin{align}\label{Ctayest}
    \mathfrak{C}(\Z_1)-\mathfrak{C}(\Z_2)&=
    	\int_0^1\mathfrak{C}'(\theta\Z_1+(1-\theta)\Z_2)\mathfrak{Z}\,\d\theta
    	\nonumber\\&=
    	\int_0^1 |\theta\Z_1+(1-\theta)\Z_2|^{r-1}\mathfrak{Z}\,\d\theta
    	\nonumber\\&\quad+
    	(r-1)\int_0^1 (\theta\Z_1+(1-\theta)\Z_2)|\theta\Z_1+(1-\theta)\Z_2|^{r-3}
    	(\theta\Z_1+(1-\theta)\Z_2)\cdot\mathfrak{Z}\,\d\theta
    	\nonumber\\&\leq
    	r(|\Z_1|+|\Z_2|)^{r-1}|\mathfrak{Z}|.
    \end{align}
    By using \eqref{Ctayest} and making the use of the Cauchy--Schwarz and H\"older inequalities, we obtain
    \begin{align*}
    	\big(\mathfrak{C}(\Z_1)-\mathfrak{C}(\Z_2),
    	(\mathcal{A}+\I)\mathfrak{Z}\big)
    	&\leq 
    	r\||\Z_1|+|\Z_2|\|_{\wi\L^{3(r-1)}}^{r-1}\|\mathfrak{Z}\|_{\wi\L^{6}}
    	\|(\mathcal{A}+\I)\mathfrak{Z}\|_{\H}
    	\nonumber\\&\leq
    	rC\||\Z_1|+|\Z_2|\|_{\wi\L^{3(r-1)}}^{r-1}\|\mathfrak{Z}\|_{\V}
    	\|(\mathcal{A}+\I)\mathfrak{Z}\|_{\H}.
    \end{align*}
   On integrating above from $t$ to $T$, and then applying H\"older's and Young's inequalities, we derive 
    \begin{align}\label{unqA2}
    	&\beta\int_t^T \big(\mathfrak{C}(\Z_1(\zeta))-\mathfrak{C}(\Z_2(\zeta)),
    	(\mathcal{A}+\I)\mathfrak{Z}(\zeta)\big)\,\d \zeta
    	\nonumber\\&\leq
    	rC\beta\left(\int_t^T\||\Z_1(\zeta)|+|\Z_2(\zeta)|\|_{\wi\L^{3(r-1)}}^{2(r-1)}
    	\|\mathfrak{Z}(\zeta)\|_{\V}^2\,\d \zeta\right)^{\frac12}
    	\|(\mathcal{A}+\I)\mathfrak{Z}\|_{\mathrm{L}^2(t,T;\H)}
    	\nonumber\\&\leq
    	C\int_t^T\||\Z_1(\zeta)|+|\Z_2(\zeta)|\|_{\wi\L^{3(r-1)}}^{2(r-1)}
    	\|\mathfrak{Z}(\zeta)\|_{\V}^2\,\d \zeta+
    	\frac{\mu}{4}\|(\mathcal{A}+\I)\mathfrak{Z}\|_{\mathrm{L}^2(t,T;\H)}^2.
    \end{align}
 On integrating \eqref{unqA} from $t$ to $s$ and utilizing the estimates \eqref{unqA1} and \eqref{unqA2}, and rearranging the terms, we deduce the following:
    \begin{align*}
    &\|\mathfrak{Z}(s)\|_{\V}^2+\mu\int_t^s
    \|(\mathcal{A}+\I)\mathfrak{Z}(\zeta)\|_{\H}^2\,\d \zeta+2\alpha\int_t^s \|\mathfrak{Z}(\zeta)\|_{\V}^2\,\d \zeta
    \nonumber\\&\leq
    \|\z_1-\z_2\|_{\V}^2+\int_t^s \bigg[2\mu+C\|\Z_1\|_{\mathrm{L}^{\infty}(t,T;\V)}^{\frac{8}{4-d}}
    +C\|\Z_2\|_{\mathrm{L}^{\infty}(t,T;\H)}^{\frac{4-d}{2}}
    \|(\mathcal{A}+\I)\Z_2(\zeta)\|_{\H}^{\frac{d}{2}}
    \nonumber\\&\hspace{4cm}+
    C\||\Z_1(\zeta)|+|\Z_2(\zeta)|\|_{\wi\L^{3(r-1)}}^{2(r-1)}
    \bigg]\|\mathfrak{Z}(\zeta)\|_{\V}^2\,\d \zeta,
    \end{align*}
    for all $s\in[t,T]$.  By an application of Gr\"onwall's inequality, we obtain
    \begin{align}\label{unqA3}
    	&\|\mathfrak{Z}(s)\|_{\V}^2+\mu\int_t^s
    	\|(\mathcal{A}+\I)\mathfrak{Z}(\zeta)\|_{\H}^2\,\d \zeta+2\alpha\int_t^s \|\mathfrak{Z}(\zeta)\|_{\V}^2\,\d \zeta
    	\nonumber\\&\leq
    	\|\z_1-\z_2\|_{\V}^2\,\exp\bigg(\int_t^s \bigg[2\mu+C\|\Z_1\|_{\mathrm{L}^{\infty}(t,T;\V)}^{\frac{8}{4-d}}
    	+C\|\Z_2\|_{\mathrm{L}^{\infty}(t,T;\H)}^{\frac{4-d}{2}}
    	\|(\mathcal{A}+\I)\Z_2(\zeta)\|_{\H}^{\frac{d}{2}}
    	\nonumber\\&\hspace{4cm}+
    	C\||\Z_1(\zeta)|+|\Z_2(\zeta)|\|_{\wi\L^{3(r-1)}}^{2(r-1)}
    	\bigg]\,\d \zeta\bigg),
    \end{align}
    for all $s\in[t,T]$. Now, we need to justify the integral quantity appearing in the exponential term in \eqref{unqA3}. Note that, by H\"older' inequality, the following bound follows immediately:
    \begin{align}\label{unqA7}
    	\int_t^T \|(\mathcal{A}+\I)\Z_2(\zeta)\|_{\H}^{\frac{d}{2}}\,\d\zeta
    	\leq T^{\frac{4-d}{4}} \|(\mathcal{A}+\I)\mathfrak{Z}\|_{\mathrm{L}^2(t,T;\H)}^\frac{d}{2}.
    \end{align}
    Therefore, we only need to justify the term 
    $\int_t^T \||\Z_1(\zeta)|+|\Z_2(\zeta)|\|_{\wi\L^{3(r-1)}}^{2(r-1)}\,\d\zeta$. We consider the following three cases:
    \vskip 2mm
    \noindent
    \textbf{Case I:} \emph{For $r\geq1$ in $d=2$.} In view of the Sobolev embedding $\V\hookrightarrow\wi\L^{r+1}$ for any $r\geq1$, the following bound is immediate:
    \begin{align*}
    \int_t^T \||\Z_1(\zeta)|+|\Z_2(\zeta)|\|_{\wi\L^{3(r-1)}}^{2(r-1)}\,\d\zeta
    \leq 2^{2r-3}C \big[\|\Z_1\|_{\mathrm{L}^{\infty}(t,T;\V)}^{2(r-1)}+
    \|\Z_2\|_{\mathrm{L}^{\infty}(t,T;\V)}^{2(r-1)}\big]T.
    \end{align*} 
    \vskip 2mm
    \noindent
    \textbf{Case II:} \emph{For $r\in(3,5)$ and $r=3$ with $2\beta\mu\geq1$ in $d=3$.}
    For $r\in(3,5)$, by an application of the interpolation and H\"older inequalities, we estimate 
    \begin{align}
    	&\int_t^T \||\Z_1(\zeta)|+|\Z_2(\zeta)|\|_{\wi\L^{3(r-1)}}^{2(r-1)}\,\d\zeta
    	\nonumber\\&\leq
    	\int_t^T  \||\Z_1(\zeta)|+|\Z_2(\zeta)|\|_{\wi\L^{r+1}}^{2} \||\Z_1(\zeta)|+|\Z_2(\zeta)|\|_{\wi\L^{3(r+1)}}^{2(r-2)}\,\d\zeta
    	\label{unqA4}\\&\leq
    	\left(\int_t^T  \||\Z_1(\zeta)|+|\Z_2(\zeta)|\|_{\wi\L^{r+1}}^{\frac{2(r+1)}{5-r}}
    	\,\d\zeta\right)^{\frac{5-r}{r+1}} \left(\int_t^T\||\Z_1(\zeta)|+|\Z_2(\zeta)|\|_{\wi\L^{3(r+1)}}^{r+1}
    	\,\d\zeta\right)^{\frac{2(r-2)}{r+1}}
    	\nonumber\\&\leq 2^{\frac{3(r-1)}{r+1}}
    	\left(\int_t^T  \|\Z_1(\zeta)\|_{\wi\L^{r+1}}^{\frac{2(r+1)}{5-r}}
    	\,\d\zeta+
    	\int_t^T  \|\Z_2(\zeta)\|_{\wi\L^{r+1}}^{\frac{2(r+1)}{5-r}}
    	\,\d\zeta\right)^{\frac{5-r}{r+1}}
    	\nonumber\\&\quad\times \left(\int_t^T\||\Z_1(\zeta)|+|\Z_2(\zeta)|\|_{\wi\L^{3(r+1)}}^{r+1}
    	\,\d\zeta\right)^{\frac{2(r-2)}{r+1}}
    	\nonumber\\&\leq 2^{\frac{3(r-1)}{r+1}} C
    	\left(\int_t^T  \|\Z_1(\zeta)\|_{\V}^{\frac{2(r+1)}{5-r}}
    	\,\d\zeta+
    	\int_t^T  \|\Z_2(\zeta)\|_{\V}^{\frac{2(r+1)}{5-r}}
    	\,\d\zeta\right)^{\frac{5-r}{r+1}}
    	\nonumber\\&\qquad\times \left(\int_t^T\||\Z_1(\zeta)|+|\Z_2(\zeta)|\|_{\wi\L^{3(r+1)}}^{r+1}
    	\,\d\zeta\right)^{\frac{2(r-2)}{r+1}}
    	\nonumber\\&\leq 4 C T^{\frac{5-r}{r+1}}
    	\big[\|\Z_1\|_{\mathrm{L}^{\infty}(t,T;\V)}^2+
    	\|\Z_2\|_{\mathrm{L}^{\infty}(t,T;\V)}^2\big] \left(\int_t^T\||\Z_1(\zeta)|+|\Z_2(\zeta)|\|_{\wi\L^{3(r+1)}}^{r+1}
    	\,\d\zeta\right)^{\frac{2(r-2)}{r+1}}.\label{unqA5}
    \end{align}
    \vskip 2mm
    \noindent
    \textbf{Case III:} \emph{For $r=5$ in $d=3$.} From \eqref{unqA4}, we conclude 
    \begin{align}\label{unqA6}
    	&\int_t^T \||\Z_1(\zeta)|+|\Z_2(\zeta)|\|_{\wi\L^{12}}^{8}\,\d\zeta
    	\nonumber\\&\leq
    	\int_t^T  \||\Z_1(\zeta)|+|\Z_2(\zeta)|\|_{\wi\L^{6}}^{2} \||\Z_1(\zeta)|+|\Z_2(\zeta)|\|_{\wi\L^{18}}^{6}\,\d\zeta
    	\nonumber\\&\leq
    	2C
    	\big[\|\Z_1\|_{\mathrm{L}^{\infty}(t,T;\V)}^2+\|\Z_2\|_{\mathrm{L}^{\infty}(t,T;\V)}^2\big]\left(\int_t^T \||\Z_1(\zeta)|+|\Z_2(\zeta)|\|_{\wi\L^{18}}^{6}\,\d\zeta\right).
    \end{align} 
    Combining \eqref{unqA6}--\eqref{unqA7} with \eqref{eqncont2} and substituting the resulting estimates into \eqref{unqA3}, we obtain \eqref{supVnormest}.
    \vskip 2mm
    \noindent
    \textbf{Step II: Proof of \eqref{z12negsob}.}
   On taking the inner product with $(\mathcal{A}+\I)^{-1}\mathfrak{Z}$ in the first equation of \eqref{redstapdff1}, we obtain
   \begin{align}\label{z2z3}
   	&\frac12\frac{\d}{\d s}\|(\mathcal{A}+\I)^{-\frac12}\mathfrak{Z}\|_{\H}^2+
   	\mu\|\mathfrak{Z}\|_{\H}^2+
   	\alpha\|(\mathcal{A}+\I)^{-\frac12}\mathfrak{Z}\|_{\H}^2
   	\nonumber\\&=
   	\mu\|(\mathcal{A}+\I)^{-\frac12}\mathfrak{Z}\|_{\H}^2
   	-\underbrace{\big(\mathfrak{B}(\Z_1)-\mathfrak{B}(\Z_2),
   		(\mathcal{A}+\I)^{-1}\mathfrak{Z}\big)}_{:=\mathcal{T}_1}
   	\nonumber\\&\quad-
   	\beta\underbrace{\big(\mathfrak{C}(\Z_1)-\mathfrak{C}(\Z_2),
   		(\mathcal{A}+\I)^{-1}\mathfrak{Z}\big)}_{:=\mathcal{T}_2}.
   \end{align}
   We now estimate $\mathcal{T}_1$ and $\mathcal{T}_2$, separately. By the properties of bilinear operator along with H\"older's and Young's inequalities, we estimate $\mathcal{T}_1$ as
   \begin{align}\label{bz12}
   	\big|\mathcal{T}_1\big|&\leq
   	\big|\big(\mathfrak{B}(\Z_1,\mathfrak{Z}),(\mathcal{A}+\I)^{-1}\mathfrak{Z}
   	\big)\big|+
   	\big|\big(\mathfrak{B}(\mathfrak{Z},\Z_2),(\mathcal{A}+\I)^{-1}\mathfrak{Z}
   	\big)\big|
   	\nonumber\\&\leq
   	\|\mathfrak{Z}\|_{\H}\|(\mathcal{A}+\I)^{-1}\mathfrak{Z}\|_{\wi\L^4}
   	\big[\|\nabla\Z_1\|_{\wi\L^4}+\|\nabla\Z_2\|_{\wi\L^4}\big]
   	\nonumber\\&\leq
   	\frac{\mu}{4}\|\mathfrak{Z}\|_{\H}^2+C
   	\big[\|\nabla\Z_1\|_{\wi\L^4}^2+\|\nabla\Z_2\|_{\wi\L^4}^2\big]
   	\|(\mathcal{A}+\I)^{-\frac12}\mathfrak{Z}\|_{\H}^2.
   \end{align}
    From \eqref{Ctayest}, together with the Sobolev embedding  $\V\hookrightarrow \widetilde{\mathbb{L}}^{r+1}$ for all values of $r$  considered in Cases I and II of Table \ref{Table1}, and by applying Young's  inequality, we estimate $\mathcal{T}_2$ as
    \begin{align}\label{cz12}
   	\big|\mathcal{T}_2\big|
   	&\leq
   		rC\||\Z_1|+|\Z_2|\|_{\wi\L^{2(r+1)}}^{r-1}\|\mathfrak{Z}\|_{\H}
   	\|(\mathcal{A}+\I)^{-\frac12}\mathfrak{Z}\|_{\H}
   	\nonumber\\&\leq
   	\frac{\mu}{4}\|\mathfrak{Z}\|_{\H}^2+
   	C\||\Z_1|+|\Z_2|\|_{\wi\L^{2(r+1)}}^{2(r-1)}
   	\|(\mathcal{A}+\I)^{-\frac12}\mathfrak{Z}\|_{\H}^2.
   \end{align}
   On substituting \eqref{bz12}-\eqref{cz12} into \eqref{z2z3}, we obtain
   \begin{align*}
   	&\frac12\frac{\d}{\d s}\|(\mathcal{A}+\I)^{-\frac12}\mathfrak{Z}\|_{\H}^2+
   \frac{\mu}{2}\|\mathfrak{Z}\|_{\H}^2+
   	\alpha\|(\mathcal{A}+\I)^{-\frac12}\mathfrak{Z}\|_{\H}^2
   	\nonumber\\&\leq
   	\mu\|(\mathcal{A}+\I)^{-\frac12}\mathfrak{Z}\|_{\H}^2+
   C
   \big[\|\nabla\Z_1\|_{\wi\L^4}^2+\|\nabla\Z_2\|_{\wi\L^4}^2\big]
   \|(\mathcal{A}+\I)^{-\frac12}\mathfrak{Z}\|_{\H}^2
   	\nonumber\\&\quad+
   	C\||\Z_1|+|\Z_2|\|_{\wi\L^{2(r+1)}}^{2(r-1)}
   	\|(\mathcal{A}+\I)^{-\frac12}\mathfrak{Z}\|_{\H}^2.
   \end{align*}
   On integrating above from $t$ to $s$ and then employing Gr\"onwall's inequality we arrive at 
   {\small
   	\begin{align}\label{z12negsob12}
   		&\|(\mathcal{A}+\I)^{-\frac12}\mathfrak{Z}(s)\|_{\H}^2+
   		\mu\int_{t}^s\|\mathfrak{Z}(\zeta)\|_{\H}^2\,\d\zeta+
   		2\alpha\int_{t}^s\|(\mathcal{A}+\I)^{-\frac12}\mathfrak{Z}(\zeta)\|_{\H}^2\,
   		\d\zeta
   		\nonumber\\&\leq
   		\|(\mathcal{A}+\I)^{-\frac12}(\z_1-\z_2)\|_{\H}^2
   		\nonumber\\&\quad\times
   		\exp\bigg(2\mu T+C\int_{t}^s
   		\big[\|\nabla\Z_1(\zeta)\|_{\wi\L^4}^2+\|\nabla\Z_2(\zeta)\|_{\wi\L^4}^2\big]
   		\d\zeta
   		+
   		C\int_{t}^s
   		\big[\||\Z_1(\zeta)|+|\Z_2(\zeta)|\|_{\wi\L^{2(r+1)}}^{2(r-1)}\big]
   		\d\zeta\bigg),
   \end{align}}
for all $s\in[t,T].$ The integral terms appearing in the exponent on the right-hand side of \eqref{z12negsob12} are uniformly bounded in view of \eqref{eqncont2}. Hence, using \eqref{supVnormest}, we finally deduce \eqref{z12negsob}.
    \end{proof}

The following proposition establishes higher order energy estimates in the $\D(\mathcal{A})-$norm and additional regularity properties for the strong solution of the system \eqref{stapPMP} .
    \begin{proposition}\label{extregu}
    Assume that $\z\in\D(\mathcal{A})$ and $\u\in\mathrm{L}^2(t,T;\H)$. Let $\Z(\cdot)$ be the unique strong solution of the system \eqref{stapPMP} with $\Z(t)=\z$.  
    Then, for $r$ in Table \ref{Table1} the following uniform energy estimate holds: 
    \begin{align}\label{extreg1}
    	\|\mathcal{A}\Z(s)\|_{\H}^2+
    	\int_t^{s}\|\mathcal{A}^{\frac32}\Z(\zeta)\|_{\H}^2\d\zeta\leq 
    	C\left(\mu,\beta,T,\|\z\|_{\D(\mathcal{A})},
    	\|\u\|_{\mathrm{L}^2(t,T;\H)}\right),
    \end{align}
    for all $s\in[t, T]$. Moreover, if $\u\in\C([t,T];\H)$, 
    then
    \begin{align}\label{extreg2}
    \Z\in\C([t,T];\D(\mathcal{A})) \ \text{ and } \ \frac{\d\Z}{\d s}\in\C([t,T];\H).
    \end{align} 
    \end{proposition}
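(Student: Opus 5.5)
The plan is to derive \eqref{extreg1} by a higher-order energy estimate obtained by testing \eqref{stapPMP} (with $\f\equiv\boldsymbol{0}$) with $\mathcal{A}^2\Z$. I would do this first on Galerkin approximations $\Z_n$ built from the eigenfunctions of $\mathcal{A}$ on $\mathbb{T}^d$, so that every pairing is legitimate, and then pass to the limit by weak lower semicontinuity. Since $\mathcal{A}$ commutes with the Galerkin projections on the torus, this gives
\begin{align*}
\frac12\frac{\d}{\d s}\|\mathcal{A}\Z\|_{\H}^2+\mu\|\mathcal{A}^{\frac32}\Z\|_{\H}^2+\alpha\|\mathcal{A}\Z\|_{\H}^2
=-\big(\mathcal{A}^{\frac12}\mathfrak{B}(\Z),\mathcal{A}^{\frac32}\Z\big)-\beta\big(\mathcal{A}^{\frac12}\mathfrak{C}(\Z),\mathcal{A}^{\frac32}\Z\big)+(\u,\mathcal{A}^2\Z).
\end{align*}
Throughout, the lower-order quantities in \eqref{eqncont2} are treated as already controlled: $\sup_s\|\Z\|_{\V}$, $\int\|(\mathcal{A}+\I)\Z\|_{\H}^2$, and $\int\||\Z|^{\frac{r-1}{2}}\nabla\Z\|_{\H}^2$.

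\emph{Convective term.} Write $\mathcal{A}^{\frac12}\mathfrak{B}(\Z)\simeq\nabla\big((\Z\cdot\nabla)\Z\big)=(\nabla\Z\cdot\nabla)\Z+(\Z\cdot\nabla)\nabla\Z$ and bound it in $\H$. For the first piece use $\|\nabla\Z\|_{\wi\L^4}^2\le C\|\nabla\Z\|_{\H}^{1-\frac d4}\|\mathcal{A}\Z\|_{\H}^{1+\frac d4}$. For the second use Agmon's inequality $\|\Z\|_{\wi\L^\infty}\le C\|\Z\|_{\H}^{1-\frac d4}\|\Z\|_{\H^2_{\mathrm p}}^{\frac d4}$ together with $\|\mathcal{A}\Z\|_{\H}$. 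Young's inequality then absorbs $\frac{\mu}{4}\|\mathcal{A}^{\frac32}\Z\|_{\H}^2$ and leaves a term $g_1(s)\|\mathcal{A}\Z\|_{\H}^2$ with $g_1=C(1+\|(\mathcal{A}+\I)\Z\|_{\H}^2)\in\mathrm{L}^1(t,T)$ by \eqref{eqncont2}. The case $d=3$ needs $\|\Z\|_{\V}\in\mathrm{L}^\infty$ and $\|\mathcal{A}\Z\|_{\H}\in\mathrm{L}^2$, both available.

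\emph{Absorption term.} Since $|\nabla(|\Z|^{r-1}\Z)|\le r|\Z|^{r-1}|\nabla\Z|$, we have $\|\mathcal{A}^{\frac12}\mathfrak{C}(\Z)\|_{\H}\le r\|\Z\|_{\wi\L^{3(r-1)}}^{r-1}\|\nabla\Z\|_{\wi\L^6}$, so
\begin{align*}
\beta|(\mathcal{A}^{\frac12}\mathfrak{C}(\Z),\mathcal{A}^{\frac32}\Z)|\le\frac{\mu}{4}\|\mathcal{A}^{\frac32}\Z\|_{\H}^2+C\|\Z\|_{\wi\L^{3(r-1)}}^{2(r-1)}\|\mathcal{A}\Z\|_{\H}^2 .
\end{align*}
The coefficient $\|\Z\|_{\wi\L^{3(r-1)}}^{2(r-1)}$ is integrable in time by exactly Cases I--III in the proof of Proposition \ref{supVnorm}: \eqref{unqA5} and \eqref{unqA6} with $\Z_2=\boldsymbol{0}$, which is where the restriction on $r$ from Table \ref{Table1} enters.

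\emph{Control term.} This is the step I expect to be the genuine obstacle. With only $\u\in\mathrm{L}^2(t,T;\H)$, the pairing $(\u,\mathcal{A}^2\Z)$ cannot be absorbed by $\mu\|\mathcal{A}^{\frac32}\Z\|_{\H}^2$. My first attempt would be to rewrite $\mathcal{A}^2\Z$ through the equation, or to integrate by parts in time. Both routes return a term $(\u,\mathcal{A}\u)$, i.e.\ they again demand $\u\in\mathrm{L}^2(t,T;\V)$.

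I should also flag that the linear Stokes case $\Z'+\mu\mathcal{A}\Z=\u$ already shows that $\mathrm{L}^2(\H)$ forcing yields only $\Z\in\mathrm{L}^2(t,T;\D(\mathcal{A}))\cap\mathrm{H}^1(t,T;\H)$ by maximal regularity. Thus \eqref{extreg1} cannot be closed from this hypothesis alone, and I would expect the authors' argument to use extra smoothness of $\u$ implicitly at this point. If that term is granted, Gr\"onwall's inequality with the integrable weights $g_1$ and $\|\Z\|_{\wi\L^{3(r-1)}}^{2(r-1)}$ yields \eqref{extreg1}.

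For \eqref{extreg2}, with $\u\in\C([t,T];\H)$, the plan is as follows. The bound \eqref{extreg1} gives $\Z\in\mathrm{L}^\infty(t,T;\D(\mathcal{A}))\cap\mathrm{L}^2(t,T;\D(\mathcal{A}^{\frac32}))$, and from the equation $\frac{\d\Z}{\d s}\in\mathrm{L}^2(t,T;\D(\mathcal{A}^{\frac12}))$. The Lions--Magenes lemma then gives $\Z\in\C([t,T];\D(\mathcal{A}))$. Next, $\mathcal{A}\Z$, $\mathfrak{B}(\Z)$ and $\mathfrak{C}(\Z)$ are continuous into $\H$, using the algebra and embedding properties of $\H^2_{\mathrm p}$ for $d\le3$. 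Together with continuity of $\u$, the equation itself then shows $\frac{\d\Z}{\d s}\in\C([t,T];\H)$.
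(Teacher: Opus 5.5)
Your treatment of the convective and absorption terms is sound, and you are right that the control term cannot be closed. This is a genuine gap, but it lies in the statement, not in your method: under the hypothesis $\u\in\mathrm{L}^2(t,T;\H)$ the bound \eqref{extreg1} is false, even for time-independent controls in $\mathscr{U}$. Take $\u\equiv\u_0$ with $\u_0\in\H\setminus\V$, for instance a shear field $(g(x_2),0)$ on $\mathbb{T}^2$ with $g$ a periodic step function, scaled so that $\|\u_0\|_{\H}\le\mathtt{R}$.

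Suppose \eqref{extreg1} held. Then $\Z\in\mathrm{L}^\infty(t,T;\D(\mathcal{A}))$, so $\mathfrak{B}(\Z)+\alpha\Z+\beta\mathfrak{C}(\Z)\in\mathrm{L}^\infty(t,T;\V)$. Let $\Z_1$ be the Stokes evolution driven by $-(\mathfrak{B}(\Z)+\alpha\Z+\beta\mathfrak{C}(\Z))$ with datum $\z\in\D(\mathcal{A})$; testing with $\mathcal{A}^2\Z_1$ shows $\Z_1\in\mathrm{L}^2(t,T;\D(\mathcal{A}^{3/2}))$. Hence $\Z_2:=\Z-\Z_1$, which solves $\Z_2'+\mu\mathcal{A}\Z_2=\u_0$ with $\Z_2(t)=\boldsymbol{0}$, would also lie in $\mathrm{L}^2(t,T;\D(\mathcal{A}^{3/2}))$. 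But mode by mode, the coefficients of $\mathcal{A}^{3/2}\Z_2(s)$ are $\mu^{-1}\lambda_k^{1/2}(1-e^{-\mu\lambda_k(s-t)})\widehat{\u_0}(k)$. Since $1-e^{-\mu\lambda_k(s-t)}\ge1-e^{-\mu\lambda_1(s-t)}>0$ and $\u_0\notin\V$, these are not square-summable for any $s>t$.

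The missing hypothesis is spatial regularity of the control, e.g. $\u\in\mathrm{L}^2(t,T;\V)$. With it, $|(\u,\mathcal{A}^2\Z)|\le\frac{\mu}{4}\|\mathcal{A}^{3/2}\Z\|_{\H}^2+\frac{1}{\mu}\|\u\|_{\V}^2$, and your Gr\"onwall argument closes. The paper does not carry out this estimate itself (it defers \eqref{extreg1} to earlier work), so it does not resolve the point either. The extra hypothesis is harmless where the proposition is used: in Theorem \ref{PMPpr}, $\wi\u=\upsigma(\p)$ with $\p\in\mathrm{L}^2(t,T;\V)$, and $\|\upsigma(\q)\|_{\V}\le\|\q\|_{\V}$.

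The same hidden assumption sits in your proof of \eqref{extreg2}, and there you did not flag it. With $\u$ only $\H$-valued, the equation gives $\frac{\d\Z}{\d s}\in\mathrm{L}^2(t,T;\H)$, not $\mathrm{L}^2(t,T;\D(\mathcal{A}^{1/2}))$. The space $\mathrm{L}^2(t,T;\D(\mathcal{A}^{3/2}))\cap\mathrm{H}^1(t,T;\H)$ embeds only into $\C([t,T];\D(\mathcal{A}^{3/4}))$. The paper takes exactly this step with $\frac{\d\Z}{\d s}\in\mathrm{L}^2(t,T;\H)$, so it has the same defect.

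Continuity of $\u$ in $\H$ does not rescue the argument. By Baillon's theorem there is no maximal regularity in $\C([t,T];\H)$ for the unbounded Stokes operator. So some $\u\in\C([t,T];\H)$ produce Stokes solutions outside $\C([t,T];\D(\mathcal{A}))$. The subtraction argument above, now with the nonlinear terms in $\C([t,T];\V)$, transfers this failure to the CBF system.

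With $\u\in\mathrm{L}^2(t,T;\V)\cap\C([t,T];\H)$, by contrast, your route goes through verbatim. First, $\frac{\d\Z}{\d s}\in\mathrm{L}^2(t,T;\V)$. Lions--Magenes in the triple $\D(\mathcal{A}^{3/2})\subset\D(\mathcal{A})\subset\D(\mathcal{A}^{1/2})$ then gives $\Z\in\C([t,T];\D(\mathcal{A}))$. Finally, the equation gives $\frac{\d\Z}{\d s}\in\C([t,T];\H)$.
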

    \begin{proof}
    The first part \eqref{extreg1} has been established in \cite[Appendix B]{smtm2}. We will address only the second part. In view of \eqref{extreg1} and the fact that $\Z(\cdot)$ is a strong solution, we have
    \begin{align*}
    	\Z\in\mathrm{L}^2(t,T;\D(\mathcal{A}^{\frac32})) \ \text{ and  } \ 
    	\ \frac{\d\Z}{\d s}\in\mathrm{L}^2(t,T;\H).
    \end{align*}
    Since, the embedding 
    $\D(\mathcal{A}^{\frac32})\hookrightarrow\D(\mathcal{A})
    \hookrightarrow\H$. Therefore, from \cite[Lemma 1.2, Chapter III]{Te}, we conclude  
    \begin{align}\label{zc0tda}
    	\Z\in\C([t,T];\D(\mathcal{A})).
    \end{align}
    Let us now prove the continuity of the time derivative. By using Agmon's inequality, we compute
    \begin{align}\label{bc0t}
    &\|\mathfrak{B}(\Z(s_1))-\mathfrak{B}(\Z(s_2))\|_{\H}
    \nonumber\\&=
    \|\mathfrak{B}(\Z(s_1)-\Z(s_2),\Z(s_2))\|_{\H}+
    \|\mathfrak{B}(\Z(s_1),\Z(s_1)-\Z(s_2))\|_{\H}
    \nonumber\\&\leq
    C\|\Z(s_1)-\Z(s_2)\|_{\H}^{1-\frac{d}{4}}
    \|(\mathcal{A}+\I)(\Z(s_1)-\Z(s_2))\|_{\H}^{\frac{d}{4}}
    \|\nabla\Z(s_2)\|_{\H}^{\frac{d}{4}}
    \nonumber\\&\quad+
     C\|\Z(s_1)\|_{\H}^{1-\frac{d}{4}}
    \|(\mathcal{A}+\I)\Z(s_1)\|_{\H}^{\frac{d}{4}}
    \|\nabla(\Z(s_1)-\Z(s_2))\|_{\H}^{\frac{d}{4}},
    \end{align}
    for all $s_1,s_2\in[t,T]$. Moreover, from \eqref{Ctayest}, and using the Sobolev embeddings $\D(\mathcal{A})\hookrightarrow\wi\L^{r+1}$ for $r\geq1$ and $\V\hookrightarrow\wi\L^6$, we find
    \begin{align}\label{cc0t}
  	&\|\mathfrak{C}(\Z(s_1))-\mathfrak{C}(\Z(s_2))\|_{\H}
  	\nonumber\\&\leq r\||\Z(s_1)|+|\Z(s_2)|\|_{\wi\L^{3(r-1)}}^{r-1}\|\Z(s_1)-\Z(s_2)\|_{\wi\L^{6}}
  	\nonumber\\&\leq rC\||\Z(s_1)|+|\Z(s_2)|\|_{\D(\mathcal{A})}^{r-1}\|\Z(s_1)-\Z(s_2)\|_{\V},
  \end{align}
  for all $s_1,s_2\in[t,T]$.
  From \eqref{zc0tda} and \eqref{extreg1}, it follows from \eqref{bc0t}-\eqref{cc0t} that 
  \begin{align}\label{bcc0t}
  	\mathfrak{B}(\Z(\cdot))\in\C([t,T];\H)  \ \text{ and } \
  		\mathfrak{C}(\Z(\cdot))\in\C([t,T];\H).
  \end{align} 
 Consequently, for $\u\in\C([t,T];\H)$, it follows from \eqref{stapPMP}, together with \eqref{zc0tda}, \eqref{bcc0t}, that $\frac{\d\Z}{\d s}\in\C([t,T];\H).$
    \end{proof}
    
    \section{The linearized and adjoint systems for the CBF equations}\label{LADJsys} \setcounter{equation}{0}
    In this section, we introduce the linearized and adjoint systems associated with the CBF system \eqref{stapPMP} and recall the corresponding well posedness results related to \eqref{stapPMP} (see \cite{Op1} for 2D CBF equations with $r=1,2,3$). These results play an important role in the derivation of the Pontryagin maximum principle and the verification type results (Section \ref{Pontryagin}).
    
    \subsection{The linearized system}\label{sysLN} 
    Let  $\z\in\V$ and $\Z(\cdot)$ be a strong solution to the system \eqref{stapPMP}.
    Then, we consider the following linearized system associated with the system \eqref{stapPMP}  in $\V^{*}+\wi\L^{\frac{r+1}{r}}$:
    \begin{equation}\label{stapLNR}
    	\left\{
    	\begin{aligned}
    		\frac{\d\Y(s)}{\d s}+ \mu\mathcal{A}\Y(s)+\mathfrak{B}'(\Z(s))\Y(s)+
    		\alpha\Y(s)+\beta\mathfrak{C}'(\Z(s))\Y(s)&=
    		\mathfrak{f}_1(s),  \\
    		\Y(t)&=\y\in\H,
    	\end{aligned}
    	\right.
    \end{equation}
 for a.e.  $ s\in[t,T]$.  Let us formally derive a priori estimates for \eqref{stapLNR}. Since  $\Z(\cdot)$ is a strong solution to the system \eqref{stapPMP}, the  following calculations are justified. A rigorous proof can be obtained by  employing a standard Faedo--Galerkin approximation method. Taking the inner product with $\Y(\cdot)$ in \eqref{stapLNR} and utilizing \eqref{C1d}, we obtain
    \begin{align}\label{fy}
    	&\frac12\frac{\d}{\d s}\|\Y\|_{\H}^2
    	+\mu\|\nabla\Y\|_{\H}^2+\alpha\|\Y\|_{\H}^2+
    	\beta\||\Z|^{\frac{r-1}{2}}\Y\|_{\H}^2
    	+\beta(r-1)
    	\||\Z|^{\frac{r-3}{2}}(\Y\cdot\Z)\|_{\H}^2
    	\nonumber\\&=-
    	\langle\mathfrak{B}(\Y,\Z),\Y\rangle+\langle\mathfrak{f}_1,\Y\rangle.
    \end{align}
   By using the properties \eqref{syymB} of bilinear operator and applying the Cauchy Schwarz inequality, we find
    \begin{align}\label{Best1}
    	\big|\langle\mathcal{B}(\Y,\Z),\Y\rangle\big|= \big|-\langle\mathcal{B}(\Y,\Y),\Z\rangle\big|
    	\leq
    	\frac{\mu}{4}\|\nabla\Y\|_{\H}^2+
    	\frac{1}{\mu}\|\Y\Z\|_{\H}^2.
    \end{align}
    For $r>3$, by an application of H\"older's and Young's inequalities, we estimate the last term in \eqref{Best1} as follows:
    \begin{align}\label{Best2}
    	\|\Y(s)\Z(s)\|_{\H}^2\leq
    	\frac{\beta\mu}{2}\||\Z(s)|^{\frac{r-1}{2}}\Y(s)\|_{\H}^2
    	+\varrho_1\|\Y(s)\|_{\H}^2,
    \end{align}
    where $\varrho_1:=\frac{r-3}{r-1}\left[\frac{4}{\beta\mu (r-1)}\right]^{\frac{2}{r-3}}.$
    	Moreover, we estimate $\langle\mathfrak{f},\Y\rangle$ as follows:
    \begin{align}\label{fy1}
    	\langle\mathfrak{f},\Y\rangle\leq\|\mathfrak{f}\|_{\V^{*}}\|\Y\|_{\V}
    	&\leq\|\mathfrak{f}\|_{\V^{*}}(\|\Y\|_{\H}+\|\nabla\Y\|_{\H})
    	\nonumber\\&\leq
    	\left(\frac{1}{2\alpha}+\frac{1}{\mu}\right)\|\mathfrak{f}_1\|_{\V^{*}}^2
    	+\frac{\alpha}{2}\|\Y\|_{\H}^2+\frac{\mu}{4}\|\nabla\Y\|_{\H}^2.
    \end{align}
    On substituting \eqref{Best1}-\eqref{fy1} into \eqref{fy}, we obtain
    \begin{align}\label{fy2}
    	&\frac12\frac{\d}{\d s}\|\Y\|_{\H}^2
    	+\frac{\alpha}{2}\|\Y\|_{\H}^2+\frac{\mu}{2}\|\nabla\Y\|_{\H}^2+
    	\frac{\beta}{2}\||\Z|^{\frac{r-1}{2}}\Y\|_{\H}^2
    	+\beta(r-1)
    	\||\Z|^{\frac{r-3}{2}}(\Y\cdot\Z)\|_{\H}^2
    	\nonumber\\&\leq
    	\left(\frac{1}{2\alpha}+\frac{1}{\mu}\right)\|\mathfrak{f}_1\|_{\V^{*}}^2
    	+\frac{\varrho_1}{\mu}\|\Y\|_{\H}^2.
    \end{align} 
   When $r=3$, taking the inner product in \eqref{stapLNR} with $\Y$ and using \eqref{C1d}, we arrive at 
     \begin{align}\label{fy3}
    	&\frac12\frac{\d}{\d s}\|\Y\|_{\H}^2
    	+\mu\|\nabla\Y\|_{\H}^2+\alpha\|\Y\|_{\H}^2+
    	\beta\|\Z\Y\|_{\H}^2+2\beta\|\Z\cdot\Y\|_{\H}^2
    	\nonumber\\&=-
    	\langle\mathfrak{B}(\Y,\Z),\Y\rangle+\langle\mathfrak{f}_1,
    	\Y\rangle.
    \end{align}
    Similar to \eqref{Best1}, we find
    \begin{align}\label{Best11}
    \big|\langle\mathcal{B}(\Y,\Z),\Y\rangle\big|
    \leq
    \frac{\mu}{2}\|\nabla\Y\|_{\H}^2+
    \frac{1}{2\mu}\|\Y\Z\|_{\H}^2.
    \end{align}
    Substituting \eqref{fy1} and \eqref{Best11} into \eqref{fy3}, we obtain
    \begin{align}\label{fy4}
   	&\frac12\frac{\d}{\d s}\|\Y\|_{\H}^2
   	+\frac{\mu}{4}\|\nabla\Y\|_{\H}^2+\frac{\alpha}{2}\|\Y\|_{\H}^2+
   	\left(\beta-\frac{1}{2\mu}\right)\|\Z\Y\|_{\H}^2
   	+2\beta\|\Z\cdot\Y\|_{\H}^2
   	\nonumber\\&\leq
   	\left(\frac{1}{2\alpha}+\frac{1}{\mu}\right)\|\mathfrak{f}_1\|_{\V^{*}}^2.
   \end{align}
    An application of Gr\"onwall's inequality to the estimates  \eqref{fy2} and \eqref{fy4} yields
    \begin{align}\label{fengest}
    	&\|\Y(s)\|_{\H}^2+\mu\int_t^s\|\nabla\Y(\tau)\|_{\H}^2\d\tau+
    	\alpha\int_t^s\|\Y(\tau)\|_{\H}^2
    	\d\tau+\beta\int_t^s \||\Z(\tau)|^{\frac{r-1}{2}}\Y(\tau)\|_{\H}^2\d\tau
    	\nonumber\\&\quad+
    	2\beta(r-1)\int_t^s \||\Z(\tau)|^{\frac{r-3}{2}}(\Y(\tau)\cdot\Z(\tau))\|_{\H}^2\d\tau
    	\nonumber\\&\leq
    	\bigg[\|\y\|_{\H}^2+
    	\left(\frac{1}{2\alpha}+\frac{1}{\mu}\right)\int_t^s \|\mathfrak{f}_1(\tau)\|_{\V^{*}}^2\d\tau\bigg]e^{k s},
    \end{align}
    for all $s\in[t,T]$, where the constant $k$ is defined as
    \begin{equation}\label{value-k}
    k=\left\{
    \begin{aligned}
    	\frac{2\varrho_1}{\mu},  \ &\text{ when } \ r>3 \ \text{ for all } \ \beta,\mu>0,\\
    	0, \ &\text{ when } \ r=3 \ \text{ with } \ 2\beta\mu\geq1.
    \end{aligned}
    \right.
    \end{equation} 
    We now derive a bound on the derivative of the solution $\Y$ of the system \eqref{stapLNR}. 
%    For this, we consider the following two cases:
%    \vskip 0.2cm
%    \noindent
%    \textbf{When $d=3$}. 
We consider the values of $r$ given in Table \ref{Table1}. We first note  from \eqref{fengest} that 
    \begin{align*}
    	\Y(\cdot)\in\mathrm{L}^{\infty}(t,T;\H)\cap\mathrm{L}^2(t,T;\V).
    \end{align*}
    We then calculate
    \begin{align}\label{bdere}
    	\left|\int_t^T\langle\mathfrak{B}'(\Z(s))\Y(s),\v(s)\rangle\d s\right|
    	&\leq\int_t^T\|\Z(s)\|_{\wi\L^4}\|\Y(s)\|_{\wi\L^4}\|\nabla\v(s)\|\d s
    	\nonumber\\&\leq
    	C\|\Z\|_{\mathrm{L}^{\infty}(t,T;\V)}
    	\|\Y\|_{\mathrm{L}^2(t,T;\V)}\|\v\|_{\mathrm{L}^2(t,T;\V)},
    \end{align}
    for all $\v\in\mathrm{L}^2(0,T;\V)$. Moreover, from \eqref{eqncont2} and the Sobolev embedding $\V\hookrightarrow\wi\L^{r+1}\hookrightarrow\wi\L^{\frac{r+1}{r}}\hookrightarrow\V^{*}$ for the values of $r$ given in Table \ref{Table1}, we find
    \begin{align}\label{cdere}
    	\left|\int_t^T\langle\mathfrak{C}'(\Z(s))\Y(s),\v(s)\rangle\d s\right|
    	&\leq
    	\int_t^T\|\mathfrak{C}'(\Z(s))\Y(s)\|_{\V^{*}}\|\v(s)\|_{\V}\d s
    	\nonumber\\&\leq
    	C\int_t^T \|\mathfrak{C}'(\Z(s))\Y(s)\|_{\wi\L^{\frac{r+1}{r}}}\|\v(s)\|_{\V}\d s
    	\nonumber\\&\leq
    	C\int_t^T\|\Z(s)\|_{\wi\L^{r+1}}^{r-1}\|\Y(s)\|_{\wi\L^{r+1}}\|\v(s)\|_{\V}\d s
    	\nonumber\\&\leq
    	C\int_t^T\|\Z(s)\|_{\V}^{r-1}\|\Y(s)\|_{\V}\|\v(s)\|_{\V}\d s
    	\nonumber\\&\leq
    	C\|\Z\|_{\mathrm{L}^{\infty}(t,T;\V)}^{r-1}
    	\|\Y\|_{\mathrm{L}^2(t,T;\V)}\|\v\|_{\mathrm{L}^2(t,T;\V)},
    \end{align}
    for all $\v\in\mathrm{L}^2(0,T;\V)$. Thus, from \eqref{eqncont2}, \eqref{fengest}-\eqref{cdere}, we deduce the following estimate:
    \begin{align}\label{ddere}
    	&\left|\int_t^T\left\langle\frac{\d\Y}{\d s}, \v(s)\right\rangle\d s\right|
    	\nonumber\\&=
    	\left|\int_t^T\langle-\mu\mathcal{A}\Y(s)-\mathfrak{B}'(\Z(s))\Y(s)-\alpha\Y(s)-\beta\mathfrak{C}'(\Z(s))(\Y(s))+\mathfrak{f}(s),\v(s)\rangle\d s\right|
    	\nonumber\\&\leq
    	C\left(\mu,\alpha,\beta,C,\|\y\|_{\H},
    	\|\f\|_{\mathrm{L}^2(t,T;\H)},\|\u\|_{\mathrm{L}^2(t,T;\H)},
    	\|\mathfrak{f}_1\|_{\mathrm{L}^2(t,T;\V^{*})}\right)\|\v\|_{\mathrm{L}^2(t,T;\V)},
    \end{align}
    for all $\v\in\mathrm{L}^2(t,T;\V)$. Thus, $\frac{\d\Y}{\d s} \in\mathrm{L}^2(t,T;\V^{*})$. Consequently, from \cite[Lemma 1.2, Chapter III]{Te}, we infer
    \begin{align*}
    	\Y(\cdot)\in\C([t,T];\H). 
    \end{align*}
%    \vskip 0.2cm
%    \noindent
%    \textbf{When $d=2$}. Note that in this case, in view of the Sobolev embedding $\V\hookrightarrow\wi\L^{r+1}$ for $r\geq1$, \eqref{fengest} yields following:
%    \begin{align*}
%    	\Y(\cdot)\in\mathrm{L}^{\infty}(t,T;\H)\cap\mathrm{L}^2(t,T;\V)
%    	\cap\mathrm{L}^{r+1}(t,T;\wi\L^{r+1}).
%    \end{align*}
%    Then, the bounds \eqref{bdere}-\eqref{cdere} and consequently \eqref{ddere} holds true for all $\v\in\mathrm{L}^2(t,T;\V)\cap\mathrm{L}^{r+1}(t,T;\wi\L^{r+1})$. Thus, we have $\frac{\d\Y}{\d s}\in \mathrm{L}^2(t,T;\V^{*})+\mathrm{L}^{\frac{r+1}{r}}(t,T;\wi\L^{\frac{r+1}{r}})$.
    
    The existence of the weak solution to \eqref{stapLNR} can be established by using the Faedo-Galerkin approximation techniques. Moreover, the uniqueness of weak solutions is immediate from \eqref{fengest} due to the linearity of the system \eqref{stapLNR} and the regularity of $\Z(\cdot)$. We thus arrive at the following result: 
    \begin{theorem}\label{soLvaLin}
Let $\Z(\cdot)$ be the strong solution to the system \eqref{stapPMP}. Let $\y\in\H$ and $\mathfrak{f}_1\in\mathrm{L}^2(t,T;\V^{*})$ be given. Then, there exists a unique weak solution $\Y(\cdot)$ to the system \eqref{stapLNR} satisfying 
\begin{equation*}
\Y(\cdot)\in
\mathrm{L}^{\infty}(t,T;\H)\cap\mathrm{L}^2(t,T;\V) \ \text{ and  }\  
\frac{\d\Y}{\d s}\in
\mathrm{L}^2(t,T;\V^{*}) \ \text{ in } d=2,3.
\end{equation*}
   Moreover, it satisfies the following energy equality:
    	\begin{align}\label{fyeng}
    		&\|\Y(s)\|_{\H}^2
    		+2\mu\int_t^s\|\nabla\Y(\zeta)\|_{\H}^2\d \zeta+2\alpha\int_t^s\|\Y(\zeta)\|_{\H}^2\d \zeta
    		\nonumber\\&\quad+
    		2\beta\int_t^s\||\Z(\zeta)|^{\frac{r-1}{2}}\Y(\zeta)\|_{\H}^2\d \zeta+2\beta(r-1)\int_t^s
    		\||\Z(\zeta)|^{\frac{r-3}{2}}(\Y(\zeta)\cdot\Z(\zeta))\|_{\H}^2\d \zeta
    		\nonumber\\&=\|\y\|_{\H}^2-2\int_t^s
    		\langle\mathfrak{B}(\Y(\zeta),\Z(\zeta)),\Y(\zeta)\rangle\d \zeta+
    		2\int_t^s\langle\mathfrak{f}_1(\zeta),\Y(\zeta)\rangle\d \zeta,
    	\end{align}
    	for all $s\in[t,T]$.
    \end{theorem}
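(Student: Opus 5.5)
The proof will run a Faedo--Galerkin scheme, pass to the limit using the a priori estimates derived just above, and then recover the energy equality from a Lions--Magenes type result. Let $\{\boldsymbol{w}_k\}$ be the eigenfunctions of $\mathcal{A}$ on $\mathbb{T}^d$, which form an orthonormal basis of $\H$ and are smooth. Let $\mathcal{P}_n$ be the orthogonal projection onto their first $n$ elements. I would look for $\Y_n(s)=\sum_{k\le n}c_k^n(s)\boldsymbol{w}_k$ solving
\begin{align*}
\frac{\d\Y_n}{\d s}+\mu\mathcal{A}\Y_n+\mathcal{P}_n\mathfrak{B}'(\Z)\Y_n+\alpha\Y_n+\beta\mathcal{P}_n\mathfrak{C}'(\Z)\Y_n=\mathcal{P}_n\mathfrak{f}_1,\qquad \Y_n(t)=\mathcal{P}_n\y.
\end{align*}
This is a linear ODE system whose coefficients are integrable in time. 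Indeed, $\Z\in\mathrm{L}^\infty(t,T;\V)$, so $\|\Z\|_{\wi\L^{r+1}}$ is bounded for the values of $r$ in Table \ref{Table1}, and the coefficients $\langle\mathfrak{C}'(\Z)\boldsymbol{w}_j,\boldsymbol{w}_k\rangle$ and $\langle\mathfrak{B}'(\Z)\boldsymbol{w}_j,\boldsymbol{w}_k\rangle$ are $\mathrm{L}^1$ in time. Carath\'eodory's theorem therefore gives a unique absolutely continuous solution on $[t,T]$.

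The computations \eqref{fy}--\eqref{fengest} are exact at the Galerkin level, because $\langle\mathcal{P}_n\boldsymbol{g},\Y_n\rangle=\langle\boldsymbol{g},\Y_n\rangle$. I would split into the two cases: \eqref{Best2} for $r>3$, and \eqref{Best11} with $2\beta\mu\ge1$ for $r=3$. In dimension two the same bound holds with $\|\Y\Z\|_{\H}^2\le C\|\Z\|_{\V}^2\|\Y\|_{\H}\|\Y\|_{\V}$ via Ladyzhenskaya, followed by Gr\"onwall. Either way, $\Y_n$ is bounded in $\mathrm{L}^\infty(t,T;\H)\cap\mathrm{L}^2(t,T;\V)$, uniformly in $n$.

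Next, the duality bounds \eqref{bdere}--\eqref{cdere}, together with $\|\mathcal{P}_n\|\le1$ on $\V$, give a uniform bound for $\frac{\d\Y_n}{\d s}$ in $\mathrm{L}^2(t,T;\V^*)$. Passing to a subsequence, I obtain $\Y_n\rightharpoonup\Y$ weak-$*$ in $\mathrm{L}^\infty(t,T;\H)$, weakly in $\mathrm{L}^2(t,T;\V)$, and $\frac{\d\Y_n}{\d s}\rightharpoonup\frac{\d\Y}{\d s}$ weakly in $\mathrm{L}^2(t,T;\V^*)$. Because the equation is linear in $\Y$, and $\Y\mapsto\mathfrak{B}'(\Z)\Y$ and $\Y\mapsto\mathfrak{C}'(\Z)\Y$ are bounded linear maps $\mathrm{L}^2(t,T;\V)\to\mathrm{L}^2(t,T;\V^*)$ by \eqref{bdere}--\eqref{cdere}, weak convergence is enough to pass to the limit. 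No compactness argument is needed. I would test against $\psi(s)\boldsymbol{w}_k$ with $\psi\in\C^1_c$, then identify the initial datum in the usual way using $\Y\in\C([t,T];\V^*)$.

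Uniqueness follows by applying \eqref{fengest} to the difference of two solutions, with $\y=\boldsymbol{0}$ and $\mathfrak{f}_1=\boldsymbol{0}$. For the energy equality \eqref{fyeng}, I would use that $\Y\in\mathrm{L}^2(t,T;\V)$ and $\frac{\d\Y}{\d s}\in\mathrm{L}^2(t,T;\V^*)$. By \cite[Lemma 1.2, Chapter III]{Te}, this gives $\Y\in\C([t,T];\H)$ and $\frac{\d}{\d s}\|\Y\|_{\H}^2=2\langle\frac{\d\Y}{\d s},\Y\rangle$ almost everywhere. Substituting the equation, the pairing with $\mathfrak{C}'(\Z)\Y$ is computed from \eqref{C1d} and produces the two nonnegative absorption integrals. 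The convective term $\langle\mathfrak{B}(\Z,\Y),\Y\rangle$ vanishes by \eqref{syymB}, leaving $\langle\mathfrak{B}(\Y,\Z),\Y\rangle$. Integrating from $t$ to $s$ then gives \eqref{fyeng}.

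The main obstacle is a subtle point at this last step. Pairing $\mathfrak{C}'(\Z)\Y$ with $\Y$ requires $\mathfrak{C}'(\Z)\Y\in\mathrm{L}^2(t,T;\V^*)$. This uses $\V\hookrightarrow\wi\L^{r+1}$, which is exactly why $r\le5$ is needed when $d=3$. I would check it carefully via \eqref{cdere}.
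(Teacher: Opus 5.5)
Your proposal is correct and follows the same route as the paper. It uses the a priori estimates \eqref{fy}--\eqref{fengest}, split into $r>3$ and $r=3$ with $2\beta\mu\geq1$, and the duality bounds \eqref{bdere}--\eqref{cdere}, which give $\frac{\d\Y}{\d s}\in\mathrm{L}^2(t,T;\V^{*})$; it then builds the solution by Faedo--Galerkin, proves uniqueness by linearity, and obtains continuity and the energy equality from the Lions--Magenes lemma. You also supply details the paper leaves implicit, notably the two-dimensional case with $r<3$ (or $r=3$ with $2\beta\mu<1$): the constant $k$ in \eqref{value-k} does not cover it, and your Ladyzhenskaya--Gr\"onwall argument does.
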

    \subsection{The adjoint system}\label{sysADJ}
   To characterize the optimal control, it is essential to derive the adjoint system associated with \eqref{stapPMP}. The optimal control can then be expressed in terms of the corresponding adjoint variable. Let $\p$ denote the adjoint variable satisfying the following adjoint system   in $\V^{*}+\wi\L^{\frac{r+1}{r}}$:
    \begin{equation}\label{stapadj}
    	\left\{
    	\begin{aligned}
    		-\frac{\d\p(s)}{\d s}+ \mu\mathcal{A}\p(s)+\big(\mathfrak{B}'(\Z(s))\big)^{*}\p(s)+
    		\alpha\p(s)+\beta\mathfrak{C}'(\Z(s))\p(s)&=
    		\mathfrak{f}_2(s),  \\
    		\p(T)&=\p_T\in\H,
    	\end{aligned}
    	\right.
    \end{equation}
  for a.e.  $s\in[t,T]$. Here $(\mathfrak{B}'(\cdot))^{*}$ is defines as
    	\begin{align*}
    		\langle(\mathfrak{B}'(\Z))^{*}\p,\q\rangle=
    		\langle\p,\mathfrak{B}'(\Z)\q\rangle=
    		\langle\p,\mathfrak{B}(\Z,\q)\rangle+\langle\p,\mathfrak{B}(\q,\Z)\rangle.
    \end{align*}
    Similar to  Theorem \ref{soLvaLin}, the following result regarding the solvability of the adjoint system \eqref{stapadj} can also be established by using the standard Faedo-Galerkin approximation procedure.
    \begin{theorem}\label{soLvaadj}
    	Let $\Z(\cdot)$ be the strong solution to the system \eqref{stapPMP}. Let $\p_T\in\H$ and $\mathfrak{f}_2\in\mathrm{L}^2(t,T;\V^{*})$ be given. Then, there exists a unique weak solution $\p(\cdot)$ to the adjoint system \eqref{stapadj} satisfying 
    	\begin{equation*}
    	\p(\cdot)\in
    	\mathrm{L}^{\infty}(t,T;\H)\cap\mathrm{L}^2(t,T;\V) \ \text{ and }\  
    	\frac{\d\p}{\d s}\in\mathrm{L}^2(t,T;\V^{*}). 
    \end{equation*}
    	Moreover, the following energy equality holds:
    	\begin{align}\label{fyengadj}
    		&\|\p(s)\|_{\H}^2
    		+2\mu\int_s^T\|\nabla\p(\zeta)\|_{\H}^2\d \zeta+2\alpha\int_s^T\|\p(\zeta)\|_{\H}^2\d \zeta
    		\nonumber\\&\quad+
    		2\beta\int_s^T\||\Z(\zeta)|^{\frac{r-1}{2}}\p(\zeta)\|_{\H}^2\d \zeta+2\beta(r-1)\int_s^T
    		\||\Z(\zeta)|^{\frac{r-3}{2}}(\p(\zeta)\cdot\Z(\zeta))\|_{\H}^2\d \zeta
    		\nonumber\\&=\|\p_T\|_{\H}^2-2\int_s^T
    		\langle\mathfrak{B}(\p(\zeta),\Z(\zeta)),\p(\zeta)\rangle\d \zeta+
    		2\int_s^T\langle\mathfrak{f}_2(\zeta),\p(\zeta)\rangle\d \zeta,
    	\end{align}
    	for all $s\in[t,T]$.
    \end{theorem}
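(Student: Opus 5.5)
The plan is to follow the proof of Theorem \ref{soLvaLin}, after the time reversal $\widetilde{\p}(s):=\p(T+t-s)$. This turns \eqref{stapadj} into a forward linear problem on $[t,T]$, with initial datum $\p_T$, coefficient $\widetilde{\Z}(s):=\Z(T+t-s)$ and forcing $\widetilde{\mathfrak{f}}_2(s):=\mathfrak{f}_2(T+t-s)$. The only structural change from \eqref{stapLNR} is that $\mathfrak{B}'(\Z)$ is replaced by its adjoint. The term $\mathfrak{C}'(\Z)$ is already self-adjoint, by the symmetric form \eqref{C1d}.

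\textbf{Step 1: a priori estimates.} I would first derive these formally and then justify them on Faedo--Galerkin approximations $\p_n$, projected onto the span of the first $n$ eigenfunctions of $\mathcal{A}$. Pairing \eqref{stapadj} with $\p$ gives
\[
\langle(\mathfrak{B}'(\Z))^{*}\p,\p\rangle=\langle \p,\mathfrak{B}(\Z,\p)\rangle+\langle\p,\mathfrak{B}(\p,\Z)\rangle=\langle\mathfrak{B}(\p,\Z),\p\rangle,
\]
since the first term vanishes by \eqref{syymB}. This is exactly the term appearing in \eqref{fyengadj}. From \eqref{C1d},
\[
\langle\mathfrak{C}'(\Z)\p,\p\rangle=\||\Z|^{\frac{r-1}{2}}\p\|_{\H}^2+(r-1)\||\Z|^{\frac{r-3}{2}}(\p\cdot\Z)\|_{\H}^2 .
\]
The remaining term is bounded as $|\langle\mathfrak{B}(\p,\Z),\p\rangle|=|\langle\mathfrak{B}(\p,\p),\Z\rangle|\le\frac{\mu}{4}\|\nabla\p\|_{\H}^2+\frac1\mu\|\p\Z\|_{\H}^2$, exactly as in \eqref{Best1}.

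\textbf{Step 2: closing the estimate by cases.} For $r>3$ I would absorb $\|\p\Z\|_{\H}^2$ into $\frac{\beta}{2}\||\Z|^{\frac{r-1}{2}}\p\|_{\H}^2+\varrho_1\|\p\|_{\H}^2$ via \eqref{Best2}. For $r=3$ with $2\beta\mu\ge1$ I would use \eqref{Best11} instead. In $d=2$ with $r\le 3$ I would instead use Ladyzhenskaya's inequality, $\|\p\|_{\wi\L^4}^2\|\nabla\Z\|_{\H}\le C\|\p\|_{\H}\|\p\|_{\V}\|\Z\|_{\V}$, together with $\Z\in\mathrm{L}^\infty(t,T;\V)$ from \eqref{eqncont2}. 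In every case the forcing is handled by \eqref{fy1}, and Gr\"onwall's inequality backward in time gives the analogue of \eqref{fengest}, uniformly in $n$. The bound on $\frac{\d\p}{\d s}$ in $\mathrm{L}^2(t,T;\V^{*})$ then follows as in \eqref{bdere}--\eqref{ddere}. The extra adjoint piece $\langle\p,\mathfrak{B}(\Z,\v)\rangle$ is controlled by $\|\Z\|_{\wi\L^4}\|\p\|_{\wi\L^4}\|\nabla\v\|_{\H}$, and $\mathfrak{C}'(\Z)\p$ is bounded in $\V^{*}$ by \eqref{cdere} through $\V\hookrightarrow\wi\L^{r+1}$, valid for the values of $r$ in Table \ref{Table1}.

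\textbf{Step 3: passage to the limit, uniqueness and energy equality.} Weak-star compactness together with the Aubin--Lions lemma lets me pass to the limit in the Galerkin scheme. The limit passage in $\mathfrak{C}'(\Z)\p_n$ is straightforward because the equation is linear in $\p$ and $\Z$ is fixed and strong. Uniqueness follows from the estimate of Step 2 applied to the difference of two solutions, which solves the problem with zero data. Finally, since $\p\in\mathrm{L}^2(t,T;\V)$ and $\frac{\d\p}{\d s}\in\mathrm{L}^2(t,T;\V^{*})$, \cite[Lemma 1.2, Chapter III]{Te} gives $\p\in\C([t,T];\H)$ and the identity $\frac{\d}{\d s}\|\p\|_{\H}^2=2\langle\frac{\d\p}{\d s},\p\rangle$. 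Integrating this identity from $s$ to $T$ and inserting the computations of Step 1 yields \eqref{fyengadj}.

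\textbf{Main obstacle.} The delicate point is making sure that $\mathfrak{C}'(\Z)\p\in\mathrm{L}^2(t,T;\V^{*})$ for merely $\p\in\mathrm{L}^2(t,T;\V)$, so that the Lions--Magenes pairing is legitimate without an extra $\wi\L^{r+1}$ term. This is where the range of $r$ in Table \ref{Table1} and $\Z\in\mathrm{L}^\infty(t,T;\V)$ are used. A secondary point is the critical case $r=3$, where closing the estimate uses the condition $2\beta\mu\ge1$.
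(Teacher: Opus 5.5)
Your proposal is correct and follows essentially the route the paper indicates. The paper only refers back to the a priori estimates of Theorem \ref{soLvaLin}: pairing with the solution, using \eqref{syymB}, the bounds \eqref{Best1}--\eqref{Best11}, Gr\"onwall's inequality and the $\V^{*}$ bound on the time derivative, combined with a standard Faedo--Galerkin scheme and the Lions--Magenes lemma. Your explicit checks that $\mathfrak{C}'(\Z)$ is self-adjoint and that the adjoint convective term reduces to $\langle\mathfrak{B}(\p,\Z),\p\rangle$ are useful details within that same approach, as is your Ladyzhenskaya argument for $d=2$, $r\le 3$, a case the paper's linearized estimates leave implicit.
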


\section{An optimal control problem and the Hamilton--Jacobi--Bellman equation}
\setcounter{equation}{0}\label{anoptimaL}
In this section, we study the infinite-dimensional optimal control problem associated with the state equation \eqref{stapPMP} with forcing $\f\equiv\boldsymbol{0}$. We assume that the control takes values in a closed ball 
$\U=\mathrm{B}_{\H}(0,\mathtt{R})\subset\H$. Given 
$\z\in\D(\mathcal{A})$ and a control $\u\in\mathrm{L}^2(t,T;\H)$ such that 
\begin{align*}
\u(\zeta)\in\U \ \text{ for a.e. } \ \zeta\in[t,T],
\end{align*}
we consider the following cost functional associated with \eqref{stapPMP}:
\begin{align} \label{costFverf}
	\Upsilon(\Z; \u) =\frac12\int_t^T 
	\big[\|\nabla\Z(\zeta)\|_{\H}^2+\|\u(\zeta)\|_{\H}^2\big]\d\xi
	+\frac12\|\Z(T)\|_{\H}^2.
\end{align}
We define the admissible control class as 
\begin{align*}
\mathscr{U}:=\{\u\in\mathrm{L}^2(t,T;\H): \u(\zeta)\in\U \ \text{ for a.e. } \ \zeta\in[t,T]\}.
\end{align*}
 Moreover, we define the admissible class of solutions as follows:
	\begin{align*}
			\mathscr{A}:=\{(\Z,\u): \ \Z(\cdot)\ &\text{ is the unique strong solution to the system } \ \eqref{stapPMP} \\
			\ &\text{ with the control } \ \u(\cdot)\in\mathscr{U}\}.
		\end{align*} 
	In view of Theorem \ref{weLLp}, the system \eqref{stapPMP} admits a unique strong solution $\Z(\cdot)$ for any $\u(\cdot)\in\mathscr{U}$. Consequently, the admissible class $\mathscr{A}$ is non-empty. Thus, the optimal control problem is formulated as 
	\begin{align}\label{redopt}
			\min\limits_{(\Z,\u)\in\mathscr{A}}
			\Upsilon(\Z; \u).
		\end{align}
		\vskip 2mm
		\noindent
		\textbf{Physical interpretation of the cost functional \eqref{costFverf}.}
  From a physical perspective, the cost functional $\Upsilon(\Z;\u)$ is designed to balance three competing objectives over the control horizon $[t,T]$.
  The first term
  \begin{align*}
  	\frac{1}{2}\int_t^T \|\nabla \Z(\zeta)\|_{\H}^2\,\d\zeta,
  \end{align*}
  coincides with the enstrophy of the flow, $\int_t^T \|\nabla\times \Z(\zeta)\|_{\H}^2\,\d\zeta$. Enstrophy quantifies the intensity of rotational structures in the fluid and is closely related to the rate of viscous energy dissipation. Penalizing this term therefore reflects the physical goal of suppressing the growth of vorticity and rotational activity in the flow over $[t,T]$.
  
  The second term
  \begin{align*}
  	\frac{1}{2}\int_t^T \|\u(\zeta)\|_{\H}^2\,\d\zeta,
  \end{align*}
  represents the energy cost associated with applying the control input $\u$. Its inclusion prevents the optimal control from becoming unboundedly large, reflecting the physical fact that actuation is never free: the controller must regularize the flow economically rather than through arbitrarily strong intervention.
  
  The third term
  \begin{align*}
  	\frac{1}{2}\|\Z(T)\|_{\H}^2,
  \end{align*}
  penalizes the magnitude of the state at the terminal time $T$, driving the system toward a low-energy configuration by the end of the control horizon.
  
  \emph{Thus, minimizing $\Upsilon(\Z;\u)$ corresponds to seeking a control $\u$ that steers the flow so as to suppress vortical activity and bring the system to a low energy state at time $T$, while doing so with the least possible expenditure of control effort.}

	Any solution to Problem \ref{redopt} is called an \emph{optimal solution}, denoted by $(\Z^{*},\u^{*})$, which represents the optimal pair. Here, $\Z^{*}(\cdot)$ is the unique strong solution of the system \eqref{stapPMP} (called optimal trajectory) associated with the optimal control $\u^{*}\in\mathscr{U}$.
	
We now state the existence result for such an optimal pair. Its proof proceeds along the same lines as that of \cite[Theorem 3.3]{Op1}.
	
	\begin{theorem}[Existence of an optimal pair]
		Assume that $\z\in\D(\mathcal{A})$.  Then, for $r\geq1$ in $d=2$ and $r\geq3$ in $d=3$, there exists at least one admissible pair $(\Z^{*},\u^{*})$ for which the functional 
			$\Upsilon(\Z; \u)$ attains its minimum.
		\end{theorem}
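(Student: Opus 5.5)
The proof will use the direct method of the calculus of variations, following \cite[Theorem 3.3]{Op1}. Since $\boldsymbol{0}\in\mathscr{U}$, we have
\[
\inf_{(\Z,\u)\in\mathscr{A}}\Upsilon(\Z;\u)=:\Upsilon^{*}\in[0,\infty),
\]
and it is finite because the cost at $\u=\boldsymbol{0}$ is bounded by \eqref{eqncont2}. Choose a minimizing sequence $(\Z_n,\u_n)\in\mathscr{A}$ with $\Upsilon(\Z_n;\u_n)\to\Upsilon^{*}$.

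\textbf{Compactness.} Since $\|\u_n(\zeta)\|_{\H}\leq\mathtt{R}$, the sequence $\u_n$ is bounded in $\mathrm{L}^2(t,T;\H)$. Because $\mathscr{U}$ is convex and closed, it is weakly closed, so along a subsequence $\u_n\rightharpoonup\u^{*}$ weakly in $\mathrm{L}^2(t,T;\H)$ with $\u^{*}\in\mathscr{U}$. The strong energy estimate \eqref{eqncont2} gives a uniform bound on $\Z_n$ in
\[
\mathrm{L}^\infty(t,T;\V)\cap\mathrm{L}^2(t,T;\D(\mathcal{A}))\cap\mathrm{L}^{r+1}(t,T;\wi\L^{r+1}).
\]
From the equation, $\frac{\d\Z_n}{\d s}$ is bounded in $\mathrm{L}^2(t,T;\H)$. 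To see this, note that $\mathfrak{B}(\Z_n)$ is bounded in $\mathrm{L}^2(t,T;\H)$ by Agmon's inequality, as in \eqref{bc0t}. For $\mathfrak{C}(\Z_n)$ we have $\|\mathfrak{C}(\Z_n)\|_{\H}=\|\Z_n\|_{\wi\L^{2r}}^r$, which is bounded using $\V\hookrightarrow\wi\L^{2r}$ in $d=2$. In $d=3$ we instead use the bound on $\||\Z_n|^{\frac{r-1}{2}}\nabla\Z_n\|_{\H}$ from \eqref{eqncont2}, which controls $\nabla|\Z_n|^{\frac{r+1}{2}}$ and hence $\Z_n$ in $\mathrm{L}^{r+1}(t,T;\wi\L^{3(r+1)})$, together with interpolation as in \eqref{unqA5}. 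If $\mathrm{L}^2$ in time fails for large $r$, it suffices to bound $\mathfrak{C}(\Z_n)$ in $\mathrm{L}^{\frac{r+1}{r}}(t,T;\wi\L^{\frac{r+1}{r}})$ and place $\frac{\d\Z_n}{\d s}$ in the sum space. The Aubin--Lions lemma then yields $\Z_n\to\Z^{*}$ strongly in $\mathrm{L}^2(t,T;\V)$ and in $\C([t,T];\H)$, together with weak-$*$ and weak convergence in the spaces above.

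\textbf{Passage to the limit.} This is the main technical step. The linear terms converge weakly. For $\mathfrak{B}(\Z_n)\to\mathfrak{B}(\Z^{*})$ weakly, we use strong $\mathrm{L}^2(t,T;\V)$ convergence together with the uniform $\mathrm{L}^\infty(t,T;\V)$ bound. For $\mathfrak{C}(\Z_n)$, we have $\Z_n\to\Z^{*}$ a.e. in $(t,T)\times\mathbb{T}^d$ along a subsequence, and $\mathfrak{C}(\Z_n)$ is uniformly bounded in a reflexive space; a Lions-type lemma on a.e. convergence plus boundedness then gives weak convergence to $\mathfrak{C}(\Z^{*})$. An alternative is the estimate \eqref{Ctayest} combined with strong convergence. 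The control term passes since $\u_n\rightharpoonup\u^{*}$. Hence $\Z^{*}$ satisfies \eqref{stapPMP} with control $\u^{*}$ and the regularity of a strong solution. By uniqueness (Proposition \ref{weLLp}), $\Z^{*}$ is the strong solution, so $(\Z^{*},\u^{*})\in\mathscr{A}$.

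\textbf{Lower semicontinuity.} Since $\Z_n\to\Z^{*}$ strongly in $\mathrm{L}^2(t,T;\V)$, we get $\int_t^T\|\nabla\Z_n\|_{\H}^2\to\int_t^T\|\nabla\Z^{*}\|_{\H}^2$. Since $\Z_n\to\Z^{*}$ in $\C([t,T];\H)$, we get $\Z_n(T)\to\Z^{*}(T)$ in $\H$. Finally, $\int_t^T\|\u^{*}\|_{\H}^2\leq\liminf_n\int_t^T\|\u_n\|_{\H}^2$ by weak lower semicontinuity of the norm. Therefore $\Upsilon(\Z^{*};\u^{*})\leq\liminf_n\Upsilon(\Z_n;\u_n)=\Upsilon^{*}$, so $(\Z^{*},\u^{*})$ is an optimal pair.
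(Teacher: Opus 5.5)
Your proposal is correct and follows the same route the paper invokes by citing \cite[Theorem 3.3]{Op1}, namely the direct method: a minimizing sequence, uniform strong-solution bounds from \eqref{eqncont2}, Aubin--Lions compactness, identification of the limit (with uniqueness placing $(\Z^{*},\u^{*})$ in $\mathscr{A}$), and lower semicontinuity of $\Upsilon$. One caveat, which the paper's statement shares, is that \eqref{eqncont2} and the strong solvability defining $\mathscr{A}$ are only available under the hypotheses of Proposition \ref{weLLp}, so in $d=3$ your argument covers $r>3$ and $r=3$ with $2\beta\mu\geq1$ rather than all $r\geq3$.
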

	
We define the value function as
\begin{align}\label{vaLvar}
	\mathpzc{V}(t,\z):=\min\limits_{\u(\cdot)\in\mathscr{U}}
	\Upsilon(\Z; \u). 
\end{align}
It can be shown that the value function defined above formally yields the following HJB equation:
   \begin{equation}\label{detHJB1var}
	\left\{
	\begin{aligned}
		\mathpzc{V}_t-\mathcal{F}(\z,\mathcal{D} \mathpzc{V})&=0, \ \text{ for } (t,\z)\in(0,T)\times\D(\mathcal{A}), \\
		\mathpzc{V}(T,\z)&=\frac12\|\z\|_{\H}^2, \ \text{ for } \ \z\in\H.
	\end{aligned}
	\right.
\end{equation}
The Hamiltonian function $\mathcal{F}$ in \eqref{detHJB1var} is obtained from the \emph{pseudo-Hamiltonian function} in the following way:
\begin{align}\label{hamfunvar}
	\mathcal{F}(\z,\q)=\sup\limits_{\u\in\U} \wi{\mathcal{F}}(\z,\q,\u),
\end{align} 
where 
\begin{align}\label{pseudo}
\wi{\mathcal{F}}(\z,\q,\u):=
\big(\mu\mathcal{A}\z+\mathfrak{B}(\z)+\alpha\z+\beta\mathfrak{C}(\z),\q\big) -(\u,\q)-\frac12\big[\|\nabla\z\|_{\H}^2+\|\u\|_{\H}^2\big].
\end{align}
Then, the Hamiltonian function is given by 
\begin{align*}
		\mathcal{F}(\z,\q)=\big(\mu\mathcal{A}\z+\mathfrak{B}(\z)+\alpha\z+\beta\mathfrak{C}(\z),\q\big)+\Gamma(\q)-\frac12\|\nabla\z\|_{\H}^2,
\end{align*}
where 
	\begin{align*}
	\Gamma(\q)=\left\{\begin{array}{cc}
		\frac{1}{2}\|\q\|_{\H}^2, 
		&\text{ if } \ \|\q\|_{\H}\leq\mathtt{R},\\
		\mathtt{R}\|\q\|_{\H}-\frac{\mathtt{R}^2}{2}, &\text{ if } \ \|\q\|_{\H}>\mathtt{R},\end{array}\right.
\end{align*}
for some $\mathtt{R}>0$.  	Moreover, we remark that the optimal feedback control is given \emph{formally} as 
  	\begin{align}\label{optfeed}
	  		\wi\u(\cdot)=\upsigma\big(\q(\cdot)\big),
	  	\end{align}
  	where the function $\upsigma(\cdot)$ is given by
  	\begin{align}\label{upsigma}
	  		\upsigma(\q)=
	  		\begin{cases}
		  			-\q, \ &\text{ if } \ \ \|\q\|_{\H}\leq\mathtt{R},\\
		  			-\frac{\mathtt{R}}{\|\q\|_{\H}}\q, \ &\text{ if } \ \ \|\q\|_{\H}>\mathtt{R},
		  		\end{cases}
	  	\end{align}
  	and $\q(\cdot)=\mathcal{D}\mathpzc{V}(\cdot,\Z(\cdot))$, if $\mathpzc{V}$ is smooth. Since, we are dealing with a highly non-smooth solution (that is, viscosity solutions), therefore the above expressions require careful justification. To this end, we formulate the argument in the viscosity solution framework, where these steps properly justified. 
  	
  The following result establishes the continuity of the value function \eqref{vaLvar} and the Bellman principle of optimality. Its proof relies on arguments analogous to those in \cite{SSS} (see also \cite{FGSSA1}). Since the reasoning is standard and does not entail any additional technical difficulties, we omit the details.
  	\begin{theorem}
  		The value function $\mathpzc{V}$ is continuous, that is, $\mathpzc{V}\in\C([0,T];\H)$. Moreover, it satisfies the following properties:
  		
  		\textbf{(i) Locally Lipschitz in space variable.} For all $\mathpzc{R}>0$, there exists a constant $k=k(\mathpzc{R})>0$ such that
  		\begin{align}\label{Lpsch1}
  			|\mathpzc{V}(s,\z_1)-\mathpzc{V}(s,\z_2)|\leq k\|\z_1-\z_2\|_{\H},
  		\end{align}
  		for all $\z_1,\z_2\in\H$ with $\|\z_1\|_{\H},\|\z_2\|_{\H}\leq\mathpzc{R}$ and all $s\in[0,T]$.
  		
  			\textbf{(ii) Locally Lipschitz in time variable.} For all $\mathpzc{R}>0$, there exists a constant $k=k(\mathpzc{R})>0$ such that
  		\begin{align}\label{Lpsch2}
  			|\mathpzc{V}(t_1,\z)-\mathpzc{V}(t_2,\z)|\leq k|t_1-t_2|,
  		\end{align}
  		for all $\z\in\V$ with $\|\z\|_{\V}\leq\mathpzc{R}$ and all $t_1,t_2\in[t,T]$.
  
  			\textbf{(iii) Continuity in the negative norm.} For all $\mathpzc{R}>0$, there exists a constant $k=k(\mathpzc{R})>0$ such that
  			\begin{align*}
  				|\mathpzc{V}(s,\z_1)-\mathpzc{V}(s,\z_2)|\leq k\|\z_1-\z_2\|_{\V^{*}},
  			\end{align*}
  			for all $\z_1,\z_2\in\V$ with $\|\z_1\|_{\V}, \|\z_2\|_{\V}\leq\mathpzc{R}$ and all $s\in[t,T]$.
  			
  			 \textbf{(iv) Dynamic Programming Principle (DPP):}
  			For all $0 \leq t \leq s \leq T$ and $\z \in \V$, we have:
  			\begin{align}\label{vdpp}
  				\mathpzc{V}(t, \z) = \inf_{\u(\cdot)\in\mathscr{U}} \left\{ 
  			\frac12\int_t^s
  			\big[\|\nabla\Z(\xi)\|_{\H}^2 +\|\u(\xi)\|_{\H}^2\big]\d\xi+ 
  			\mathpzc{V}(s, \Z(s)) \right\}.
  			\end{align}
  	\end{theorem}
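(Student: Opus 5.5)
The plan is to prove the four properties in the order (i), (iii), (ii), (iv). Continuity of $\mathpzc{V}$ in $(t,\z)$ then follows by combining (i) with (ii), using density of $\V$ in $\H$ and the uniform local Lipschitz constant in (i).

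The common starting point is an a priori bound. For $\u\in\mathscr{U}$ we have $\|\u\|_{\mathrm{L}^2(t,T;\H)}^2\leq \mathtt{R}^2T$. Hence \eqref{eqncont1} and \eqref{eqncont2} bound every admissible trajectory uniformly, in terms of $\|\z\|_{\H}$ (respectively $\|\z\|_{\V}$), $\mathtt{R}$ and $T$.

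\textbf{Spatial Lipschitz bounds (i) and (iii).} Fix $s$ and $\z_1,\z_2$. Take $\eps>0$ and choose $\u$ that is $\eps$-optimal for $\z_2$. We use the same $\u$ for $\z_1$, so that
\[
\mathpzc{V}(s,\z_1)-\mathpzc{V}(s,\z_2)\leq \Upsilon(\Z_1;\u)-\Upsilon(\Z_2;\u)+\eps .
\]
The cost difference is controlled by Cauchy--Schwarz through
\[
\int\big(\|\nabla\Z_1\|_{\H}+\|\nabla\Z_2\|_{\H}\big)\|\nabla(\Z_1-\Z_2)\|_{\H}\,\d\zeta+\big(\|\Z_1(T)\|_{\H}+\|\Z_2(T)\|_{\H}\big)\|(\Z_1-\Z_2)(T)\|_{\H}.
\]

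For (i) we need the weak-solution difference estimate in $\mathrm{L}^\infty(\H)\cap\mathrm{L}^2(\V)$. We obtain it by testing the difference equation \eqref{redstapdff1} with $\mathfrak{Z}$, using \eqref{3.4} together with the monotonicity \eqref{monoC2} for $r>3$; for $r=3$ with $2\beta\mu\geq1$ we use the analogous absorption of the convective term; and in $d=2$ we use Ladyzhenskaya's inequality. Gronwall's inequality then gives $\|\mathfrak{Z}\|^2_{\mathrm{L}^\infty(\H)\cap\mathrm{L}^2(\V)}\leq C\|\z_1-\z_2\|_{\H}^2$, with $C$ depending only on $\mathpzc{R}$, $\mathtt{R}$, $T$. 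Exchanging the roles of $\z_1$ and $\z_2$ gives the symmetric bound.

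For (iii) we use \eqref{z12negsob} instead. It controls $\int\|\mathfrak{Z}\|_{\H}^2$ and $\sup\|(\mathcal{A}+\I)^{-1/2}\mathfrak{Z}\|_{\H}$ by $\|\z_1-\z_2\|_{\V^*}$. The gradient term and the terminal term, however, require $\mathrm{L}^2(\V)$ and $\H$ control at time $T$. Here we integrate by parts,
\[
\int(\nabla\Z_1+\nabla\Z_2,\nabla\mathfrak{Z})=\int(\mathcal{A}(\Z_1+\Z_2),\mathfrak{Z}),
\]
and bound the result by $\|\mathcal{A}(\Z_1+\Z_2)\|_{\mathrm{L}^2(\H)}\|\mathfrak{Z}\|_{\mathrm{L}^2(\H)}$, which is finite by \eqref{eqncont2}. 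We handle the terminal term similarly, pairing $(\Z_1+\Z_2)(T)\in\V$ with $\mathfrak{Z}(T)$ in the $\V$–$\V^*$ duality. This duality step, together with the fact that the constant in \eqref{z12negsob} depends only on $\|\z_i\|_{\V}$ and $\mathtt{R}$, uniformly over $\u\in\mathscr{U}$, is the one delicate point. It is exactly where strong-solution regularity, and hence the restriction to Table~\ref{Table1}, is needed.

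\textbf{Dynamic programming principle (iv).} We follow the standard argument. For "$\leq$", fix $\u$ on $[t,s]$, choose an $\eps$-optimal control from $(s,\Z(s))$, and concatenate. Uniqueness of strong solutions gives the flow (semigroup) property, so the concatenated trajectory is the solution for the concatenated control. For "$\geq$", restrict an $\eps$-optimal control for $(t,\z)$ to $[s,T]$ and use the definition of $\mathpzc{V}(s,\Z(s))$.

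\textbf{Time Lipschitz bound (ii).} Apply (iv) with the control $\u\equiv\boldsymbol{0}$ on $[t_1,t_2]$. This gives
\[
\mathpzc{V}(t_1,\z)-\mathpzc{V}(t_2,\z)\leq C|t_1-t_2|+|\mathpzc{V}(t_2,\Z(t_2))-\mathpzc{V}(t_2,\z)|.
\]
We bound the last term by $k\|\Z(t_2)-\z\|_{\H}$ using (i). Since $\frac{\d\Z}{\d s}\in\mathrm{L}^2(\H)$ for strong solutions, we have $\|\Z(t_2)-\z\|_{\H}\leq C|t_2-t_1|^{1/2}$. To upgrade this to a Lipschitz bound, we instead use (iii) and estimate
\[
\|\Z(t_2)-\z\|_{\V^*}\leq\int_{t_1}^{t_2}\Big\|\frac{\d\Z}{\d s}\Big\|_{\V^*}\d s\leq C|t_2-t_1|.
\]
This needs $\frac{\d\Z}{\d s}\in\mathrm{L}^\infty(\V^*)$, which follows from $\Z\in\mathrm{L}^\infty(\V)$, $\u\in\mathrm{L}^\infty(\H)$, and the embeddings used in \eqref{cdere}. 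For the reverse inequality we take an arbitrary $\u$ in (iv) and argue symmetrically, bounding the running cost over $[t_1,t_2]$ by $C|t_2-t_1|$ and the state displacement the same way. I expect this $\V^*$-Lipschitz bound in time, uniform over admissible controls, to be the main technical step.
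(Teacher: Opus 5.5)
Your proposal is correct and follows the standard Sritharan-type argument that the paper cites but does not write out. You compare costs under a common control with a difference estimate for (i); you use the negative-norm estimate \eqref{z12negsob} with the $\V$--$\V^{*}$ pairing for (iii), which is exactly the use the paper's introduction announces; you concatenate controls for (iv); and you combine the DPP with a uniform $\mathrm{L}^{\infty}(\V^{*})$ bound on $\frac{\d\Z}{\d s}$ for (ii). One detail should be made explicit. At the endpoint $r=3$, $2\beta\mu=1$, absorbing $b(\mathfrak{Z},\mathfrak{Z},\Z_2)$ only against $\frac{\beta}{2}\||\Z_2|\mathfrak{Z}\|_{\H}^2$ uses up all of $\mu\|\nabla\mathfrak{Z}\|_{\H}^2$. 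To keep the $\mathrm{L}^2(\V)$ bound that (i) needs, write the convective term as $-\frac12 b(\mathfrak{Z},\mathfrak{Z},\Z_1+\Z_2)$ and absorb it with both halves of \eqref{monoC2}; this leaves $\frac{\mu}{2}\|\nabla\mathfrak{Z}\|_{\H}^2$.
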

  	After establishing DPP, the next step is to show that the value function $\mathpzc{V}$ is the viscosity solution of the HJB equation \eqref{detHJB1var}. Since our goal is to obtain a verification theorem  tailored to the specific optimal control problem  \eqref{redopt}, we adopt a similar framework as in \cite{SSS}. In particular, this requires a slight adaptation of the definition of the test function class given below.
  	\begin{definition}\label{testDfunc}
  	 A class of functions $\phi:[t,T]\times\H\to\R$ is called \emph{a test function of class $\mathscr{D}$} if the following holds:
  	 
  	 (i) $\phi(\cdot,\cdot):[t,T]\times\H\to\R$ is locally Lipschitz.
  	 
  	 (ii) The Fr\'echet derivative is locally Lipschitz 
  	       \begin{align*}
  	       	\mathcal{D}\phi=(\phi_s,\mathcal{D}_{\z}\phi):[t,T]\times\V\to\R\times\V
  	       \end{align*}
  	        is locally Lipschitz.
  	\end{definition}
  	
  	\begin{definition}\label{vaLuevarf}
  		We say that a value function $\mathpzc{V}:[0,T]\times\H\to\R$ is a \emph{viscosity solution} to the HJB equation \eqref{detHJB1var}  if for all $\phi\in\mathscr{D}$, 
  		
  		(i) $\mathpzc{V}-\phi$ has a local maximum at $(t,\z)\in(0,T)\times\D(\mathcal{A})$, then 
  		\begin{align*}
  			-\phi_s(t,\z)+\mathcal{F}(\z,\mathcal{D}_{\z}\phi)\leq0;
  		\end{align*}
  		
  		(ii) $\mathpzc{V}-\phi$ has a local minimum at $(t,\z)\in(0,T)\times\D(\mathcal{A})$, then 
  		\begin{align*}
  			-\phi_s(t,\z)+\mathcal{F}(\z,\mathcal{D}_{\z}\phi)\geq0.
  		\end{align*}
  	\end{definition}
  	We state the following result (without proof) regarding the existence of a viscosity solution to the HJB equation \eqref{detHJB1var}. Its proof can be obtained in a manner similar to that in \cite{SSS1,SSS}, with a slight modification.
  	\begin{theorem}
  		The value function $\mathpzc{V}:[0,T]\times\H\to\R$ is a viscosity solution to the HJB equation \eqref{detHJB1var} in the sense of Definition \ref{vaLuevarf}.
  	\end{theorem}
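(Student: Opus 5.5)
We follow the standard dynamic-programming route of \cite{SSS1,SSS} and prove the two viscosity inequalities of Definition \ref{vaLuevarf} separately. Both are derived from the DPP \eqref{vdpp} combined with a first-order expansion of the test function along a trajectory. Fix $\phi\in\mathscr{D}$ and $(t,\z)\in(0,T)\times\D(\mathcal{A})$. For a control $\u\in\mathscr{U}$ that is continuous in time, let $\Z(\cdot)=\Z(\cdot;t,\z,\u)$. By Proposition \ref{extregu}, $\Z\in\C([t,T];\D(\mathcal{A}))$ and $\frac{\d\Z}{\d s}\in\C([t,T];\H)$. Since $\phi$ has a locally Lipschitz Fr\'echet derivative $(\phi_s,\mathcal{D}_{\z}\phi)$ on $[t,T]\times\V$, the chain rule gives, for $s$ close to $t$,
\begin{align*}
\phi(s,\Z(s))-\phi(t,\z)=\int_t^s\Big[\phi_s(\tau,\Z(\tau))+\Big(\mathcal{D}_{\z}\phi(\tau,\Z(\tau)),\frac{\d\Z}{\d \tau}(\tau)\Big)\Big]\d\tau.
\end{align*}
The integrand is continuous in $\tau$, because $\Z$ is continuous into $\V$ (indeed into $\D(\mathcal{A})$), $\frac{\d\Z}{\d\tau}$ is continuous into $\H$, and $\mathcal{D}_{\z}\phi$ is continuous into $\V\subset\H$.

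\emph{Subsolution part (i).} Suppose $\mathpzc{V}-\phi$ has a local maximum at $(t,\z)$. Fix a constant control $\u\equiv\mathbf{u}_0\in\U$. The DPP gives
\[
\mathpzc{V}(t,\z)\le\frac12\int_t^s\big[\|\nabla\Z\|_{\H}^2+\|\mathbf{u}_0\|_{\H}^2\big]\d\xi+\mathpzc{V}(s,\Z(s)).
\]
The local maximum gives $\mathpzc{V}(s,\Z(s))-\mathpzc{V}(t,\z)\le\phi(s,\Z(s))-\phi(t,\z)$ for $s$ near $t$; this uses that $\Z(s)\to\z$ in $\H$. Combining the two, inserting the chain rule with $\frac{\d\Z}{\d\tau}=-\mu\mathcal{A}\Z-\mathfrak{B}(\Z)-\alpha\Z-\beta\mathfrak{C}(\Z)+\mathbf{u}_0$, dividing by $s-t$ and letting $s\downarrow t$, we get
\[
0\le\phi_s(t,\z)-\wi{\mathcal{F}}(\z,\mathcal{D}_{\z}\phi(t,\z),\mathbf{u}_0).
\]
Taking the supremum over $\mathbf{u}_0\in\U$ yields $-\phi_s+\mathcal{F}(\z,\mathcal{D}_{\z}\phi)\le0$.

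\emph{Supersolution part (ii).} Suppose $\mathpzc{V}-\phi$ has a local minimum at $(t,\z)$. For each $\varepsilon>0$ and $s$ close to $t$, the DPP provides an $\varepsilon(s-t)$-optimal control $\u_\varepsilon\in\mathscr{U}$. The local minimum then gives
\[
-\varepsilon(s-t)\le\int_t^s\Big[\phi_s+\big(\mathcal{D}_{\z}\phi,\tfrac{\d\Z_\varepsilon}{\d\tau}\big)+\tfrac12\|\nabla\Z_\varepsilon\|_{\H}^2+\tfrac12\|\u_\varepsilon\|_{\H}^2\Big]\d\tau.
\]
The integrand equals $\phi_s-\wi{\mathcal{F}}(\Z_\varepsilon,\mathcal{D}_{\z}\phi,\u_\varepsilon)\le\phi_s-\mathcal{F}(\Z_\varepsilon,\mathcal{D}_{\z}\phi)$. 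Note that this inequality holds pointwise in $\tau$ whatever $\u_\varepsilon$ is, because $\mathcal{F}$ is the supremum of $\wi{\mathcal{F}}$ over $\U$. It therefore remains to replace $\Z_\varepsilon(\tau)$ by $\z$ and $(\tau,\Z_\varepsilon(\tau))$ by $(t,\z)$ inside $\phi_s$ and $\mathcal{F}$, with an error that is $o(1)$ uniformly in $\varepsilon$. After that, dividing by $s-t$ and letting $s\downarrow t$ and then $\varepsilon\to0$ yields $-\phi_s+\mathcal{F}\ge0$.

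The main obstacle is this uniformity: $\Z_\varepsilon(\tau)\to\z$ in a topology strong enough for $\mathcal{F}$, uniformly over controls in $\mathscr{U}$. Since $\u_\varepsilon$ is only $\mathrm{L}^2$ in time, Proposition \ref{extregu} gives $\Z_\varepsilon\in\C([t,T];\V)\cap\mathrm{L}^2(t,T;\D(\mathcal{A}))$ with bounds \eqref{extreg1} uniform over $\mathscr{U}$, because $\|\u\|_{\mathrm{L}^2(t,T;\H)}\le\mathtt{R}\sqrt{T}$. We plan to work with the integrated form rather than pointwise values, so the chain rule is used in the weak form $\langle\frac{\d\Z}{\d\tau},\mathcal{D}_{\z}\phi\rangle$ with $\mathcal{D}_{\z}\phi\in\V$. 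Then:
\begin{itemize}
\item The term $(\mu\mathcal{A}\Z_\varepsilon,\mathcal{D}_{\z}\phi)$ is treated as $\mu(\nabla\Z_\varepsilon,\nabla\mathcal{D}_{\z}\phi)$, and is controlled by the $\V$-continuity of $\Z_\varepsilon$ at $t$.
\item The differences in $\mathfrak{B}$ and $\mathfrak{C}$ are bounded through \eqref{Ctayest}, the embedding $\V\hookrightarrow\wi\L^{r+1}$ (valid for $r$ in Table \ref{Table1}) and the Ladyzhenskaya inequality.
\item The term $\|\nabla\Z_\varepsilon\|_{\H}^2$ is handled the same way.
\end{itemize}
The uniform $\V$-modulus of continuity at $t$ comes from the energy identity for $\|\Z_\varepsilon(\tau)-\z\|_{\V}^2$ when $\z\in\D(\mathcal{A})$. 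Testing with $(\mathcal{A}+\I)(\Z_\varepsilon-\z)$, the control contributes at most $C\int_t^s\|\u_\varepsilon\|_{\H}\|(\mathcal{A}+\I)(\Z_\varepsilon-\z)\|_{\H}\d\tau\le C(s-t)^{1/2}$. The remaining terms are bounded by $(s-t)^{1/2}$ times constants from \eqref{eqncont2} and \eqref{extreg1}. Together these give $\sup_{\tau\in[t,s]}\|\Z_\varepsilon(\tau)-\z\|_{\V}\to0$ uniformly in $\varepsilon$, which closes the argument.
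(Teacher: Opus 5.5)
The paper gives no proof of this theorem; it only points to the dynamic-programming argument of \cite{SSS1,SSS}. Your proposal follows that argument, and part (i) is correct as written. In part (ii) your uniform-in-control bound $\sup_{\tau\in[t,s]}\|\Z_\varepsilon(\tau)-\z\|_{\V}\to0$ is what is needed when the near-optimal controls are only measurable. It follows from \eqref{eqncont2}, \eqref{extreg1} and $\|\u\|_{\mathrm{L}^2(t,T;\H)}\le\mathtt{R}\sqrt{T}$. You combine it with reading $\mathcal{F}$ in weak form as a continuous function on $\V\times\V$.

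\textbf{Two sign errors in part (ii).} As written, part (ii) proves the wrong inequality.

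\emph{First error.} The local minimum together with $\varepsilon(s-t)$-optimality gives
\[
\phi(s,\Z_\varepsilon(s))-\phi(t,\z)\le\mathpzc{V}(s,\Z_\varepsilon(s))-\mathpzc{V}(t,\z)\le\varepsilon(s-t)-\frac12\int_t^s\big[\|\nabla\Z_\varepsilon\|_{\H}^2+\|\u_\varepsilon\|_{\H}^2\big]\d\tau .
\]
So your integral of $\phi_s+(\mathcal{D}_{\z}\phi,\frac{\d\Z_\varepsilon}{\d\tau})+\frac12\|\nabla\Z_\varepsilon\|_{\H}^2+\frac12\|\u_\varepsilon\|_{\H}^2$ is bounded \emph{above} by $\varepsilon(s-t)$. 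It is not bounded below by $-\varepsilon(s-t)$.

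\emph{Second error.} Since $\mathcal{F}=\sup_{\U}\wi{\mathcal{F}}\ge\wi{\mathcal{F}}$, the integrand $\phi_s-\wi{\mathcal{F}}(\Z_\varepsilon,\mathcal{D}_{\z}\phi,\u_\varepsilon)$ is \emph{greater} than or equal to $\phi_s-\mathcal{F}(\Z_\varepsilon,\mathcal{D}_{\z}\phi)$, not less.

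Chained as you wrote them, your two inequalities give $-\varepsilon\le\phi_s-\mathcal{F}$ in the limit. That is the subsolution inequality again, not (ii).

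\textbf{Corrected chain.} With both directions fixed, and everything evaluated at $(\tau,\Z_\varepsilon(\tau))$,
\[
\int_t^s\big[\phi_s-\mathcal{F}(\Z_\varepsilon,\mathcal{D}_{\z}\phi)\big]\d\tau\le\int_t^s\big[\phi_s-\wi{\mathcal{F}}(\Z_\varepsilon,\mathcal{D}_{\z}\phi,\u_\varepsilon)\big]\d\tau\le\varepsilon(s-t).
\]
Your uniform replacement of $(\tau,\Z_\varepsilon(\tau))$ by $(t,\z)$ then gives $\phi_s(t,\z)-\mathcal{F}(\z,\mathcal{D}_{\z}\phi(t,\z))\le\varepsilon$, that is, $-\phi_s+\mathcal{F}\ge-\varepsilon$. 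Letting $\varepsilon\to0$ gives (ii).

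The mechanism you identified is the right one: the supremum over $\U$ absorbs the measurable control pointwise, so only $\Z_\varepsilon$ needs to converge uniformly in $\V$. Only the two inequality directions need fixing.
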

  	
\subsection{A remark on the uniqueness of the viscosity solution}
It should be noted that the test function class defined in \eqref{testDfunc} is not sufficiently rich to guarantee the uniqueness of viscosity solutions for \eqref{detHJB1var}. Indeed, proving uniqueness under this framework remains an open problem, as pointed out in \cite{SSS1}.
We now discuss the main obstruction that arises from Definition \ref{testDfunc} and prevents the standard uniqueness arguments from being applied.

The uniqueness of viscosity solutions is usually established via a comparison principle. Such a principle guarantees, under appropriate conditions, that any viscosity subsolution is bounded above by any viscosity supersolution. The proof generally employs the doubling of variables method and a suitable penalization argument. A central step in this analysis is the evaluation of the Hamiltonian $\mathcal{F}(\z_0,\p_0)$ at the extremal point $(t_0,\z_0)$ generated by the penalization procedure. For the CBF system (and, more generally, for HJB equations associated with the CBF equations), the Hamiltonian $\mathcal{F}$ contains the term $\big(\mu\mathcal{A}\z_0,\boldsymbol{\xi}\big)$ (see \eqref{hamfunvar}-\eqref{pseudo}) which is meaningful only when $\z_0\in\D(\mathcal{A})$. However, under Definition \ref{testDfunc}, there is no mechanism ensuring that the extremal point $\z_0$ arising from the doubling of variables or penalization argument belongs to $\D(\mathcal{A})$. Indeed, a generic locally Lipschitz test function $\phi$ may attains its extremum at a point $\z_0\in\H\setminus\D(\mathcal{A})$. Consequently, the quantity $\mathcal{A}\z_0$ may not be well defined, and the viscosity inequalities in Definition \ref{vaLuevarf} cannot even be formulated at $(t_0,\z_0)$. As a result, the comparison argument breaks down at its very first step.

Motivated by \cite{FGSSA,FGSSA1}, we overcome the above obstruction by modifying the class of test functions, following the approach developed in \cite{smtm2}. This modification ensures that the extremal points arising in the comparison argument are regular enough for the Hamiltonian to be well defined. While the uniqueness problem for the HJB equation associated with the 3D NSE remains open, the additional dissipation induced by the absorption term in the CBF system allows us to establish uniqueness in the three-dimensional setting. We now recall the corresponding class of test functions.
   \begin{definition}\label{testD}
	A function $\psi:[t,T]\times\H\to\R$ is said to be a test function of class $\mathscr{D}^+$ if $$\psi(t,\z)=\psi_0(t,\z)+\mathfrak{h}(t)\|\z\|_{\V}^2,$$ where 
		\begin{enumerate}
		\item\label{Dp1}
		$\psi_0\in\mathrm{C}^{1,1}([t,T]\times\H)$ and
		the partial derivative $(\psi_0)_t$ and the derivative $\mathcal{D}\psi_0$ are uniformly continuous on every closed subinterval $[\lambda,T-\lambda]\times\H$ for every $\lambda>0$.
		\item\label{Dp2}
		$\mathfrak{h}(\cdot)\in\mathrm{C}^1([t,T])$ and satisfies $\mathfrak{h}(s) > 0$ for all $s\in[t, T]$,
		\item\label{Dp3}
		the formal derivative
		\begin{align}\label{formader}
	\mathcal{D}_{\z} \|\z\|_{\V}^2 = 2(\mathcal{A} + \I)\z
		\end{align}
		is interpreted as an operator well-defined for $\z\in\D(\mathcal{A})$.
	\end{enumerate}
\end{definition}
\vskip 2mm
\noindent
\textbf{Relationship between $\mathscr{D}$ and $\mathscr{D}^+$.} Note that every $\psi\in\mathscr{D}^+$ belongs to $\mathscr{D}$ in the following sense: the
smooth part $\psi_0\in C^{1,1}\subset\mathscr{D}$ trivially, and the radial
part $\mathfrak{h}(t)\|\z\|_{\V}^2$ is locally Lipschitz on $(0,T)\times\V$. and its formal Fr\'echet derivative \eqref{formader} is locally Lipschitz from $\D(\mathcal{A})$ to $\H$. Hence
\begin{align*}
	\mathscr{D}^+\subset\mathscr{D}.
\end{align*}
The inclusion is strict because the class $\mathscr{D}$ contains locally Lipschitz functions with locally Lipschitz Fr\'echet derivative but with no coercive
radial part, for example, $\phi(t,\z)=\|\z\|_{\V}^2\cdot\chi(t)$ for some
$\chi\in\C^1([t,T])$.
Such functions belong to $\mathscr{D}$ but not to $\mathscr{D}^+$
(since they cannot be written in the form as mentioned in Definition \ref{testD}).
	\vskip 2mm
	\noindent
	\textbf{Why $\z_0\in\D(\mathcal{A})$ is now guaranteed.}
	\label{rem:v0inDA}
	The key feature of Definition \ref{testD} is that the radial
	term $\mathfrak{h}(t)\|\z\|_{\V}^2$ is coercive in the
	$\|\cdot\|_{\V}$-norm. Therefore, whenever $u-\psi$ (where $u$ is the viscosity subsolution of the associated HJB equation) attains a global maximum at $(t_0,\z_0)$, the bound
	\begin{align*}
		u(t_0,\z_0)-\psi(t_0,\z_0)\ge u(t,\z)-\psi(t,\z) \ \text{ for all }  (t,\z)\in(0,T)\times\V,
	\end{align*}
	combined with the polynomial growth bound on $u$ (of the form
	$|u(t,\z)|\le C(1+\|\z\|_{\V}^k)$ for some $k>0$, analogous to
	\cite[Theorem 5.2]{smtm2} forces $\|\z_0\|_{\V}<\infty$, that is, $\z_0\in\V$.  The additional regularity $\z_0\in\D(\mathcal{A})$ then
	follows from a standard arguments based on energy estimates (Proposition \ref{weLLp})  exploiting the CBF dynamics.  Once $\z_0\in\D(\mathcal{A})$, then $\mathcal{A}\z_0$,  $\mathfrak{B}(\z_0)$, and $\mathfrak{C}(\z_0)$ are
	well-defined in $\H$, and the Hamiltonian $\mathcal{F}(\z_0,\mathcal{D}_{\z}(t_0,\z_0))$ can be evaluated rigorously.

We now provide  the definition of viscosity solution and state the comparison principle which is compatible with the test function class $\mathscr{D}^+$. A detailed proof is given in \cite[Section 5]{smtm2}. 
\begin{definition}[Viscosity solution in class $\mathscr{D}^+$]\label{viscositynew}
	A function $u:(0,T)\times\V\to\R$, weakly sequentially
	upper-semicontinuous (respectively, lower-semicontinuous) on
	$(0,T)\times\V$, is a \emph{viscosity subsolution} (respectively, \emph{viscosity supersolution}) of \eqref{detHJB1var} with respect to $\mathscr{D}^+$ if, for every
	$\phi\in\mathscr{D}^+$, whenever $u-\phi$ attains a global maximum
	(respectively, $u+\phi$ attains a global minimum) over
	$(0,T)\times\V$ at $(t_0,\z_0)$, then
	$\z_0\in\D(\mathcal{A})$ and
	\begin{align*}
		-\partial_t\phi(t_0,\z_0)+\mathcal{F}(\z_0,\partial_v\phi(t_0,\z_0))
		&\leq 0\\
		(\text{respectively,}
		-\partial_t\phi(t_0,\z_0)
		+\mathcal{F}(\z_0,\mathcal{D}_{\z}\phi(t_0,\z_0))
		&\geq 0).
	\end{align*}
	A function is a \emph{viscosity solution} if it is both a
	viscosity subsolution and a viscosity supersolution.
\end{definition}
The Hamiltonian $\mathcal{F}(\z,p)$  given in \eqref{hamfunvar} satisfies the continuity hypotheses of \cite[Hypothesis 5.1, Section 5]{smtm2}. Therefore, the comparison principle established in \cite[Theorem 5.2]{smtm2} is applicable in our framework and can be stated as follows.
\begin{theorem}[Comparison principle: Uniqueness]
	Let $u,v:(0,T)\times\V\to\R$ be, respectively, a viscosity
	subsolution and a viscosity supersolution of \eqref{detHJB1var}
	in the sense of Definition \ref{detHJB1var}, with polynomial growth
	\begin{align*}
		u(t,\z),\,-v(t,\z)\leq C(1+\|\z\|_{\V}^k)
	\end{align*}
	for some $k>0$, and satisfying the terminal conditions
	\begin{align*}
	\lim_{t\uparrow T}(u(t,\z)-g(\z))^+=0 \ \text{ and }
	\lim_{t\uparrow T}(v(t,\z)-g(\z))^-=0, \ \text{ where } \ g(\z)=\frac12\|\z\|_{\V}^2,
	\end{align*}
	uniformly on bounded subsets of $\V$. Then, $u\leq v$ on $(0,T]\times\V$.
\end{theorem}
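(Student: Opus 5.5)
The plan is the standard doubling-of-variables argument, adapted to the class $\mathscr{D}^+$ in the spirit of \cite{FGSSA1,smtm2}. We argue by contradiction and suppose $\sup_{(0,T]\times\V}(u-v)>0$. For parameters $\varepsilon,\delta,\eta,\gamma>0$ we consider the auxiliary functional
\begin{align*}
\Phi(t,s,\z,\y)=u(t,\z)-v(s,\y)-\frac{1}{2\varepsilon}\|(\mathcal{A}+\I)^{-\frac12}(\z-\y)\|_{\H}^2-\frac{(t-s)^2}{2\beta_0}-\delta\,\mathrm{e}^{Kt}\big(\|\z\|_{\V}^2+\|\y\|_{\V}^2\big)-\frac{\eta}{t}-\frac{\eta}{s}+\gamma t.
\end{align*}
Here $\beta_0>0$ is a time-penalization parameter, $K$ is a large constant, and the $\eta$-terms keep the extremal times away from $0$.

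\textbf{Step 1: existence of a maximizer.} The radial terms dominate the polynomial growth of $u$ and $-v$. Since $u$ is weakly sequentially upper semicontinuous and $-v$ is weakly sequentially upper semicontinuous, and $\|\cdot\|_{\V}^2$ is weakly lower semicontinuous, a maximizing sequence is bounded in $\V$. Weak compactness in $\V$, together with compactness of $\V\hookrightarrow\H\hookrightarrow\D((\mathcal{A}+\I)^{-1/2})$ on the torus, then gives a maximum point $(\bar t,\bar s,\bar\z,\bar\y)$.

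\textbf{Step 2: localization.} Standard comparison of $\Phi$ at the maximizer with its value at diagonal points gives $|\bar t-\bar s|^2/\beta_0\to0$ and $\|(\mathcal{A}+\I)^{-1/2}(\bar\z-\bar\y)\|_{\H}^2/\varepsilon\to0$. It also gives the bound $\delta(\|\bar\z\|_{\V}^2+\|\bar\y\|_{\V}^2)\le C$. The terminal conditions, holding uniformly on bounded sets of $\V$, exclude $\bar t$ or $\bar s$ equal to $T$ for small parameters.

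\textbf{Step 3: viscosity inequalities.} Freezing $(s,\y)$, the map $(t,\z)\mapsto u-\psi$ is maximal at $(\bar t,\bar\z)$, where $\psi=\psi_0+\delta\mathrm{e}^{Kt}\|\z\|_{\V}^2$ lies in $\mathscr{D}^+$. Here $\psi_0$ collects the $C^{1,1}$ penalization terms; note that $(\mathcal{A}+\I)^{-1}$ is bounded, so these are smooth on $\H$. Definition \ref{viscositynew} then yields $\bar\z\in\D(\mathcal{A})$ together with the subsolution inequality. The same reasoning applied to $v$ gives $\bar\y\in\D(\mathcal{A})$ and the supersolution inequality. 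Subtracting the two inequalities, the $\gamma$ term and the $\eta/t^2$ terms appear on one side, while on the other we must control
\begin{align*}
\mathcal{F}(\bar\z,\q_1)-\mathcal{F}(\bar\y,\q_2),\quad \q_1=\tfrac1\varepsilon(\mathcal{A}+\I)^{-1}(\bar\z-\bar\y)+2\delta\mathrm{e}^{K\bar t}(\mathcal{A}+\I)\bar\z,
\end{align*}
with $\q_2$ defined analogously.

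\textbf{Step 4: estimating the Hamiltonian difference (main obstacle).} This is the step I expect to be hardest. The linear parts produce
\begin{align*}
\tfrac{\mu}{\varepsilon}\|\bar\z-\bar\y\|_{\H}^2\ge0
\end{align*}
with the good sign, and the radial parts produce good terms such as $2\delta\mu\|(\mathcal{A}+\I)\bar\z\|_{\H}^2$. The nonlinear pairings against $(\mathcal{A}+\I)^{-1}(\bar\z-\bar\y)/\varepsilon$ are exactly $\mathcal{T}_1,\mathcal{T}_2$ of Proposition \ref{supVnorm}. By \eqref{bz12}--\eqref{cz12} they are absorbed by $\frac{\mu}{2\varepsilon}\|\bar\z-\bar\y\|_{\H}^2$, up to terms bounded by
\begin{align*}
\tfrac{C}{\varepsilon}\big(\|\bar\z\|_{\D(\mathcal{A})}^2+\|\bar\y\|_{\D(\mathcal{A})}^2+\dots\big)\|(\mathcal{A}+\I)^{-1/2}(\bar\z-\bar\y)\|_{\H}^2 .
\end{align*}
The pairings of $\mathfrak{B},\mathfrak{C}$ with $\delta(\mathcal{A}+\I)\bar\z$ are handled by \eqref{syymB3} and \eqref{torusequ}. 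These give a lower bound $\delta\beta\||\bar\z|^{\frac{r-1}{2}}\nabla\bar\z\|_{\H}^2$ that absorbs the convective part, and this is where the restrictions of Table \ref{Table1} enter, since $2\beta\mu\ge1$ is needed when $r=3$. The remaining terms are handled using $K$ large. The difficulty is that $\D(\mathcal{A})$-norms appear multiplied by $1/\varepsilon$ while only $\delta\|(\mathcal{A}+\I)\cdot\|^2$ is available to control them. I would try to balance these via Young's inequality, choosing the order of limits $\varepsilon\to0$ first and then $\delta\to0$. The $\Gamma$-difference is Lipschitz in $\q$, and the $\|\nabla\cdot\|^2$-difference is controlled similarly. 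Letting $\beta_0\to0$, then $\varepsilon\to0$, then $\delta\to0$ produces $\gamma\le0$, a contradiction. This proves $u\le v$.
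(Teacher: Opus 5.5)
Your scheme is the standard route of \cite{FGSSA1,smtm2}; the paper itself gives no proof and only invokes \cite[Theorem 5.2]{smtm2}. The scheme consists of doubling variables with the penalization $\frac{1}{2\varepsilon}\|(\mathcal{A}+\I)^{-\frac12}(\z-\y)\|_{\H}^2$, adding a radial $\V$-term so that Definition \ref{viscositynew} places $\bar\z,\bar\y$ in $\D(\mathcal{A})$, treating the nonlinear differences as in Proposition \ref{supVnorm}, and sending $\varepsilon\to0$ before $\delta\to0$. The Hamiltonian bookkeeping in Step 4 is sound in structure, modulo the sign issue below.

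The genuine gap is in Step 1, where you claim that the radial terms dominate the growth of $u$ and $-v$. Since $u(t,\z)-\delta\mathrm{e}^{Kt}\|\z\|_{\V}^2\le C(1+\|\z\|_{\V}^k)-\delta\|\z\|_{\V}^2$, your functional $\Phi$ is bounded above for all small $\delta$ only when $k<2$. For $k\geq2$, $\sup\Phi$ may be $+\infty$ for small $\delta$. This includes $k=2$, which is the growth that goes with quadratic costs and the quadratic $g$. In that case there is no maximizer, and the bound on $\delta(\|\bar\z\|_{\V}^2+\|\bar\y\|_{\V}^2)$ in Step 2 is lost as well. This cannot be fixed by tuning constants. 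In $\mathscr{D}^+$ the radial part is fixed as $\mathfrak{h}(t)\|\z\|_{\V}^2$, and $\psi_0\in\mathrm{C}^{1,1}$ grows at most quadratically in $\|\z\|_{\H}$. So no admissible test function dominates growth of order $k>2$, and for $k=2$ only one whose radial coefficient does not tend to zero. As written, your argument proves comparison only for $k<2$. To reach general $k$ you must enlarge the radial part, e.g.\ to $\mathfrak{h}(t)\|\z\|_{\V}^{m}$ with $m>k$. Its derivative $m\,\mathfrak{h}(t)\|\z\|_{\V}^{m-2}(\mathcal{A}+\I)\z$ has the same structure, so Step 4 carries over with an extra scalar weight.

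Two further steps need correcting.

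\textbf{The sign of $K$.} Under Definition \ref{viscositynew} the subsolution inequality contains $-\partial_t\psi$. So $\mathfrak{h}(t)=\delta\mathrm{e}^{Kt}$ contributes $-K\delta\mathrm{e}^{K\bar t}\|\bar\z\|_{\V}^2$, which works against the contradiction. You need $\mathfrak{h}(t)=\delta\mathrm{e}^{K(T-t)}$. The $\y$-term must also carry its own weight $\delta\mathrm{e}^{K(T-s)}$; otherwise the supersolution side receives no $K$-term at all. These $K$-terms are exactly what absorbs the remainders $2\mathfrak{h}\varrho\|\nabla\bar\z\|_{\H}^2$ and $2\mathfrak{h}\varrho\|\nabla\bar\y\|_{\H}^2$ left over from \eqref{syymB3}. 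Nothing in Step 2 makes those remainders vanish as $\delta\to0$.

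\textbf{Excluding $\bar t,\bar s$ near $T$.} Near $T$ you only get $\Phi\le g(\bar\z)-g(\bar\y)+o(1)$. For $g(\z)=\frac12\|\z\|_{\V}^2$ this difference need not be small when you only know $\|(\mathcal{A}+\I)^{-\frac12}(\bar\z-\bar\y)\|_{\H}\to0$ with $\bar\z,\bar\y$ bounded in $\V$; adding a high-frequency mode to $\bar\y$ gives a counterexample. The step does work if $g$ is continuous for the penalization norm on $\V$-bounded sets. An example is $g(\z)=\frac12\|\z\|_{\H}^2$, as in \eqref{detHJB1var}, using $\|\boldsymbol{w}\|_{\H}^2\le\|(\mathcal{A}+\I)^{-\frac12}\boldsymbol{w}\|_{\H}\|\boldsymbol{w}\|_{\V}$.
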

  \begin{remark}
 Since $\mathscr{D}^+\subset\mathscr{D}$, every test function $\psi\in\mathscr{D}^+$ is in particular a test function in $\mathscr{D}$.  Therefore:
 \begin{align*}
 	\mathpzc{V}\text{ is a viscosity solution w.r.t.\ }\mathscr{D}
 	\implies
 	\mathpzc{V}\text{ is a viscosity solution w.r.t.\ }\mathscr{D}^+.
 \end{align*}
 In other words, restricting the class of test functions from $\mathscr{D}$ to the smaller class $\mathscr{D}^+$ weakens the notion of viscosity solution rather than strengthening it. Furthermore, all subsequent results established in the paper remain valid without modification. Indeed, since these results are proved for the larger test function class $\mathscr{D}$, they continue to hold a fortiori when the smaller class $\mathscr{D}^+\subset\mathscr{D}$ is employed. The only genuinely new consequence of introducing the refined class $\mathscr{D}^+$ is the validity of the comparison principle.
% The only result for which the specific structure of the test class
% matters is uniqueness, precisely because a comparison argument must
% evaluate the Hamiltonian at an arbitrary extremal point not
% necessarily on the optimal trajectory and it is only $\mathscr{D}^+$ that
% forces such points into $\D(\mathcal{A})$.
  \end{remark}
  	\section{Feedback analysis}\setcounter{equation}{0}\label{feedbackana}
 In this section, we introduce a functional $\mathcal{W}$, analogous to the value function $\mathpzc{V}$ but evaluated along the trajectory corresponding to a fixed admissible control, and establish its regularity properties namely continuity, local Lipschitz continuity, and Fr\'echet differentiability. These properties will be essential in relating $\mathcal{W}$ to $\mathpzc{V}$ via the viscosity solution framework in order to derive the feedback form of the optimal control.
  	
  		Consider the following system
  		\begin{equation}\label{stapocp}
  		\left\{
  		\begin{aligned}
  			\frac{\d\Z(s)}{\d s}&= -\mu\mathcal{A}\Z(s)-\mathfrak{B}(\Z(s))-\alpha\Z(s)-
  			\beta\mathfrak{C}(\Z(s))+\wi\u(s),  \  \text{ in } \ (\tau,T)\times\H, \\
  			\Z(\tau)&=\z\in\D(\mathcal{A}),
  		\end{aligned}
  		\right.
  	\end{equation}
  	where $\wi\u\in\mathscr{U}$ is an optimal control in $[t,T]$ with initial data $(t,\z)$. Note that in general $\Z(s,\tau,\z,\wi\u)$ would be optimal trajectory only when $\tau=t$ (for $\tau>t$, the control $\wi\u$ on $[\tau,T]$ is admissible only but not necessarily optimal). Let us define a functional $\mathcal{W}:[t,T]\times\H\to\R$ as 
  	\begin{equation}\label{wfunc}
  		\left\{
  		\begin{aligned}
  			\mathcal{W}(\tau,\z)&:=
  		\frac12\|\Z(T)\|_{\H}^2+\frac12\int_{\tau}^T 
  		\big[\|\nabla\Z(\xi)\|_{\H}^2+\|\wi\u(\xi)\|_{\H}^2\big]\d \xi,\\
  		\text{ with } \ \mathcal{W}(T,\z)&=\frac12\|\z\|_{\H}^2,
  		\end{aligned}
  		\right.
  	\end{equation}
  	where $\Z$ is a strong solution to the system \eqref{stapocp}. Let us examine some properties of $\mathcal{W}$. 
Define functions $\Phi(\cdot)$ and $\X(\cdot)$ for some $s\in[\tau,T]$ as 
\begin{equation}\label{phx}
\left\{
\begin{aligned}
\Phi(s)&:=\mathcal{D}_{\z}\Z(s,\tau;\z;\wi\u)\x, \ \x\in\H,\\
\X(s)&:=\Z_{\tau}(s,\tau;\z;\wi\u).
\end{aligned}
\right.
\end{equation}
It can be shown that $\Phi(\cdot)$ defined above satisfies the following linearized system in $\V^{*}+\L^{\frac{r+1}{r}}$: 
\begin{equation}\label{phx1}
\left\{
\begin{aligned}
\frac{\d\Phi(s)}{\d s}+ \mu\mathcal{A}\Phi(s)+\mathfrak{B}'(\Z(s))\Phi(s)+
\alpha\Phi(s)+\beta\mathfrak{C}'(\Z(s))\Phi(s)&=\boldsymbol{0},   \\
\Phi(\tau)&=\x\in\H,
\end{aligned}
\right.
\end{equation}
for a.e.  $s\in[\tau,T],$  in the weak sense. From Theorem \ref{soLvaLin}, the above linearized system \eqref{phx1} has a unique weak solution $\Phi(\cdot)\in\C([\tau,T];\H)\cap\mathrm{L}^2(\tau,T;\V)$ with 
$\frac{\d\Phi}{\d s}\in\mathrm{L}^2(\tau,T;\V^{*})$ and satisfies the following energy estimate:
\begin{align}\label{phiengest1}
 	&\|\Phi(s)\|_{\H}^2+\mu\int_{\tau}^s\|\nabla\Phi(\xi)\|_{\H}^2\d\xi+
 \alpha\int_{\tau}^s\|\Phi(\xi)\|_{\H}^2
 \d\xi+\beta\int_{\tau}^s \||\Z(\xi)|^{\frac{r-1}{2}}\Phi(\xi)\|_{\H}^2\d\xi
 \nonumber\\&\quad+
 2\beta(r-1)\int_{\tau}^s \||\Z(\xi)|^{\frac{r-3}{2}}(\Phi(\xi)\cdot\Z(\xi))\|_{\H}^2\d\xi\leq
\|\x\|_{\H}^2 e^{k s},
\end{align} 
for all $s\in[\tau,T]$, where $k$ is defined in \eqref{value-k}. Moreover, the function $\X(\cdot)$ defined in \eqref{phx} satisfies the following linearized system:
\begin{equation}\label{phx2}
\left\{
\begin{aligned}
\frac{\d\X(s)}{\d s}+ \mu\mathcal{A}\X(s)&+\mathfrak{B}'(\Z(s))\X(s)+
\alpha\X(s)\\ +\beta\mathfrak{C}'(\Z(s))\X(s)&=\boldsymbol{0},  \  \text{ for a.e.  } s\in[\tau,T], \\
\X(\tau)&=\mu\mathcal{A}\z+\mathfrak{B}(\z)+\alpha\z+
\beta\mathfrak{C}(\z)-\wi\u(\tau)\in\H.
\end{aligned}
\right.
\end{equation}
Note that the initial condition in \eqref{phx2} makes sense since $\z\in\D(\mathcal{A})$. Furthermore, from Theorem \ref{soLvaLin}, the above linearized system \eqref{phx2} has a unique weak solution $\X(\cdot)\in\C([\tau,T];\H)\cap\mathrm{L}^2(\tau,T;\V)$ with 
$\frac{\d\X}{\d s}\in\mathrm{L}^2(\tau,T;\V^{*})$ and satisfies the following energy estimate:
\begin{align}\label{Xengest1}
	&\|\X(s)\|_{\H}^2+\mu\int_{\tau}^s\|\nabla\X(\tau)\|_{\H}^2\d\zeta+
	\alpha\int_{\tau}^s\|\X(\zeta)\|_{\H}^2
	\d\tau+\beta\int_{\tau}^s \||\Z(\zeta)|^{\frac{r-1}{2}}\X(\zeta)\|_{\H}^2\d\zeta
	\nonumber\\&\quad+
	2\beta(r-1)\int_{\tau}^s \||\Z(\zeta)|^{\frac{r-3}{2}}(\X(\zeta)\cdot\Z(\zeta))\|_{\H}^2\d\zeta
	\nonumber\\&\leq
	C(\mu,\alpha,\beta,T, \mathtt{R},\|\mathcal{A}\z\|_{\H}),
\end{align} 
for all $s\in[\tau,T]$. 

The following propositions play a fundamental role in establishing the local Lipschitz continuity of the functional $\mathcal{W}$ (see Proposition \ref{WpropertyLip}). Moreover, they provide crucial estimates in negative norms for both the solutions and the differences of solutions to the linearized systems \eqref{phx1} and \eqref{phx2}. These estimates are indispensable for the subsequent analysis.
\begin{proposition}\label{phinegatest}
The weak solution $\Phi$ to the linearized system \eqref{phx1} satisfies the following energy estimate for the values of $r$ given in Table \ref{Table1}:
\begin{align}\label{phinegatest1}
\sup\limits_{s\in[\tau,T]}\|(\mathcal{A}+\I)^{-\frac12}\Phi(s)\|_{\H}^2+
\int_{\tau}^T \|\Phi(\xi)\|_{\H}^2\d\xi
%+\int_{\tau}^T\|(\mathcal{A}+\I)^{-\frac12}\Phi(\xi)\|_{\H}^2\d\xi
\leq 
C\big(\mu,\alpha,\beta,T,\|\z\|_{\V}\big)
\|(\mathcal{A}+\I)^{-\frac12}\x\|_{\H}^2.
\end{align}
Moreover, if $\Phi_1$ and $\Phi_2$ are two weak solutions of the system \eqref{phx1} with $\Phi_1(\tau)=\x_1$ and $\Phi_2(\tau)=\x_2$ corresponding to the strong solutions $\Z_1$ and $\Z_2$ of the system \eqref{stapPMP}, respectively, then we have
\begin{align}\label{phinegatest2}
&\sup\limits_{s\in[\tau,T]}
\|(\mathcal{A}+\I)^{-\frac12}(\Phi_1-\Phi_2)(s)\|_{\H}^2+
\int_{\tau}^T \|(\Phi_1-\Phi_2)(\xi)\|_{\H}^2\d\xi
%+\int_{\tau}^T\|(\mathcal{A}+\I)^{-\frac12}(\Phi_1-\Phi_2)(\xi)\|_{\H}^2\d\xi
\nonumber\\&\leq 
C\big(\mu,\alpha,\beta,T,\|\z\|_{\V}\big)\big[
\|(\mathcal{A}+\I)^{-\frac12}(\x_1-\x_2)\|_{\H}^2+
\|\z_1-\z_2\|_{\V}^4+\|\z_1-\z_2\|_{\V}^2\big].
\end{align}
\end{proposition}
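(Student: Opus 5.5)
The plan mirrors Step II of the proof of Proposition \ref{supVnorm}, now applied to the linear equation \eqref{phx1}. For \eqref{phinegatest1}, I would take the duality pairing of \eqref{phx1} with $(\mathcal{A}+\I)^{-1}\Phi$. This is legitimate because $\Phi\in\mathrm{L}^2(\tau,T;\V)$, $\frac{\d\Phi}{\d s}\in\mathrm{L}^2(\tau,T;\V^*)$ and $(\mathcal{A}+\I)^{-1}\Phi\in\mathrm{L}^2(\tau,T;\D(\mathcal{A}^{3/2}))$. The pairing gives
\begin{align*}
\frac12\frac{\d}{\d s}\|(\mathcal{A}+\I)^{-\frac12}\Phi\|_{\H}^2+\mu\|\Phi\|_{\H}^2+\alpha\|(\mathcal{A}+\I)^{-\frac12}\Phi\|_{\H}^2
=\mu\|(\mathcal{A}+\I)^{-\frac12}\Phi\|_{\H}^2-(\mathfrak{B}'(\Z)\Phi,(\mathcal{A}+\I)^{-1}\Phi)-\beta(\mathfrak{C}'(\Z)\Phi,(\mathcal{A}+\I)^{-1}\Phi).
\end{align*}
For the convective term, write $\mathfrak{B}'(\Z)\Phi=\mathfrak{B}(\Z,\Phi)+\mathfrak{B}(\Phi,\Z)$. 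In the first piece, skew-symmetry \eqref{skewsym} moves the derivative onto $(\mathcal{A}+\I)^{-1}\Phi$. Exactly as in \eqref{bz12}, both pieces are then bounded by
\[
C\|\Phi\|_{\H}\big(\|\Z\|_{\wi\L^\infty}\|\nabla(\mathcal{A}+\I)^{-1}\Phi\|_{\H}+\|\nabla\Z\|_{\wi\L^4}\|(\mathcal{A}+\I)^{-1}\Phi\|_{\wi\L^4}\big).
\]
Since $\|(\mathcal{A}+\I)^{-1}\Phi\|_{\V}\le\|(\mathcal{A}+\I)^{-\frac12}\Phi\|_{\H}$, Young's inequality turns this into $\frac{\mu}{4}\|\Phi\|_{\H}^2+C(\|\Z\|_{\wi\L^\infty}^2+\|\nabla\Z\|_{\wi\L^4}^2)\|(\mathcal{A}+\I)^{-\frac12}\Phi\|_{\H}^2$. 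By Agmon's inequality and \eqref{eqncont2}, the coefficient $\|\Z\|_{\wi\L^\infty}^2+\|\nabla\Z\|_{\wi\L^4}^2\lesssim\|\Z\|_{\V}^{2-\frac d2}\|(\mathcal{A}+\I)\Z\|_{\H}^{\frac d2}+\|\Z\|_{\V}^{2-\frac d2}\|(\mathcal{A}+\I)\Z\|_{\H}^{\frac d2}$ is integrable in time.

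For the absorption term, \eqref{C1d} gives $|\mathfrak{C}'(\Z)\Phi|\le r|\Z|^{r-1}|\Phi|$. Hölder's inequality and $\V\hookrightarrow\wi\L^{6}$ (in 2D any $\wi\L^p$) then give
\[
|(\mathfrak{C}'(\Z)\Phi,(\mathcal{A}+\I)^{-1}\Phi)|\le rC\|\Z\|_{\wi\L^{3(r-1)}}^{r-1}\|\Phi\|_{\H}\|(\mathcal{A}+\I)^{-\frac12}\Phi\|_{\H},
\]
and by Young's inequality the coefficient $\|\Z\|_{\wi\L^{3(r-1)}}^{2(r-1)}$ is integrable. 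This is precisely the quantity already controlled in Cases I--III of Step I of Proposition \ref{supVnorm} via \eqref{unqA5}--\eqref{unqA6} and \eqref{eqncont2}. Gronwall's inequality then yields \eqref{phinegatest1}, with a constant depending on $\|\z\|_{\V}$ and $\mathtt R$ through \eqref{eqncont2}.

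For \eqref{phinegatest2}, set $\Psi=\Phi_1-\Phi_2$. It solves
\[
\frac{\d\Psi}{\d s}+\mu\mathcal{A}\Psi+\alpha\Psi+\mathfrak{B}'(\Z_1)\Psi+\beta\mathfrak{C}'(\Z_1)\Psi=-\big(\mathfrak{B}'(\Z_1)-\mathfrak{B}'(\Z_2)\big)\Phi_2-\beta\big(\mathfrak{C}'(\Z_1)-\mathfrak{C}'(\Z_2)\big)\Phi_2.
\]
The left side is handled as above. On the right, $(\mathfrak{B}'(\Z_1)-\mathfrak{B}'(\Z_2))\Phi_2=\mathfrak{B}(\mathfrak{Z},\Phi_2)+\mathfrak{B}(\Phi_2,\mathfrak{Z})$ with $\mathfrak{Z}=\Z_1-\Z_2$, and paired with $(\mathcal{A}+\I)^{-1}\Psi$ it is bounded by
\[
C\|\Phi_2\|_{\H}\big(\|\mathfrak{Z}\|_{\wi\L^\infty}+\|\nabla\mathfrak{Z}\|_{\wi\L^4}\big)\|(\mathcal{A}+\I)^{-\frac12}\Psi\|_{\H}.
\]
By Young's inequality this is at most $\|(\mathcal{A}+\I)^{-\frac12}\Psi\|_{\H}^2+C\|\Phi_2\|_{\H}^2\|(\mathcal{A}+\I)\mathfrak{Z}\|_{\H}\|\mathfrak{Z}\|_{\V}$, up to interpolation exponents. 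The time integral of the last term is $\lesssim\sup_s\|\Phi_2\|_{\H}^2\,\|\mathfrak{Z}\|_{\mathrm{L}^2(\V_2)}\|\mathfrak{Z}\|_{\mathrm{L}^2(\V)}\lesssim\|\z_1-\z_2\|_{\V}^2$ by \eqref{supVnormest} and \eqref{phiengest1}.

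For the absorption difference, a second Taylor expansion gives pointwise $|(\mathfrak{C}'(\Z_1)-\mathfrak{C}'(\Z_2))\Phi_2|\le C(|\Z_1|+|\Z_2|)^{r-2}|\mathfrak{Z}||\Phi_2|$ for $r\ge2$. I would pair this with $(\mathcal{A}+\I)^{-1}\Psi\in\V_2\hookrightarrow\wi\L^\infty$, or with $\wi\L^6$, estimating $\mathfrak{Z}$ in $\wi\L^6$ or $\wi\L^\infty$ via \eqref{supVnormest} and $|\Z_i|^{r-2}$ via the mixed $\mathrm{L}^p_t\wi\L^q_x$ bounds. Squaring under Young's inequality produces terms quadratic in $\|\mathfrak{Z}\|$ at leading order. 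In 3D with $r$ near $5$, the interpolation between $\mathrm{L}^\infty_t\V$ and $\mathrm{L}^2_t\V_2$ for $\mathfrak{Z}$ can leave higher powers, which is presumably the source of the $\|\z_1-\z_2\|_{\V}^4$ term. Gronwall's inequality then concludes.

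\textbf{Main obstacle.} The exponent bookkeeping in this last term for $d=3$, $r\in(3,5]$ is the hardest step. $\Phi_2$ is only in $\mathrm{L}^\infty_t\H\cap\mathrm{L}^2_t\V$, while $|\Z_i|^{r-2}$ needs $\mathrm{L}^{r+1}_t\wi\L^{3(r+1)}$ control, so the Hölder indices in space and time must be balanced carefully. If that fails, the fallback is to put $\Phi_2$ in $\mathrm{L}^2_t\wi\L^6$ and $\mathfrak{Z}$ in $\mathrm{L}^\infty_t\wi\L^6$.
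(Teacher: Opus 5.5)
Completed with the fallback you mention for the absorption difference, your argument is correct and has the same structure as the paper's proof. Both pair with $(\mathcal{A}+\I)^{-1}\Phi$, resp.\ $(\mathcal{A}+\I)^{-1}\Psi$; both use the splitting $\mathfrak{B}'(\Z_1)\Phi_1-\mathfrak{B}'(\Z_2)\Phi_2=\mathfrak{B}'(\Z_1)\Psi+\mathfrak{B}'(\Z_1-\Z_2)\Phi_2$ and its $\mathfrak{C}'$ analogue; both use a mean-value bound for $\mathfrak{C}'(\Z_1)-\mathfrak{C}'(\Z_2)$ and then Gr\"onwall.

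\textbf{Convective terms.} This is where your route genuinely differs. The paper uses skew-symmetry to put all derivatives on $(\mathcal{A}+\I)^{-1}\Psi$ and applies Ladyzhenskaya to its gradient. It therefore needs only $\|\Z_1\|_{\wi\L^4}$ and $\|\Z_1-\Z_2\|_{\wi\L^4}$, i.e.\ $\mathrm{L}^\infty_t\V$ information. The price is the four-factor Young inequality in \eqref{bphi2}, which generates $\|\Phi_2\|_{\H}^4\|\Z_1-\Z_2\|_{\wi\L^4}^4$. That term, not the absorption term near $r=5$, is the origin of $\|\z_1-\z_2\|_{\V}^4$. You instead place $\Z$ and $\Z_1-\Z_2$ in $\wi\L^{\infty}$ and their gradients in $\wi\L^4$. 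This uses the $\mathrm{L}^2_t\D(\mathcal{A})$ parts of \eqref{eqncont2} and \eqref{supVnormest}, and gives $\sup_s\|\Phi_2\|_{\H}^2\int_\tau^T\|(\mathcal{A}+\I)(\Z_1-\Z_2)\|_{\H}^2\,\d\xi\lesssim\|\z_1-\z_2\|_{\V}^2$. That is a purely quadratic, hence stronger, bound.

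\textbf{Absorption difference.} Your fallback is exactly the paper's route (Appendix \ref{negnormest}). One uses H\"older with $\frac56=\frac12+\frac16+\frac16$ as in \eqref{T2est4.2}, and controls $(\mathcal{A}+\I)^{-1}\Psi$ in $\wi\L^6$ through $\|(\mathcal{A}+\I)^{-1}\Psi\|_{\V}=\|(\mathcal{A}+\I)^{-\frac12}\Psi\|_{\H}$. This needs $\Z_i$ only in $\wi\L^{2(r-2)}$, and $\V\hookrightarrow\wi\L^{2(r-2)}$ because $2(r-2)\le 6$ for $r\le 5$. The term then contributes $\sup_s\|\Z_1-\Z_2\|_{\V}^2\int_\tau^T\|\Phi_2\|_{\V}^2\,\d\xi$. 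So the main obstacle you anticipate does not arise, and no $\mathrm{L}^{r+1}_t\wi\L^{3(r+1)}_x$ bookkeeping is needed.

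\textbf{Constant.} In both arguments the constant in \eqref{phinegatest2} depends on $\|\x_2\|_{\H}$ through \eqref{phiengest1}; the paper's last display records this. The dependence cannot be avoided: take $\x_1=\x_2=\lambda\x$ and let $\lambda\to\infty$. You should state it explicitly.
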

  		
\begin{proof}
Let us take the inner product with $(\mathcal{A}+\I)^{-1}\Phi$ in the first equation of \eqref{phx1}, we get
\begin{align}\label{Phi1Phi2}
&\frac12\frac{\d}{\d s}\|(\mathcal{A}+\I)^{-\frac12}\Phi\|_{\H}^2+
\mu\|\Phi\|_{\H}^2+\alpha\|(\mathcal{A}+\I)^{-\frac12}\Phi\|_{\H}^2
\nonumber\\&=
\mu\|(\mathcal{A}+\I)^{-\frac12}\Phi\|_{\H}^2-
\big(\mathfrak{B}'(\Z)\Phi,(\mathcal{A}+\I)^{-1}\Phi\big)-
\beta\big(\mathfrak{C}'(\Z)\Phi,(\mathcal{A}+\I)^{-1}\Phi\big).
\end{align}
Let us now estimate the terms in the right hand side of \eqref{Phi1Phi2}. By using H\"older's, Ladyzhenskaya and Young's inequalities, we calculate
\begin{align}\label{bphi}
\left|\big(\mathfrak{B}'(\Z)\Phi,(\mathcal{A}+\I)^{-1}\Phi\big)\right|
&\leq
\left|\big(\mathfrak{B}(\Phi,(\mathcal{A}+\I)^{-1}\Phi),\Z\big)\right|+
\left|\big(\mathfrak{B}(\Z,(\mathcal{A}+\I)^{-1}\Phi),\Phi\big)\right|
\nonumber\\&\leq
2\|\Phi\|_{\H}\|\mathcal{A}^{\frac12}(\mathcal{A}+\I)^{-1}\Phi\|_{\wi\L^4}
\|\Z\|_{\wi\L^4}
\nonumber\\&\leq
2C\|\Phi\|_{\H}\|\mathcal{A}(\mathcal{A}+\I)^{-1}\Phi\|_{\H}^{\frac{d}{4}}
\|\mathcal{A}^{\frac12}(\mathcal{A}+\I)^{-1}\Phi\|_{\H}^{1-\frac{d}{4}}
\|\Z\|_{\wi\L^4}
\nonumber\\&\leq
\frac{\mu}{4}\|\Phi\|_{\H}^2+C
\|(\mathcal{A}+\I)^{-\frac12}\Phi\|_{\H}^2\|\Z\|_{\wi\L^4}^{\frac{8}{4-d}}.
\end{align}
%where $\kappa_1:=2^{\frac{8}{4-d}}\left(\frac{4+d}{2\mu}\right)^{\frac{4+d}{4-d}}
%\frac{4-d}{8}$. 
In \eqref{bphi}, we have used the fact that $\|\mathcal{A}^{\frac12}\cdot\|_{\H}\leq
\|(\mathcal{A}+\I)^{\frac12}\cdot\|_{\H}$. Similarly, we find
\begin{align}\label{cphi}
\left|\big(\mathfrak{C}'(\Z)\Phi,(\mathcal{A}+\I)^{-1}\Phi\big)\right|
&\leq\|\mathfrak{C}'(\Z)\Phi\|_{\wi\L^{\frac65}}
\|(\mathcal{A}+\I)^{-1}\Phi\|_{\wi\L^6}
\nonumber\\&\leq
r\||\Z|^{r-1}\Phi\|_{\wi\L^{\frac65}}\|(\mathcal{A}+\I)^{-1}\Phi\|_{\wi\L^6}
\nonumber\\&\leq
r\|\Z\|_{\wi\L^{3(r-1)}}^{r-1}\|\Phi\|_{\H}\|(\mathcal{A}+\I)^{-1}\Phi\|_{\wi\L^6}
\nonumber\\&\leq
\frac{\mu}{4}\|\Phi\|_{\H}^2+C
\|(\mathcal{A}+\I)^{-\frac12}\Phi\|_{\H}^2\|\Z\|_{\wi\L^{3(r-1)}}^{2(r-1)}.
\end{align}
 Combining \eqref{bphi}-\eqref{cphi} into \eqref{Phi1Phi2}, we obtain
\begin{align*}
&\frac12\frac{\d}{\d s}\|(\mathcal{A}+\I)^{-\frac12}\Phi\|_{\H}^2+
\frac{\mu}{2}\|\Phi\|_{\H}^2+\alpha\|(\mathcal{A}+\I)^{-\frac12}\Phi\|_{\H}^2
\nonumber\\&\leq
C\big[\|\Z\|_{\wi\L^4}^{\frac{8}{4-d}}+ \|\Z\|_{\wi\L^{3(r-1)}}^{2(r-1)}\big]\|(\mathcal{A}+\I)^{-\frac12}\Phi\|_{\H}^2.
\end{align*}
Integrating the above inequality  from $\tau$ to $s$ and then employing Gr\"onwall's inequality, we get
\begin{align}\label{bphicphi1}
&\|(\mathcal{A}+\I)^{-\frac12}\Phi(s)\|_{\H}^2+\mu\int_{\tau}^s \|\Phi(\zeta)\|_{\H}^2\d\zeta+2\alpha\int_{\tau}^s 
\|(\mathcal{A}+\I)^{-\frac12}\Phi(\zeta)\|_{\H}^2\d\zeta
\nonumber\\&\leq 
\|(\mathcal{A}+\I)^{-\frac12}\x\|_{\H}^2
\exp\bigg[C\int_{\tau}^s\|\Z(\zeta)\|_{\wi\L^4}^{\frac{8}{4-d}}
\d\zeta+
C\int_{\tau}^s\|\Z(\zeta)\|_{\wi\L^{3(r-1)}}^{2(r-1)}
\d\zeta\bigg],
\end{align}
for all $s\in[\tau,T]$. Using Proposition \ref{supVnorm}, for the values of $r$ given in Table \ref{Table1}, we have 
\begin{align}\label{bphicphi2}
\int_{\tau}^T\|\Z(\zeta)\|_{\wi\L^{3(r-1)}}^{2(r-1)}
\d\zeta
\leq C(T,\|\Z\|_{\mathrm{L}^{\infty}(\tau,T;\V)},
\|\Z\|_{\mathrm{L}^{r+1}(\tau,T;\wi\L^{3(r+1)})}).
\end{align} 
Combining \eqref{bphicphi1}-\eqref{bphicphi2} together yields \eqref{phinegatest1}.
Let us set $\Psi:=\Phi_1-\Phi_2$. Then, $\Psi$ satisfies in $\V^*+\wi\L^{\frac{r+1}{r}}$
\begin{equation}\label{phi1phi2}
\left\{
\begin{aligned}
\frac{\d\Psi(s)}{\d s}&+\mu\mathcal{A}\Psi(s)+
\big(\mathfrak{B}'(\Z_1(s))\Phi_1(s)-\mathfrak{B}'(\Z_2(s))\Phi_2(s)\big)+
\alpha\Psi(s)
\\&+
\beta\big(\mathfrak{C}'(\Z_1(s))\Phi_1(s)-\mathfrak{C}'(\Z_2(s))
\Phi_2(s)\big)
=\boldsymbol{0},   \\
\Psi(\tau)&=\x_1-\x_2\in\H,
\end{aligned}
\right.
\end{equation}
for a.e.  $s\in[\tau,T]$. On taking the inner product with $(\mathcal{A}+\I)^{-1}\Psi$ in the first equation of \eqref{phi1phi2}, we obtain
\begin{align}\label{phi2phi3}
&\frac12\frac{\d}{\d s}\|(\mathcal{A}+\I)^{-\frac12}\Psi\|_{\H}^2+
\mu\|\Psi\|_{\H}^2+\alpha\|(\mathcal{A}+\I)^{-\frac12}\Psi\|_{\H}^2
\nonumber\\&=
\mu\|(\mathcal{A}+\I)^{-\frac12}\Psi\|_{\H}^2
-\underbrace{\big(\mathfrak{B}'(\Z_1)\Phi_1-\mathfrak{B}'(\Z_2)\Phi_2,
(\mathcal{A}+\I)^{-1}\Psi\big)}_{:=\mathcal{T}_3}
\nonumber\\&\quad-
\beta\underbrace{\big(\mathfrak{C}'(\Z_1)\Phi_1-\mathfrak{C}'(\Z_2)\Phi_2,
(\mathcal{A}+\I)^{-1}\Psi\big)}_{:=\mathcal{T}_4}.
\end{align}
We now estimate $\mathcal{T}_3$ and $\mathcal{T}_4$, separately. By the properties of bilinear operator along with H\"older's, Ladyzhenskaya's and Young's inequalities, we estimate $\mathcal{T}_3$ as
\begin{align}\label{bphi2}
\big|\mathcal{T}_3\big|&\leq
\big|\big(\mathfrak{B}'(\Z_1)\Psi,(\mathcal{A}+\I)^{-1}\Psi\big)\big|+
\big|\big(\mathfrak{B}'(\Z_1-\Z_2)\Phi_2,(\mathcal{A}+\I)^{-1}\Psi\big)\big|
\nonumber\\&\leq
2\|\Psi\|_{\H}\|\mathcal{A}^{\frac12}(\mathcal{A}+\I)^{-1}\Psi\|_{\wi\L^4}
\|\Z_1\|_{\wi\L^4}+
2\|\Phi_2\|_{\H}\|\mathcal{A}^{\frac12}(\mathcal{A}+\I)^{-1}\Psi\|_{\wi\L^4}
\|\Z_1-\Z_2\|_{\wi\L^4}
\nonumber\\&\leq
\underbrace{2C\|\Psi\|_{\H}
\|\mathcal{A}(\mathcal{A}+\I)^{-1}\Psi\|_{\H}^{\frac{d}{4}}
\|\mathcal{A}^{\frac12}(\mathcal{A}+\I)^{-1}\Psi\|_{\H}^{1-\frac{d}{4}}
\|\Z_1\|_{\wi\L^4}}_{\text{Young's inequality with exponents } \frac{8}{4-d} \ \text{ and } \frac{8}{4+d} \ }
\nonumber\\&\quad+
\underbrace{2C\|\Phi_2\|_{\H}
\|\mathcal{A}(\mathcal{A}+\I)^{-1}\Psi\|_{\H}^{\frac{d}{4}}
\|\mathcal{A}^{\frac12}(\mathcal{A}+\I)^{-1}\Psi\|_{\H}^{1-\frac{d}{4}}
\|\Z_1-\Z_2\|_{\wi\L^4}}_{\text{Young's inequality with exponents } \frac{8}{d}, \ \frac{8}{4-d}, \ 4 \text{ and } 4}
\nonumber\\&\leq
\frac{\mu}{4}\|\Psi\|_{\H}^2+C
\|(\mathcal{A}+\I)^{-\frac12}\Psi\|_{\H}^2\|\Z_1\|_{\wi\L^4}^{\frac{8}{4-d}}+
C\|(\mathcal{A}+\I)^{-\frac12}\Psi\|_{\H}^2+C
\|\Phi_2\|_{\H}^4\|\Z_1-\Z_2\|_{\wi\L^4}^4.
\end{align}
%where $\kappa_3:=2^{\frac{8}{4-d}}\left(\frac{4+d}{\mu}\right)^{\frac{4+d}{4-d}}
%\frac{4-d}{8}$, and $\kappa_4,\kappa_5$ are constants depending on $\mu$.
 From \eqref{T2est6} (see Appendix \ref{negnormest}), we estimate $\mathcal{T}_4$ as
\begin{align}\label{cphi2}
\big|\mathcal{T}_4\big|&\leq
\frac{\mu}{4}\|\Psi\|_{\H}^2+C
\|(\mathcal{A}+\I)^{-\frac12}\Psi\|_{\H}^2\|\Z_1\|_{\wi\L^{3(r-1)}}^{2(r-1)}+
C\|\Phi_2\|_{\V}^2\|\Z_1-\Z_2\|_{\V}^2
\nonumber\\&\quad+
C
\||\Z_1|+|\Z_2|\|_{\wi\L^{2(r-2)}}^{2(r-2)}\|(\mathcal{A}+\I)^{-\frac12}\Psi\|_{\H}^2.
\end{align}
On substituting \eqref{bphi2}-\eqref{cphi2} into \eqref{phi2phi3}, we deduce 
\begin{align*}
&\frac12\frac{\d}{\d s}\|(\mathcal{A}+\I)^{-\frac12}\Psi\|_{\H}^2+
\frac{\mu}{2}\|\Psi\|_{\H}^2+\alpha\|(\mathcal{A}+\I)^{-\frac12}\Psi\|_{\H}^2
\nonumber\\&\leq
\mu\|(\mathcal{A}+\I)^{-\frac12}\Psi\|_{\H}^2+
C
\|(\mathcal{A}+\I)^{-\frac12}\Psi\|_{\H}^2\|\Z_1\|_{\wi\L^4}^{\frac{8}{4-d}}+
C\|(\mathcal{A}+\I)^{-\frac12}\Psi\|_{\H}^2+C
\|\Phi_2\|_{\H}^4\|\Z_1-\Z_2\|_{\wi\L^4}^4
\nonumber\\&\quad+
C
\|(\mathcal{A}+\I)^{-\frac12}\Psi\|_{\H}^2\|\Z_1\|_{\wi\L^{3(r-1)}}^{2(r-1)}+
C\|\Phi_2\|_{\V}^2\|\Z_1-\Z_2\|_{\V}^2
\nonumber\\&\quad+
C\||\Z_1|+|\Z_2|\|_{\wi\L^{2(r-2)}}^{2(r-2)} \|(\mathcal{A}+\I)^{-\frac12}\Psi\|_{\H}^2.
\end{align*}
On integrating above from $\tau$ to $s$ and then employing Gr\"onwall's inequality, we arrive at 
\begin{align*}
&\|(\mathcal{A}+\I)^{-\frac12}\Psi(s)\|_{\H}^2+
\mu\int_{\tau}^s\|\Psi(\zeta)\|_{\H}^2\d\zeta+
2\alpha\int_{\tau}^s\|(\mathcal{A}+\I)^{-\frac12}\Psi(\zeta)\|_{\H}^2
\d\zeta
\nonumber\\&\leq
\bigg[\|(\mathcal{A}+\I)^{-\frac12}(\x_1-\x_2)\|_{\H}^2+2C
\int_{\tau}^s\|\Phi_2(\zeta)\|_{\H}^4
\|\Z_1(\zeta)-\Z_2(\zeta)\|_{\wi\L^4}^4
\nonumber\\&\qquad+2C
\int_{\tau}^s\|\Phi_2(\zeta)\|_{\V}^2
\|\Z_1(\zeta)-\Z_2(\zeta)\|_{\V}^2\d\zeta\bigg]
\nonumber\\&\quad\times
\exp\left(2\mu T+2C T+2C \int_{\tau}^s\|\Z_1(\zeta)\|_{\wi\L^4}^{\frac{8}{4-d}}\d\zeta+
2C\int_{\tau}^s
\|\Z_1(\zeta)\|_{\wi\L^{3(r-1)}}^{2(r-1)}\d\zeta\right.\\&\qquad\left.+
C\int_{\tau}^s 
\||\Z_1(\zeta)|+|\Z_2(\zeta)|\|_{\wi\L^{2(r-2)}}^{2(r-2)}
\d\zeta\right),
\end{align*}
for all $s\in[\tau,T]$. Note that $\V\hookrightarrow\wi\L^{2(r-2)}$ for $3\leq r\leq5$. Therefore, in view of Proposition \ref{supVnorm} and \eqref{bphicphi2}, the above inequality yields
\begin{align*}
&\|(\mathcal{A}+\I)^{-\frac12}\Psi(s)\|_{\H}^2+
\mu\int_{\tau}^s\|\Psi(\zeta)\|_{\H}^2\d\zeta+
\nonumber\\&\leq
C(\mu,\alpha,\beta,T,\|\z_1\|_{\V},\|\z_2\|_{\V},\|\x_1\|_{\H},\|\x_2\|_{\H})
\nonumber\\&\quad\times
\bigg[\|(\mathcal{A}+\I)^{-\frac12}(\x_1-\x_2)\|_{\H}^2+
\|\z_1-\z_2\|_{\V}^4+
\|\z_1-\z_2\|_{\V}^2\bigg],
\end{align*}
for all $s\in[\tau,T]$.
  \end{proof}

The proof of the following proposition proceeds along the same lines as the proof of Proposition \ref{phinegatest} and is therefore omitted. 
\begin{proposition}\label{Xnegatest}
The weak solution $\X$ to the linearized system \eqref{phx2} satisfies the following energy estimate for the values of $r$ given  in Table \ref{Table1}:
\begin{align}\label{Xnegatest1}
\sup\limits_{s\in[\tau,T]}\|(\mathcal{A}+\I)^{-\frac12}\X(s)\|_{\H}^2+
\int_{\tau}^T \|\X(\xi)\|_{\H}^2\d\xi
\leq 
C\big(\mu,\alpha,\beta,T,\|\z\|_{\V}\big).
\end{align}
Moreover, if $\X_1$ and $\X_2$ are two weak solutions of the system \eqref{phx2} with 
\begin{equation}\label{Xinitialdata}
\left\{
\begin{aligned}
\X_1(\tau):=\mu\mathcal{A}\z_1+\mathfrak{B}(\z_1)+\alpha\z_1+
\beta\mathfrak{C}(\z_1)-\u(\tau),\\
\X_2(\tau):=\mu\mathcal{A}\z_1+\mathfrak{B}(\z_1)+\alpha\z_1+
\beta\mathfrak{C}(\z_1)-\u(\tau).
\end{aligned}
\right.
\end{equation}
Then, we have
{\small\begin{align}\label{Xnegatest2}
&\sup\limits_{s\in[\tau,T]}
\|(\mathcal{A}+\I)^{-\frac12}(\X_1-\X_2)(s)\|_{\H}^2+
\int_{\tau}^T \|(\X_1-\X_2)(\xi)\|_{\H}^2\d\xi
\nonumber\\&\leq 
C\big(\mu,\alpha,\beta,T,\|\z_1\|_{\V},\|\z_2\|_{\V}\big)
\|\z_1-\z_2\|_{\V}.
\end{align}
}
\end{proposition}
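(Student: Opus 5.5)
The plan is to repeat the duality argument used for Proposition \ref{phinegatest}, this time for the system \eqref{phx2}. The estimate \eqref{Xnegatest1} should follow directly from \eqref{bphicphi1}, since $\X$ solves exactly the same linear equation as $\Phi$ and differs only in its initial datum. Test the first equation of \eqref{phx2} with $(\mathcal{A}+\I)^{-1}\X$. The convective term is handled as in \eqref{bphi}, and the absorption term as in \eqref{cphi}. Gr\"onwall's inequality then gives
\begin{align*}
\sup_{s\in[\tau,T]}\|(\mathcal{A}+\I)^{-\frac12}\X(s)\|_{\H}^2+\int_\tau^T\|\X(\xi)\|_{\H}^2\d\xi\leq C\|(\mathcal{A}+\I)^{-\frac12}\X(\tau)\|_{\H}^2,
\end{align*}
where $C$ is controlled through \eqref{bphicphi2} and \eqref{eqncont2}. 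It remains to bound the initial datum. Since $\|(\mathcal{A}+\I)^{-\frac12}\cdot\|_{\H}\leq\|\cdot\|_{\V^*}$, I will estimate each piece of $\X(\tau)$ in $\V^{*}$:
\begin{itemize}
\item $\|\mathcal{A}\z\|_{\V^*}\leq\|\z\|_{\V}$;
\item $\|\mathfrak{B}(\z)\|_{\V^*}\leq C\|\z\|_{\wi\L^4}^2\leq C\|\z\|_{\V}^2$;
\item $\|\mathfrak{C}(\z)\|_{\V^*}\leq C\|\mathfrak{C}(\z)\|_{\wi\L^{\frac{r+1}{r}}}\leq C\|\z\|_{\V}^r$, using $\V\hookrightarrow\wi\L^{r+1}$ for $r$ in Table \ref{Table1};
\item $\|\u(\tau)\|_{\H}\leq\mathtt{R}$.
\end{itemize}
This explains why only $\|\z\|_{\V}$, and not $\|\mathcal{A}\z\|_{\H}$, appears in the constant.

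For \eqref{Xnegatest2}, set $\Psi:=\X_1-\X_2$. Here $\X_i$ solves \eqref{phx2} with coefficient $\Z_i$, and I read the second datum in \eqref{Xinitialdata} with $\z_2$ in place of $\z_1$, with the same control. Then $\Psi$ satisfies \eqref{phi1phi2}, with $\Phi_i$ replaced by $\X_i$. Testing with $(\mathcal{A}+\I)^{-1}\Psi$, $\mathcal{T}_3$ and $\mathcal{T}_4$ are estimated verbatim as in \eqref{bphi2}--\eqref{cphi2}. This produces the forcing terms $\|\X_2\|_{\H}^4\|\Z_1-\Z_2\|_{\wi\L^4}^4$ and $\|\X_2\|_{\V}^2\|\Z_1-\Z_2\|_{\V}^2$, whose time integrals are bounded by $C\|\z_1-\z_2\|_{\V}^2$ through three ingredients:
\begin{itemize}
\item \eqref{supVnormest};
\item \eqref{Xengest1}, which gives $\X_2\in\mathrm{L}^\infty(\tau,T;\H)\cap\mathrm{L}^2(\tau,T;\V)$;
\item $\|\X_2\|_{\H}^4\leq\sup_s\|\X_2\|_{\H}^2\,\|\X_2\|_{\H}^2$, which is integrable.
\end{itemize}
Gr\"onwall's inequality then reduces everything to bounding the initial difference $\|(\mathcal{A}+\I)^{-\frac12}(\X_1(\tau)-\X_2(\tau))\|_{\H}^2$. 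The control terms cancel there.

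The main step is the estimate of this initial difference in $\V^*$:
\begin{itemize}
\item The Stokes part gives $\mu\|\mathcal{A}(\z_1-\z_2)\|_{\V^*}\leq\mu\|\z_1-\z_2\|_{\V}$.
\item The convective part gives $\|\mathfrak{B}(\z_1)-\mathfrak{B}(\z_2)\|_{\V^*}\leq C(\|\z_1\|_{\V}+\|\z_2\|_{\V})\|\z_1-\z_2\|_{\V}$, by bilinearity and Ladyzhenskaya's inequality.
\item The absorption part uses the Taylor bound \eqref{Ctayest} together with H\"older's inequality in $\wi\L^{\frac{r+1}{r}}$. This gives $\|\mathfrak{C}(\z_1)-\mathfrak{C}(\z_2)\|_{\V^*}\leq C(\|\z_1\|_{\V}+\|\z_2\|_{\V})^{r-1}\|\z_1-\z_2\|_{\V}$, again requiring $\V\hookrightarrow\wi\L^{r+1}$, which is where $r\leq5$ is used in $d=3$.
\end{itemize}
Hence the squared initial difference is $\leq C\|\z_1-\z_2\|_{\V}^2$. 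Combining with the forcing bounds gives a right-hand side $C(\|\z_1-\z_2\|_{\V}^2+\|\z_1-\z_2\|_{\V}^4)$. On bounded sets of $\V$ this is $\leq C\|\z_1-\z_2\|_{\V}$, as stated, since $\|\z_1-\z_2\|_{\V}$ is bounded by $\|\z_1\|_{\V}+\|\z_2\|_{\V}$.

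I expect the delicate point to be the $\mathcal{T}_4$ term. There the Appendix estimate \eqref{cphi2} requires $\|\X_2\|_{\V}$ to be square-integrable in time and $\||\Z_1|+|\Z_2|\|_{\wi\L^{2(r-2)}}$ to be controlled. Both follow from \eqref{Xengest1} and $\V\hookrightarrow\wi\L^{2(r-2)}$ for $3\le r\le5$. However, \eqref{Xengest1} depends on $\|\mathcal{A}\z\|_{\H}$, so the constant there must be taken on bounded sets of $\D(\mathcal{A})$, and I would make this dependence explicit.
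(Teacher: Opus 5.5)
Your proposal is correct and follows the route the paper intends. The paper omits the proof, saying it runs along the lines of Proposition~\ref{phinegatest}, and its subsequent remark bounds the initial datum (and the difference of initial data) in $\V^{*}\cong\D(\mathcal{A}+\I)^{-\frac12}$ exactly as you do; your reading of $\X_2(\tau)$ with $\z_2$ in place of $\z_1$ is the evident intended one. Your caveat is also well founded: the forcing terms $\|\X_2\|_{\H}^4\|\Z_1-\Z_2\|_{\wi\L^4}^4$ and $\|\X_2\|_{\V}^2\|\Z_1-\Z_2\|_{\V}^2$ are controlled only through \eqref{Xengest1}, so the constant this argument gives in \eqref{Xnegatest2} depends on $\|\mathcal{A}\z_2\|_{\H}$ and not only on the $\V$-norms as stated (just as the proof of Proposition~\ref{phinegatest} ends with a constant depending on $\|\x_1\|_{\H}$ and $\|\x_2\|_{\H}$).
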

\begin{remark}
	Note that the the right hand side of \eqref{Xnegatest1} depends only on the norm $\|\z\|_{\V}$.  This dependence mainly arises from the term $\|(\mathcal{A}+\I)^{-\frac12}(\mu\mathcal{A}\z+\mathfrak{B}(\z)+\alpha\z+
	\mathfrak{C}(\z))\|_{\H}$. Indeed,  by using the fact that (\cite[Chapter II, pp. 57]{Te3})
	\begin{align*}
	\big(\D(\mathcal{A}+\I)^{\frac12}\big)^{*}\cong
	\D(\mathcal{A}+\I)^{-\frac12} \ \text{ and } \ 
	\D(\mathcal{A}+\I)^{\frac12}\cong\V,
	\end{align*}
	we obtain $\D(\mathcal{A}+\I)^{-\frac12}\cong\V^{*}$. Therefore, we find
	\begin{align*}
		\|(\mathcal{A}+\I)^{-\frac12}\mathfrak{B}(\z)\|_{\H}=
		\|\mathfrak{B}(\z)\|_{\V^{*}}\leq C\|\z\|_{\V}^2,
	\end{align*}
	and for $r\in[3,5]$, we have
	\begin{align*}
	\|(\mathcal{A}+\I)^{-\frac12}\mathfrak{C}(\z)\|_{\H}=
	\|\mathfrak{C}(\z)\|_{\V^{*}}\leq C\||\z|^{r-1}\z\|_{\wi\L^{\frac{r+1}{r}}}\leq C\|\z\|_{\V}^{r}.
	\end{align*}
	Combining the above estimates, we obtain
	\begin{align*}
		\|(\mathcal{A}+\I)^{-\frac12}(\mu\mathcal{A}\z+\mathfrak{B}(\z)+\alpha\z+
		\mathfrak{C}(\z))\|_{\H}
		\leq C(\mu,\alpha,\beta,\|\z\|_{\V}).
	\end{align*}
	Using similar arguments, one can deduce the estimate in \eqref{Xnegatest2}. In particular, applying analogous estimates to the difference of the nonlinear terms yields
	\begin{align*}
	&\|(\mathcal{A}+\I)^{-\frac12}\big((\mu\mathcal{A}\z+\mathfrak{B}(\z)+\alpha\z+
	\mathfrak{C}(\z))-(\mu\mathcal{A}\z+\mathfrak{B}(\z)+\alpha\z+
	\mathfrak{C}(\z))\big)\|_{\H}
	\nonumber\\&\leq 
	C\big(\mu,\alpha,\beta,T,\|\z_1\|_{\V},\|\z_2\|_{\V}\big)
	\|\z_1-\z_2\|_{\V}.
	\end{align*}
\end{remark}
 We now establish the Lipschitz property of the functional $\mathcal{W}$ defined in \eqref{wfunc}. 
  	\begin{proposition}\label{WpropertyLip}
  		The functional $\mathcal{W}$ defined in \eqref{wfunc} satisfies following properties:
  		
  		(i) $\mathcal{W}:[t,T]\times\H\to\R$ is locally Lipschitz. 
  		
  		(ii) For a.e. $\tau\in[t,T]$, $\mathcal{W}_{\tau}(\tau,\cdot):\V\to\R$ is locally Lipschitz.
  		
  		(iii) For all $\tau\in[t,T]$ and all $\z\in\V$, the Fr\'echet derivative $\mathcal{D}_{\z}\mathcal{W}(\tau,\z)\in\V$. Also, the Fr\'echet derivative $\mathcal{D}_{\z}\mathcal{W}(\tau,\cdot):\V\to\V$ is locally Lipschitz.
  	\end{proposition}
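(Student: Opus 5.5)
Throughout, write $\Z(\cdot)=\Z(\cdot;\tau,\z,\wi\u)$ and $\Z_i(\cdot)=\Z(\cdot;\tau,\z_i,\wi\u)$. The plan is to reduce all three properties to the continuous dependence estimates of Proposition \ref{supVnorm} and the linearized estimates of Propositions \ref{phinegatest}--\ref{Xnegatest}. The key tool is an explicit representation of the derivatives of $\mathcal{W}$ in terms of $\Phi$, $\X$ and an adjoint state.

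\textbf{Step (i).} For fixed $\tau$ and $\z_1,\z_2$ with $\|\z_i\|_{\V}\le\mathpzc{R}$, I would write
\begin{align*}
\mathcal{W}(\tau,\z_1)-\mathcal{W}(\tau,\z_2)
=\frac12\big(\Z_1(T)+\Z_2(T),\Z_1(T)-\Z_2(T)\big)
+\frac12\int_\tau^T\big(\nabla(\Z_1+\Z_2),\nabla(\Z_1-\Z_2)\big)\d\xi .
\end{align*}
The terminal term is bounded using \eqref{eqncont1} for the sum and the $\H$-energy estimate for the difference. That $\H$-estimate is standard: test \eqref{redstapdff1} with $\mathfrak{Z}$ and use \eqref{3.4} and \eqref{monoC2}, giving $\|\mathfrak{Z}\|_{\mathrm{L}^\infty(\H)}+\|\mathfrak{Z}\|_{\mathrm{L}^2(\V)}\le C\|\z_1-\z_2\|_{\H}$. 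For the integral term, I would integrate by parts, $(\nabla a,\nabla b)=(\mathcal{A}a,b)$, and use $\Z_i\in\mathrm{L}^2(\D(\mathcal{A}))$ from \eqref{eqncont2}, so that only $\int\|\Z_1-\Z_2\|_{\H}^2$ is needed. By \eqref{z12negsob} this even gives a bound by $\|\z_1-\z_2\|_{\V^*}$.

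For the time variable, I would use the flow property $\Z(\cdot;\tau_1,\z)=\Z(\cdot;\tau_2,\Z(\tau_2;\tau_1,\z))$. Then $\mathcal{W}(\tau_1,\z)-\mathcal{W}(\tau_2,\z)$ splits into a running cost over $[\tau_1,\tau_2]$, which is $O(|\tau_1-\tau_2|)$ by \eqref{eqncont2} and $\|\wi\u\|\le\mathtt{R}$, plus $\mathcal{W}(\tau_2,\Z(\tau_2;\tau_1,\z))-\mathcal{W}(\tau_2,\z)$. The latter is controlled by the spatial Lipschitz bound times $\|\Z(\tau_2)-\z\|_{\H}\le C|\tau_1-\tau_2|$. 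This last bound uses $\frac{\d\Z}{\d s}\in\mathrm{L}^\infty(\H)$ for $\z\in\D(\mathcal{A})$, from Proposition \ref{extregu}.

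\textbf{Step (iii).} Differentiating \eqref{wfunc} in $\z$ in direction $\x$ gives $\langle\mathcal{D}_{\z}\mathcal{W},\x\rangle=(\Z(T),\Phi(T))+\int_\tau^T(\mathcal{A}\Z,\Phi)\d\xi$, with $\Phi$ solving \eqref{phx1}. Fr\'echet differentiability follows by showing that the remainder $\Z(\cdot;\z+\x)-\Z(\cdot;\z)-\Phi$ is $o(\|\x\|)$. To do this, subtract the equations, use the Taylor formula \eqref{Ctayest} for $\mathfrak{C}$ and the quadratic structure of $\mathfrak{B}$, and apply the linear estimate \eqref{fengest} with forcing quadratic in $\Z_1-\Z_2$. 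I would then introduce the adjoint $\p$ solving \eqref{stapadj} with $\p_T=\Z(T)$ and $\mathfrak{f}_2=\mathcal{A}\Z\in\mathrm{L}^2(\H)$. Duality between \eqref{phx1} and \eqref{stapadj} gives $\mathcal{D}_{\z}\mathcal{W}(\tau,\z)=\p(\tau)$. To get $\p(\tau)\in\V$, I would redo Theorem \ref{soLvaadj} at the $\V$ level: test \eqref{stapadj} with $(\mathcal{A}+\I)\p$, which needs $\p_T=\Z(T)\in\V$ and $\mathfrak{f}_2\in\mathrm{L}^2(\H)$. The terms $((\mathfrak{B}'(\Z))^*\p,\mathcal{A}\p)$ and $(\mathfrak{C}'(\Z)\p,\mathcal{A}\p)$ are handled by Agmon's inequality and the same $\wi\L^{3(r-1)}$ integrability used in Proposition \ref{supVnorm}. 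Local Lipschitz continuity of $\z\mapsto\p(\tau)$ in $\V$ is then proved like \eqref{supVnormest}: take the difference $\p_1-\p_2$, treat the coefficient differences $\mathfrak{M}'(\Z_1)-\mathfrak{M}'(\Z_2)$ as forcing bounded by $\|\Z_1-\Z_2\|_{\V}$, and apply Gr\"onwall. Alternatively, dualize Proposition \ref{phinegatest}: \eqref{phinegatest2} controls $\Phi_1-\Phi_2$ in $\V^*$, and hence controls $\p_1-\p_2$ in $\V$.

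\textbf{Step (ii).} I would represent $\mathcal{W}_\tau(\tau,\z)=-\frac12\|\nabla\z\|_{\H}^2-\frac12\|\wi\u(\tau)\|_{\H}^2-\langle\mathcal{D}_{\z}\mathcal{W}(\tau,\z),\mu\mathcal{A}\z+\mathfrak{B}(\z)+\alpha\z+\beta\mathfrak{C}(\z)-\wi\u(\tau)\rangle$, valid at Lebesgue points $\tau$ of $\wi\u$. Equivalently, it can be written through $\X$ from \eqref{phx2}, using \eqref{Xnegatest1}--\eqref{Xnegatest2}. Since $\mathcal{D}_{\z}\mathcal{W}\in\V$ is locally Lipschitz by (iii) and the bracket is locally Lipschitz from $\V$ into $\V^*$, as in the remark after Proposition \ref{Xnegatest}, the pairing is locally Lipschitz on $\V$.

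\textbf{Main obstacle.} I expect the hardest step to be the $\V$-regularity and $\V$-Lipschitz dependence of the adjoint $\p$ in (iii) for $d=3$, $r\in(3,5]$. The term $(\mathfrak{C}'(\Z)\p,\mathcal{A}\p)$ requires $\int\|\Z\|_{\wi\L^{3(r-1)}}^{2(r-1)}<\infty$, which is exactly the interpolation \eqref{unqA5}. The difference estimate additionally needs the bound $|\mathfrak{C}'(\Z_1)-\mathfrak{C}'(\Z_2)|\le C(|\Z_1|+|\Z_2|)^{r-2}|\Z_1-\Z_2|$, as in the appendix estimate \eqref{T2est6}.
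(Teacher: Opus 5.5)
Your proof is correct, but it follows the dual route to the paper's. The paper never uses the adjoint inside this proposition:
\begin{itemize}
\item For (iii), it rewrites $\langle\mathcal{D}_{\z}\mathcal{W}(\tau,\z),\x\rangle=((\mathcal{A}+\I)^{\frac12}\Z(T),(\mathcal{A}+\I)^{-\frac12}\Phi(T))+\int_\tau^T(\mathcal{A}\Z,\Phi)\,\mathrm{d}\xi$. It bounds this by $C\|\x\|_{\V^*}$ using the negative-norm estimate \eqref{phinegatest1} for the forward linearized equation, concludes $\mathcal{D}_{\z}\mathcal{W}\in\V$ by duality, and gets Lipschitz continuity from \eqref{phinegatest2}.
\item For (ii), it argues the same way with $\X$ and Proposition \ref{Xnegatest}.
\item For (i), it uses the bounds \eqref{phx55} and \eqref{phx6} on $\mathcal{D}_{\z}\mathcal{W}$ and $\mathcal{W}_\tau$ together with a mean value theorem.
\end{itemize}

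You instead identify $\mathcal{D}_{\z}\mathcal{W}(\tau,\z)=\p(\tau)$ and prove $\V$-level regularity for the backward equation \eqref{stapadj}, with $\p_T=\Z(T)\in\V$ and source $\mathcal{A}\Z\in\mathrm{L}^2(\tau,T;\H)$. This requires a strong-solution estimate for the adjoint that the paper does not contain, but it pays off in two ways.
\begin{itemize}
\item The equation for $\p_1-\p_2$ contains no test direction. So $\|\p_1(\tau)-\p_2(\tau)\|_{\V}\le C\|\z_1-\z_2\|_{\V}$ follows directly from \eqref{supVnormest}. The paper's duality step instead needs the $\Psi$-estimate to be linear in $\|\x\|_{\V^*}$. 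As written, \eqref{phinegatest2} does not give this: it is not homogeneous in $\x$, and its proof produces constants depending on $\|\x_i\|_{\H}$. Your \emph{alternatively} remark would inherit that defect.
\item You also obtain $\p\in\C([\tau,T];\V)\cap\mathrm{L}^2(\tau,T;\D(\mathcal{A}))$, which is stronger than \eqref{adjregu}.
\end{itemize}

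Your reduction of (ii) to (iii) is the same identity as \eqref{phx66}, but it bypasses Proposition \ref{Xnegatest}. The sign of the pairing should be $+$:
$\mathcal{W}_\tau=\langle\mathcal{D}_{\z}\mathcal{W}(\tau,\z),\mu\mathcal{A}\z+\mathfrak{B}(\z)+\alpha\z+\beta\mathfrak{C}(\z)-\wi\u(\tau)\rangle-\frac12\|\nabla\z\|_{\H}^2-\frac12\|\wi\u(\tau)\|_{\H}^2$,
since $\partial_\tau\Z(s;\tau,\z)=\mathcal{D}_{\z}\Z(s;\tau,\z)\X(\tau)$. This does not affect the Lipschitz claim.

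In (i), your direct difference estimate plus the flow property avoids any need for Fr\'echet differentiability in $\H$ or a nonsmooth mean value theorem. To get Lipschitz continuity on $\H$-balls, bound the integral term by Cauchy--Schwarz with your $\mathrm{L}^2(\tau,T;\V)$ difference estimate rather than integrating by parts, since the latter needs $\V$-bounded data. The time part, in your version as in the paper's, genuinely requires $\z\in\D(\mathcal{A})$.
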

  	
  	\begin{proof}
  	We prove all three parts step by step as follows:
  	\vskip 0.25cm
  	\noindent
  	\textbf{Proof of (i).}
  	For $\x\in\H$, we find the Fr\'echet derivative of the functional \eqref{wfunc} with respect to the initial data $\z$ as follows:
  	\begin{align}\label{phx3}
  		(\mathcal{D}_{\z}\mathcal{W}(\tau,\z),\x)=
  		(\Z(T),\Phi(T))+\int_{\tau}^T (\nabla\Z(\xi),\nabla\Phi(\xi))\d \xi,
  	\end{align}
  	for all $\x\in\H$. We then find by using the Cauchy--Schwarz inequality
  	\begin{align}\label{phx4}
  	\big|(\mathcal{D}_{\z}\mathcal{W}(\tau,\z),\x)\big|
  	&\leq\|\Z(T)\|_{\H}\|\Phi(T)\|_{\H}+\|\Z\|_{\mathrm{L}^2(\tau,T;\V)}
  	\|\Phi\|_{\mathrm{L}^2(\tau,T;\V)}
  	\nonumber\\&\leq
  	\big(\|\Z(T)\|_{\H}^2+\|\Z\|_{\mathrm{L}^2(\tau,T;\V)}^2\big)^{\frac12}
  	\big(\|\Phi(T)\|_{\H}^2+\|\Phi\|_{\mathrm{L}^2(\tau,T;\V)}^2\big)^{\frac12}.
  	\end{align}
By utilizing the energy estimates (see Proposition \ref{weLLp} and \eqref{phiengest1}), it follows from \eqref{phx4} that
  	\begin{align}\label{phx5}
  		\big|(\mathcal{D}_{\z}\mathcal{W}(\tau,\z),\x)\big|
  		\leq
  	C=C(\mathtt{R},\|\z\|_{\H})\|\x\|_{\H}, \ \text{ for all } \ \x\in\H.
  	\end{align}
  	Thus, for all $\z\in\D(\mathcal{A})$ and for all $\tau\in[t,T]$, we have 
  	\begin{align}\label{phx55}
  	\mathcal{D}_{\z}\mathcal{W}(\tau,\z)\in\H \ \text{ with norm bound } \  \|\mathcal{D}_{\z}\mathcal{W}(\tau,\z)\|_{\H}\leq C(\mathtt{R},\|\z\|_{\H}).
  	\end{align}
  	Similarly, for a.e. $\tau\in[t,T]$, we find 
  	\begin{align}
  	\mathcal{W}_\tau(\tau,\z)&=
  	(\Z(T),\X(T))+\int_{\tau}^T (\nabla\Z(\xi),\nabla\X(\xi))\d \xi- \frac12\|\nabla\z\|_{\H}^2-\frac12\|\wi\u(\tau)\|_{\H}^2\label{phx66}
     \\&\leq
  	\big(\|\Z(T)\|_{\H}^2+\|\Z\|_{\mathrm{L}^2(\tau,T;\V)}^2\big)^{\frac12}
  	\big(\|\X(T)\|_{\H}^2+\|\X\|_{\mathrm{L}^2(\tau,T;\V)}^2\big)^{\frac12}
  	+\frac12\|\nabla\z\|_{\H}^2+\frac12\|\wi\u(\tau)\|_{\H}^2
  	\nonumber\\&\leq
  	\mathpzc{C}(\mu,\alpha,\beta,T, \|\mathcal{A}\z\|_{\H})
  	+\frac12\|\wi\u(\tau)\|_{\H}^2,\label{phx6}
  	\end{align}
  	for all $\z\in\D(\mathcal{A})$ and for all
  	$\wi\u\in\mathrm{L}^2(\tau,T;\H)$. Thus, we deduce that $\mathcal{W}_{\tau}(\tau,\z)\in\mathrm{L}^1(\tau,T)$. 
  	Therefore, by virtue of \eqref{phx55}-\eqref{phx6} and an application of the mean value theorem \cite[Theorem 9.3-1, pp. 739]{PGCnew}, the functional
  	$\mathcal{W}:[t,T]\times\H\to\R$ is locally Lipschitz.
  	\vskip 2mm
  	\noindent
  	\textbf{Proof of (ii).}
  	Using the self-adjointness of $(\mathcal{A}+\I)^{\frac12}$, it follows from \eqref{phx66} that
  	\begin{align}\label{Wtauder}
  		\mathcal{W}_{\tau}(\tau,\z)&=
  		((\mathcal{A}+\I)^{\frac12}\Z(T),(\mathcal{A}+\I)^{-\frac12}\X(T))+
  		\int_{\tau}^T \langle\mathcal{A}\Z(r),\X(r)\rangle\d r
  		\nonumber\\&\quad-\frac12\|\nabla\z\|_{\H}^2-\frac12\|\wi\u(\tau)\|_{\H}^2,
  	\end{align}
  	for a.e. $\tau\in[t,T]$. By using the energy estimates \eqref{eqncont2} and \eqref{Xnegatest1}, we further estimate 
  		\begin{align}\label{phx9}
  		|\mathcal{W}_{\tau}(\tau,\z)|&\leq
  		\big(\|(\mathcal{A}+\I)^{\frac12}\Z(T)\|_{\H}^2+
  		\|\mathcal{A}\Z\|_{\mathrm{L}^2(\tau,T;\H)}^2\big)^{\frac12}
  		\big(\|(\mathcal{A}+\I)^{-\frac12}\X(T)\|_{\H}^2
  		+\|\X\|_{\mathrm{L}^2(\tau,T;\H)}^2\big)^{\frac12}
  		\nonumber\\&\quad+\frac12\|\nabla\z\|_{\H}^2+\frac12\|\wi\u(\tau)\|_{\H}^2
  		\nonumber\\&\leq
  		C\big(\mu,\alpha,\beta,T,\|\z\|_{\V}\big)
  		+\frac12\|\nabla\z\|_{\H}^2+\frac12\|\wi\u(\tau)\|_{\H}^2
  		\nonumber\\&\leq
  		C\big(\mu,\alpha,\beta,T,\|\z\|_{\V},\mathtt{R}\big),
  	\end{align}
  	 which shows that for a.e. $\tau\in[t,T]$ and for all $\z\in\V$, $\mathcal{W}_{\tau}(\tau,\z)\in\R$. Let us now prove the locally Lipschitz property of $\mathcal{W}_{\tau}(\tau,\cdot)$ for a.e. $\tau\in[t,T]$. 
%  	 From \eqref{phx7} and \eqref{phx9}, we have shown that
%  	\begin{align*}
%  	\big(\mathcal{W}_{\tau}(\tau,\cdot),\mathcal{D}_{\z}\mathcal{W}(\tau,\cdot)\big):
%  	\V\times\V\to\R\times\V.
%  	\end{align*}
 Let $\Z_1(\cdot)$ and $\Z_2(\cdot)$ be two strong solutions of \eqref{stapocp} with $\Z_1(\tau)=\z_1$ and $\Z_2(\tau)=\z_2$. Let $\X_1$ and $\X_2$ be two weak solutions of the linearized system \eqref{phx2} along $\Z_1$ and $\Z_2$, respectively, with initial data given in \eqref{Xinitialdata}. In view of H\"older's inequality and the energy estimates \eqref{eqncont2}-\eqref{supVnormest} and \eqref{Xnegatest1}-\eqref{Xnegatest2}, we obtain from \eqref{Wtauder} that 
   \begin{align*}
 	&\mathcal{W}_{\tau}(\tau,\z_1)-\mathcal{W}_{\tau}(\tau,\z_2)
 	\nonumber\\&=
 	((\mathcal{A}+\I)^{\frac12}\Z_1(T),(\mathcal{A}+\I)^{-\frac12}\X_1(T))-
 	((\mathcal{A}+\I)^{\frac12}\Z_2(T),(\mathcal{A}+\I)^{-\frac12}\X_2(T))
 	\nonumber\\&\quad+
 	\int_{\tau}^T \big[\langle\mathcal{A}\Z_1(r),\X_1(r)\rangle-
 	\langle\mathcal{A}\Z_2(r),\X_2(r)\rangle\big]\d r-
 	\frac12\|\nabla\z_1\|_{\H}^2+\frac12\|\nabla\z_2\|_{\H}^2
 	\nonumber\\&\leq
 	\|(\mathcal{A}+\I)^{\frac12}(\Z_1(T)-\Z_2(T))\|_{\H}
 	\|(\mathcal{A}+\I)^{-\frac12}\X_1(T)\|_{\H}
 	\nonumber\\&\quad+
 	\|(\mathcal{A}+\I)^{\frac12}\Z_2(T)\|_{\H}
 	\|(\mathcal{A}+\I)^{-\frac12}(\X_1(T)-\X_2(T))\|_{\H}
 	\nonumber\\&\quad+
 	\|\mathcal{A}(\Z_1-\Z_2)\|_{\mathrm{L}^2(\tau,T;\H)}
 	\|\X_1\|_{\mathrm{L}^2(\tau,T;\H)}+
 	\|\mathcal{A}\Z_2\|_{\mathrm{L}^2(\tau,T;\H)}
 	\|\X_1-\X_2\|_{\mathrm{L}^2(\tau,T;\H)}\nonumber\\&\quad+
 	\frac12\|\nabla(\z_1-\z_2)\|_{\H}^2
 	\nonumber\\&\leq
 	 C\big(\mu,\alpha,\beta,T,\mathtt{R}, \|\z_1\|_{\V},\|\z_2\|_{\V}\big)
 	 \|\z_1-\z_2\|_{\V}.
 	\end{align*}
 	Thus, $\mathcal{W}_{\tau}(\tau,\cdot):\V\to\R$ is locally Lipschitz for a.e. $\tau\in[t,T]$.
 	\vskip 2mm
 	\noindent
 	\textbf{Proof of (iii).}
 	By using the fact that $(\mathcal{A}+\I)^{\frac12}$ is self adjoint, we rewrite \eqref{phx3} as
 	\begin{align}\label{phx333}
 		\langle\mathcal{D}_{\z}\mathcal{W}(\tau,\z),\x\rangle=
 		\langle(\mathcal{A}+\I)^{\frac12}\Z(T),(\mathcal{A}+\I)^{-\frac12}\Phi(T)\rangle
 		+\int_{\tau}^T \langle\mathcal{A}\Z(\zeta),\Phi(\zeta)\rangle\d \zeta.
 	\end{align}
 	On applying the Cauchy--Schwarz and H\"older's inequalities along with the energy estimates \eqref{phinegatest1}, we calculate
 	\begin{align}\label{phx7}
 	&	\big|\langle\mathcal{D}_{\z}\mathcal{W}(\tau,\z),\x\rangle\big|
 	\nonumber\\	&\leq
 		\|(\mathcal{A}+\I)^{\frac12}\Z(T)\|_{\H}\|(\mathcal{A}+\I)^{-\frac12}\Phi(T)\|_{\H}
 		+\|\mathcal{A}\Z\|_{\mathrm{L}^2(\tau,T;\H)}\|\Phi\|_{\mathrm{L}^2(\tau,T;\H)}
 		\nonumber\\&\leq
 		\big(\|(\mathcal{A}+\I)^{\frac12}\Z(T)\|_{\H}^2+
 		\|\mathcal{A}\Z\|_{\mathrm{L}^2(\tau,T;\H)}^2\big)^{\frac12}
 		\big(\|(\mathcal{A}+\I)^{-\frac12}\Phi(T)\|_{\H}^2
 		+\|\Phi\|_{\mathrm{L}^2(\tau,T;\H)}^2\big)^{\frac12}
 		\nonumber\\&\leq
 		C\big(\mathtt{R},\|\z\|_{\V}\big)
 		\|(\mathcal{A}+\I)^{-\frac12}\x\|_{\H}^2,
 	\end{align}
 	for all $\x\in\D(\mathcal{A}+\I)^{-\frac12}$. Consequently, for all $\tau\in[t,T]$ and all $\z\in\V$, we conclude 
 	\begin{align}\label{phx77}
 		\mathcal{D}_{\z}\mathcal{W}(\tau,\z)\in\V \ \text{ with norm bound } \ 
 		\|\mathcal{D}_{\z}\mathcal{W}(\tau,\z)\|_{\V}\leq
 		C\big(\mathtt{R},\|\z\|_{\V}\big).
 	\end{align}
 	Let us now establish the locally Lipschitz property of  the Fr\'echet derivative $\mathcal{D}_{\z}\mathcal{W}(\tau,\cdot):\V\to\V$. Let $\Z_1(\cdot)$ and $\Z_2(\cdot)$ be two strong solutions of \eqref{stapocp} with $\Z_1(\tau)=\z_1$ and $\Z_2(\tau)=\z_2$. Let $\Phi_1$ and $\Phi_2$ be two weak solutions of the linearized system \eqref{phx1} along $\Z_1$ and $\Z_2$, respectively, with same initial data $\Phi_1(\tau)=\Phi_2(\tau)=\x$.
 	Using H\"older's inequality along with the energy estimates \eqref{eqncont2}-\eqref{supVnormest} and \eqref{phinegatest1}-\eqref{phinegatest2}, we derive from \eqref{phx333} the following estimates:
 	\begin{align*}
 		&\langle\mathcal{D}_{\z}\mathcal{W}(\tau,\z_1)-\mathcal{D}_{\z}\mathcal{W}
 		(\tau,\z_2),\x\rangle
 		\nonumber\\&=
 		((\mathcal{A}+\I)^{\frac12}\Z_1(T),(\mathcal{A}+\I)^{-\frac12}\Phi_1(T))-
 		((\mathcal{A}+\I)^{\frac12}\Z_2(T),(\mathcal{A}+\I)^{-\frac12}\Phi_2(T))
 		\nonumber\\&\quad+
 		\int_{\tau}^T \big[\langle\mathcal{A}\Z_1(r),\Phi_1(r)\rangle-
 		\langle\mathcal{A}\Z_2(r),\Phi_2(r)\rangle\big]\d r
 		\nonumber\\&=
 		((\mathcal{A}+\I)^{\frac12}\Z_1(T)-(\mathcal{A}+\I)^{\frac12}\Z_2(T),
 		(\mathcal{A}+\I)^{-\frac12}\Phi_1(T))
 		\nonumber\\&\quad+
 		((\mathcal{A}+\I)^{\frac12}\Z_2(T),(\mathcal{A}+\I)^{-\frac12}\Phi_1(T)-
 		(\mathcal{A}+\I)^{-\frac12}\Phi_2(T))
 		\nonumber\\&\quad+
 		\int_{\tau}^T \big[\langle\mathcal{A}\Z_1(r)-\mathcal{A}\Z_2(r),
 		\Phi_1(r)\rangle+
 		\langle\mathcal{A}\Z_2(r),\Phi_1(r)-\Phi_2(r)\rangle\big]\d r
 		\nonumber\\&\leq
 		\|(\mathcal{A}+\I)^{\frac12}(\Z_1(T)-\Z_2(T))\|_{\H}
 		\|(\mathcal{A}+\I)^{-\frac12}\Phi_1(T)\|_{\H}
 		\nonumber\\&\quad+
 		\|(\mathcal{A}+\I)^{\frac12}\Z_2(T)\|_{\H}
 		\|(\mathcal{A}+\I)^{-\frac12}(\Phi_1(T)-\Phi_2(T))\|_{\H}
 		\nonumber\\&\quad+
 		\|\mathcal{A}(\Z_1-\Z_2)\|_{\mathrm{L}^2(\tau,T;\H)}
 		\|\Phi_1\|_{\mathrm{L}^2(\tau,T;\H)}+
 		\|\mathcal{A}\Z_2\|_{\mathrm{L}^2(\tau,T;\H)}
 		\|\Phi_1-\Phi_2\|_{\mathrm{L}^2(\tau,T;\H)}
 		\nonumber\\&\leq
 		\left(\|(\mathcal{A}+\I)^{\frac12}(\Z_1(T)-\Z_2(T))\|_{\H}^2+
 		\|\mathcal{A}(\Z_1-\Z_2)\|_{\mathrm{L}^2(\tau,T;\H)}^2\right)^{\frac12}
 		\nonumber\\&\qquad\times
 		\left(\|(\mathcal{A}+\I)^{-\frac12}\Phi_1(T)\|_{\H}^2+
 		\|\Phi_1\|_{\mathrm{L}^2(\tau,T;\H)}^2\right)^{\frac12}
 		\nonumber\\&\quad+
 		\left(\|(\mathcal{A}+\I)^{\frac12}\Z_2(T)\|_{\H}^2+
 		\|\mathcal{A}\Z_2\|_{\mathrm{L}^2(\tau,T;\H)}^2\right)^{\frac12}
 		\nonumber\\&\qquad\times
 		\left(\|(\mathcal{A}+\I)^{-\frac12}(\Phi_1(T)-\Phi_2(T))\|_{\H}^2+
 		\|\Phi_1-\Phi_2\|_{\mathrm{L}^2(\tau,T;\H)}^2\right)^{\frac12}
 		\nonumber\\&\leq 	
 		C\big(\mu,\alpha,\beta,T,\mathtt{R}, \|\z_1\|_{\V},\|\z_2\|_{\V}\big)
 		\|\z_1-\z_2\|_{\V}\|\x\|_{\D(\mathcal{A}+\I)^{-\frac12}},
 	\end{align*}
 	for all $\x\in\D((\mathcal{A}+\I)^{-\frac12})$. Since $\big(\D(\mathcal{A}+\I)^{-\frac12}\big)^*\cong\D(\mathcal{A}+\I)^{\frac12}\cong\V$, we deduce 
 	\begin{align*}
 		\|\mathcal{D}_{\z}\mathcal{W}(\tau,\z_1)-
 		\mathcal{D}_{\z}\mathcal{W}(\tau,\z_2)\|_{\V}
 		\leq C\big(\mu,\alpha,\beta,T,\mathtt{R}, \|\z_1\|_{\V},\|\z_2\|_{\V}\big)
 		\|\z_1-\z_2\|_{\V}.
 	\end{align*}
 	Thus, for all $\tau\in[t,T]$, $\mathcal{D}_{\z}\mathcal{W}(\tau,\cdot):\V\to\V$ is locally Lipschitz.
 \end{proof}
 
 	\section{Pontryagin maximum principle and verification results} \setcounter{equation}{0}\label{Pontryagin}
	We now turn to the derivation of the Pontryagin maximum principle and a
verification theorem for the optimal control problem. The key idea is to
compare the functional $\mathcal{W}$, introduced and analyzed in the previous
section, with the value function $\mathpzc{V}$, using the characterization of
$\mathpzc{V}$ as a viscosity solution of the HJB equation.
 \begin{theorem}\label{PMPpr}
 Let $\wi\u\in\mathrm{L}^2(t,T;\U)$ be the optimal control for the initial data $(t,\z)\in[0,T]\times\D(\mathcal{A})$ and let $\Z(\cdot)\in\C([t,T];\V)\cap\mathrm{L}^2(t,T;\D(\mathcal{A}))$ be the optimal trajectory of the system \eqref{stapocp}. Then, for a.e. $s\in[t,T]$, we have
 \begin{align}\label{pmp234}
 	\mathcal{F}(\Z(s),\mathcal{D}_{\z}\mathcal{W}(s,\Z(s)))= \wi{\mathcal{F}}(\Z(s),\mathcal{D}_{\z}\mathcal{W}(s,\Z(s)),\wi\u(s)),
 \end{align}
where $\mathcal{F}$ and $\wi{\mathcal{F}}$ are defined in \eqref{hamfunvar} and \eqref{pseudo}, respectively.  Moreover, if $\p(\cdot)\in\C([\tau,T];\H)\cap\mathrm{L}^2(\tau,T;\V)$ is the unique weak solution to the following adjoint (or backward) equation in $\V^{*}+\wi\L^{\frac{r+1}{r}}$:
\begin{equation}\label{adjequation}
	\left\{
	\begin{aligned}
		-\frac{\d\p(s)}{\d s}+ \mu\mathcal{A}\p(s)+\big(\mathfrak{B}'(\Z(s))\big)^{*}\p(s)+
		\alpha\p(s)+\beta\mathfrak{C}'(\Z(s))\p(s)&=
		\mathcal{A}\Z(s),   \\
		\p(T)&=\Z(T)\in\H.
	\end{aligned}
	\right.
\end{equation}
 for a.e.  $s\in[t,T]$. Then, we have $\p(s)=\mathcal{D}_{\z}\mathcal{W}(s,\Z(s))$ for a.e. $s\in[t,T]$.
Furthermore, the optimal control is obtained as $\wi\u(\cdot)=\upsigma(\p(\cdot))$, where $\upsigma$ is defined in \eqref{upsigma} and we obtain the regularity
\begin{align}\label{optreg}
	\wi\u(\cdot)\in\C([t,T];\H) \ \text{ and } \ 
	\Z(\cdot)\in\C([t,T];\D(\mathcal{A}))\cap\C^1((t,T);\H).
\end{align} 
 \end{theorem}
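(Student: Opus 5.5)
The plan follows Sritharan's comparison of $\mathcal{W}$ with the value function $\mathpzc{V}$, in four steps.

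\textbf{Step 1: $\mathcal{W}$ touches $\mathpzc{V}$ from above along the optimal trajectory.} By definition $\mathcal{W}(\tau,\z')\geq\mathpzc{V}(\tau,\z')$ for all $(\tau,\z')$, since $\wi\u$ restricted to $[\tau,T]$ is admissible. Combining the DPP \eqref{vdpp} with optimality of $\wi\u$ gives equality along the optimal path: $\mathcal{W}(s,\Z(s))=\mathpzc{V}(s,\Z(s))$ for every $s\in[t,T]$. Hence $\mathpzc{V}-\mathcal{W}$ attains a local (indeed global) maximum, equal to $0$, at each point $(s,\Z(s))$.

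\textbf{Step 2: extract the Hamiltonian identity \eqref{pmp234}.} By Proposition \ref{WpropertyLip}, $\mathcal{W}$ is locally Lipschitz, $\mathcal{D}_{\z}\mathcal{W}:\V\to\V$ is locally Lipschitz, and $\mathcal{W}_\tau(\tau,\cdot)$ is locally Lipschitz on $\V$. So $\mathcal{W}$ is an admissible test function of class $\mathscr{D}$.
\begin{itemize}
\item The subsolution inequality of Definition \ref{vaLuevarf}(i) at $(s,\Z(s))$ gives $-\mathcal{W}_s+\mathcal{F}(\Z(s),\mathcal{D}_{\z}\mathcal{W})\leq0$. This requires $\Z(s)\in\D(\mathcal{A})$, which holds for a.e. $s$ since $\Z\in\mathrm{L}^2(t,T;\D(\mathcal{A}))$.
\item Differentiating $\mathcal{W}$ along its own trajectory, where $s\mapsto\mathcal{W}(s,\Z(s))$ is absolutely continuous, gives the identity $\frac{\d}{\d s}\mathcal{W}(s,\Z(s))=-\frac12\|\nabla\Z(s)\|_{\H}^2-\frac12\|\wi\u(s)\|_{\H}^2$.
\item By the chain rule (justified via the $\V$-Lipschitz property of $\mathcal{D}_{\z}\mathcal{W}$, and $\frac{\d\Z}{\d s}\in\mathrm{L}^2(t,T;\H)$ paired against $\mathcal{D}_{\z}\mathcal{W}\in\V$), this identity equals $\mathcal{W}_s-\wi{\mathcal{F}}(\Z,\mathcal{D}_{\z}\mathcal{W},\wi\u)-\frac12[\|\nabla\Z\|_{\H}^2+\|\wi\u\|_{\H}^2]$ after rearranging. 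Hence $\mathcal{W}_s=\wi{\mathcal{F}}(\Z(s),\mathcal{D}_{\z}\mathcal{W},\wi\u(s))$ for a.e. $s$.
\item Combined with the subsolution inequality this yields $\mathcal{F}\leq\wi{\mathcal{F}}(\cdot,\cdot,\wi\u)$. The reverse inequality $\mathcal{F}\geq\wi{\mathcal{F}}$ is trivial since $\mathcal{F}$ is a supremum over $\U$. This proves \eqref{pmp234}.
\end{itemize}
The main obstacle is the rigorous justification of this chain rule and of the a.e. differentiability in $s$ of $\mathcal{W}(s,\Z(s))$. I would first prove it for smooth data using Proposition \ref{extregu}, then pass to the limit using the locally Lipschitz bounds of Proposition \ref{WpropertyLip}.

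\textbf{Step 3: identify the adjoint.} Set $\p(s):=\mathcal{D}_{\z}\mathcal{W}(s,\Z(s))$. By \eqref{phx3} and the flow property $\Z(T;s,\Z(s))=\Z(T)$, we have $(\p(s),\x)=(\Z(T),\Phi(T))+\int_s^T(\nabla\Z,\nabla\Phi)\d\xi$, where $\Phi$ solves \eqref{phx1} from time $s$ with $\Phi(s)=\x$. Now let $\q$ be the weak solution of \eqref{adjequation} given by Theorem \ref{soLvaadj}, with $\mathfrak{f}_2=\mathcal{A}\Z\in\mathrm{L}^2(t,T;\H)$. Pairing the linearized equation with $\q$ and the adjoint equation with $\Phi$, integrating over $[s,T]$, and using the duality of $\mathfrak{B}'$, its adjoint $(\mathfrak{B}'(\Z))^{*}$, and the symmetry of $\mathfrak{C}'$, we get
\begin{align*}
(\q(s),\x)=(\Z(T),\Phi(T))+\int_s^T(\mathcal{A}\Z,\Phi)\d\xi .
\end{align*}
The integration by parts is legitimate because $\Phi,\q\in\mathrm{L}^2(\V)$ with derivatives in $\mathrm{L}^2(\V^*)$. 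Comparing the two representations gives $\q(s)=\p(s)$ for all $s$.

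\textbf{Step 4: feedback law and regularity.} The maximization in \eqref{pmp234} is the maximization of $-(\u,\p)-\frac12\|\u\|_{\H}^2$ over the ball $\U$. This strictly concave problem has the unique maximizer $\upsigma(\p)$, so $\wi\u(s)=\upsigma(\p(s))$ for a.e. $s$. Since $\p\in\C([t,T];\H)$ and $\upsigma$ is Lipschitz on $\H$, we may take $\wi\u\in\C([t,T];\H)$. Finally, Proposition \ref{extregu}, applied with $\z\in\D(\mathcal{A})$ and $\wi\u$ continuous, gives $\Z\in\C([t,T];\D(\mathcal{A}))$ and $\frac{\d\Z}{\d s}\in\C([t,T];\H)$, which is \eqref{optreg}.
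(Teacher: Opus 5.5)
Your proposal is correct and follows essentially the same route as the paper. The steps line up one to one:
\begin{itemize}
\item $\mathpzc{V}-\mathcal{W}$ attains the value $0$ as a maximum along the optimal trajectory.
\item The subsolution inequality, combined with the identity for $\mathcal{W}_s$ obtained by differentiating $s\mapsto\mathcal{W}(s,\Z(s))$, yields \eqref{pmp234}.
\item $\mathcal{D}_{\z}\mathcal{W}(s,\Z(s))$ is identified with the adjoint state by pairing \eqref{adjequation} with $\Phi$ and comparing with \eqref{phx3}.
\item The feedback law and the regularity \eqref{optreg} then follow from $\upsigma$ and Proposition \ref{extregu}.
\end{itemize}
Your sign bookkeeping, namely $\mathcal{W}_s=\wi{\mathcal{F}}(\Z,\mathcal{D}_{\z}\mathcal{W},\wi\u)$ together with $-\mathcal{W}_s+\mathcal{F}\leq 0$, is the one consistent with Definition \ref{vaLuevarf}. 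The paper instead writes $\mathcal{W}_s+\wi{\mathcal{F}}=0$ and $\mathcal{W}_s+\mathcal{F}\leq 0$, but the logic is identical.
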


\begin{proof}
	The proof is divided into following number of steps:
	\vskip 0.2cm
	\noindent
	\textbf{Step I:} \emph{Proof of maximum condition \eqref{pmp234}.}
	By using the semigroup property of the solution $\Z(\cdot)$ to the system \eqref{stapocp}, we write
	\begin{align*}
		\Z(\zeta,s;\Z(s,\tau;\z;\wi\u);\wi\u)=\Z(\zeta,\tau;\z,\wi\u)  \ \text{ for } \ \tau\leq s\leq \zeta\leq T.
	\end{align*}
	Utilizing the above identity, the functional \eqref{wfunc} can be written as 
	\begin{align*}
		\mathcal{W}(s,\Z(s))=
	\frac12\|\Z(T)\|_{\H}^2+\frac12\int_s^T 
	\big[\|\nabla\Z(\zeta)\|_{\H}^2+\|\wi\u(\zeta)\|_{\H}^2\big]\d \zeta.
	\end{align*}
	Let $\delta s$ be the small increment in time $s$. Then, from above, we write
	\begin{align*}
	\mathcal{W}(s,\Z(s))&=
	\frac12\|\Z(T)\|_{\H}^2+\frac12\int_s^{s+\delta s}
	\big[\|\nabla\Z(\zeta)\|_{\H}^2+\|\wi\u(\zeta)\|_{\H}^2\big]\d \zeta
	\nonumber\\&\qquad+
	\frac12\int_{s+\delta s}^T
	\big[\|\nabla\Z(\zeta)\|_{\H}^2+\|\wi\u(\zeta)\|_{\H}^2\big]\d \zeta
	\nonumber\\&=
	\mathcal{W}(s+\delta s,\Z(s+\delta s))+
	\frac12\int_s^{s+\delta s}
	\big[\|\nabla\Z(\zeta)\|_{\H}^2+\|\wi\u(\zeta)\|_{\H}^2\big]\d \zeta.
	\end{align*} 
	On simplifying further, we get
	\begin{align}\label{pmp1}
	&\frac{1}{\delta s}\big(\mathcal{W}(s+\delta s,\Z(s)) -\mathcal{W}(s,\Z(s))\big)
	+\frac{1}{\delta s}\big(\mathcal{W}(s+\delta s,\Z(s+\delta s)) -\mathcal{W}(s+\delta s,\Z(s))\big)
	\nonumber\\&\quad+
	\frac{1}{2\delta s}\int_s^{s+\delta s}
	\big[\|\nabla\Z(\zeta)\|_{\H}^2+\|\wi\u(\zeta)\|_{\H}^2\big]\d \zeta=0.
	\end{align}
	Passing to the limit as $\delta s\to0$ and using the differentiability of $\mathcal{W}$ in \eqref{pmp1}, we obtain
	\begin{align}\label{pmp2}
	\mathcal{W}_s(s,\Z(s))+\bigg[\big(\mathcal{D}_{\z}\mathcal{W}(s,\Z(s),\Z_s\big)
	+\frac12\|\nabla\Z(s)\|_{\H}^2+\frac12\|\wi\u(s)\|_{\H}^2\bigg]=0 \ 
	\text{ for a.e. } \ s\in[\tau,T].
	\end{align}
	The bracketed expression on the left hand side of \eqref{pmp2} corresponds to the pseudo-Hamiltonian (see \eqref{pseudo}), hence, \eqref{pmp2} can be rewritten as
	\begin{align}\label{pmp222}
	\mathcal{W}_s(s,\Z(s))+
	\wi{\mathcal{F}}(\Z(s),\mathcal{D}_{\z}\mathcal{W}(s,\Z(s),\wi\u(s))=0
	 \ \text{ for a.e. } \ s\in[\tau,T].
	\end{align}
	Since, $\wi\u\in\mathrm{L}^2(t,T;\U)$ is an optimal control, the above equality also yields
	\begin{align}\label{hamfunvar1}
	\mathcal{W}_s(s,\Z(s))+\sup\limits_{\v\in\U}
	\wi{\mathcal{F}}(\Z(s),\mathcal{D}_{\z}\mathcal{W}(s,\Z(s),\v)\geq0.
\end{align}
 Thus, by the definition of Hamiltonian \eqref{hamfunvar}, we infer that
 \begin{align}\label{pmp22}
 \mathcal{W}_s(s,\Z(s))+
 \mathcal{F}(\Z(s),\mathcal{D}_{\z}\mathcal{W}(s,\Z(s))\geq0.
 \end{align}
Note that for $\tau\in[t,T]$ and $\z\in\D(\mathcal{A})$, from \eqref{vaLvar} and \eqref{wfunc}, we have 
\begin{align}\label{pmp3}
\mathcal{W}(\tau,\z)\geq\mathcal{V}(\tau,\z) \ \text{ and } \ 
\mathcal{W}(t,\z)=\mathcal{V}(t,\z),
\end{align}
where the last equality follows from the fact that $\wi\u$ is an optimal control for $\tau=t$. Moreover, if $\wi\Z(s)=\wi\Z(s,t;\z;\wi\u)$ is the optimal trajectory, then 
\begin{align}\label{pmp4}
	\mathcal{W}(s,\wi\Z(s))=\mathcal{V}(s,\wi\Z(s)) \ \text{ for } \ s\in[t,T].
\end{align}
From \eqref{pmp3} and \eqref{pmp4} we conclude that 
\begin{align}\label{vmwmax}
	\mathcal{V}-\mathcal{W} \ \text{ attains a maximum of zero along each point of the trajectory } \ (s,\wi\Z(s)).  
\end{align}
Now using the fact that $\mathcal{V}$ is a viscosity subsolution (see Definition \ref{vaLuevarf}) we deduce
\begin{align}\label{pmp44}
 \mathcal{W}_s(s,\wi\Z(s))+
\mathcal{F}(\wi\Z(s),\mathcal{D}_{\z}\mathcal{W}(s,\wi\Z(s))\leq0.
\end{align}
On comparing \eqref{pmp22} and \eqref{pmp44} at $\tau=t$, we obtain the following
\begin{align}\label{pmp5}
 \mathcal{W}_s(s,\wi\Z(s))+
\mathcal{F}(\wi\Z(s),\mathcal{D}_{\z}\mathcal{W}(s,\wi\Z(s))=0.
\end{align}
Finally, on comparing \eqref{pmp5} with \eqref{pmp222} for $\tau=t$, we deduce \eqref{pmp234}.
\vskip 0.2cm
\noindent
\textbf{Step II:} \emph{Characterization of optimal control.} Let $\Phi$ be the solution to the linearized system \eqref{phx1}. Then, on taking the duality paring in the adjoint equation \eqref{adjequation} (along the optimal trajectory $\wi\Z(\cdot)$) with $\Phi$ and integrating from $s$ to $T$, we get
\begin{align*}
&\int_s^T \left\langle-\frac{\d\p(\zeta)}{\d s}+ \mu\mathcal{A}\p(\zeta)+\big(\mathfrak{B}'(\wi\Z(\zeta))\big)^{*}\p(\zeta)+
\alpha\p(\zeta)+\beta\mathfrak{C}'(\wi\Z(\zeta))\p(\zeta),\Phi(\zeta)\right\rangle
\d \zeta
\nonumber\\&=
\int_s^T \langle\mathcal{A}\wi\Z(\zeta),\Phi(\zeta)\rangle\d \zeta.
\end{align*}
On integrating by parts (it can justified by using the regularity on $\p$ and $\Phi$) and using the equation \eqref{phx1} satisfied by $\Phi$, we deduce further
\begin{align}\label{pmp6}
(\p(s),\Phi(s))-(\wi\Z(T),\Phi(T))=
	\int_s^T \big(\nabla\wi\Z(\zeta),\nabla\Phi(\zeta)\big)\d \zeta.
\end{align}
Since $\p\in \C([t,T];\H)$,  both sides of \eqref{pmp6} are continuous functions of $s$ and the equality holds for almost every $s\in[\tau,T]$, we conclude that \eqref{pmp6} actually holds for all $s\in[\tau,T]$. On comparing \eqref{pmp6} with \eqref{phx3} for $\tau=s$, we obtain
\begin{align*}
(\p(s),\x)=(\mathcal{D}_{\z}\mathcal{W}(s,\wi\Z(s)),\x),
\end{align*}
for every $\x\in\H$. Therefore, we conclude 
\begin{align*}
	\p(s)=\mathcal{D}_{\z}\mathcal{W}(s,\wi\Z(s)) \ \text{ for all } s\in[t,T].
\end{align*}
Moreover, from the regularity of the adjoint equation \eqref{adjequation}, we have
\begin{align}\label{adjregu}
	\p(\cdot)=\mathcal{D}_{\z}\mathcal{W}(\cdot,\wi\Z(\cdot))
	\in\C([t,T];\H)\cap\mathrm{L}^2(t,T;\V).
\end{align}
From \eqref{pmp234}, we also get the optimal control
\begin{align*}
\wi\u(\cdot)=\upsigma(\mathcal{D}_{\z}\mathcal{W}(\cdot,\wi\Z(\cdot)))\in
\C([t,T];\H),
\end{align*}
where the continuity of the optimal control follows from  \eqref{adjregu} and the fact that $\upsigma$ is locally Lipschitz. Moreover, in view of Proposition \ref{extregu}, we also conclude that $\Z(\cdot)\in\C([t,T];\D(\mathcal{A}))\cap\C^1((t,T);\H)$, which completes the proof of \eqref{optreg}.
\end{proof}

\begin{remark}\label{pbelongstosupdff}
In view of \eqref{vmwmax}, since $\mathcal{V}-\mathcal{W}$ has maximum at $(s,\wi\Z(s))$ and from Proposition \ref{WpropertyLip}, $\mathcal{W}$ has a Fr\'echet derivative $\mathcal{D}_{\z}\mathcal{W}(\cdot,\mathfrak{z})\in\V$ which is locally Lipschitz in $\mathfrak{z}$. Therefore, from the definition of superdifferential (see \ref{superdff}), we have 
		\begin{align}\label{DZDZ}
			\mathcal{D}_{\z}\mathcal{W}(\cdot,\wi\Z(\cdot))\in
			\mathcal{D}_{\z}^{+}\mathcal{V}(\cdot,\wi\Z(\cdot)).
	\end{align}
	Further, since $\p(\cdot)=\mathcal{D}_{\z}\mathcal{W}(\cdot,\Z(\cdot))$ is the unique solution of the adjoint system \eqref{adjequation}, therefore \eqref{DZDZ} yields the following
	\begin{align*}
	\p(\cdot)\in\mathcal{D}_{\z}^{+}\mathcal{V}(\cdot,\wi\Z(\cdot)).
	\end{align*}
\end{remark}

\subsection{A verification theorem}\label{nonsmoothverf}
Once the dynamic programming equation is solved, the next step is to determine how to construct an optimal feedback control directly from it. A classical way to obtain the optimal feedback control is by identifying the maximizer (or minimizer) of the Hamiltonian that appears in the dynamic programming equation. This approach, often referred to as a \emph{verification technique}, relies on the fact that the value function $\mathpzc{V}$ possesses sufficient smoothness. In contrast, the verification theorem formulated within the framework of viscosity solutions requires only mild assumptions on $\mathpzc{V}$. 
%As a consequence, these results apply to a much broader class of control problems than the classical smooth verification setting. 
Below we present a proof of viscosity solution based verification theorem.

\begin{theorem}\label{verifmain}
	The superdifferential $\mathcal{D}_{\z}^{+}\mathpzc{V}(s,\z)$ of the value function $\mathpzc{V}(s,\z)$ is non-empty for all $(s,\z)\in[t,T]\times\D(\mathcal{A})$, and further for all $\z\in\D(\mathcal{A})$, we have
	\begin{equation}\label{Dzpsup}
		\left\{
		\begin{aligned}
		\mathpzc{V}_s(s,\z)&=\mathcal{F}(s,\z,\p(s)) \ \text{ for a.e. } \ s\in[t,T],\\
		\text{ with } \ \p(s)&\in\mathcal{D}_{\z}^{+}\mathpzc{V}(s,\z), \ \z\in\H.
		\end{aligned}
		\right.
	\end{equation} 
	Moreover, the optimal control is given by the feedback relation:
	\begin{align*}
	\wi\u(\cdot)=\upsigma(\p(\cdot)) \ \text{ for some } \ 
	\p(\cdot)\in\mathcal{D}_{\z}^{+}\mathpzc{V}(\cdot,\wi\Z(\cdot)).
	\end{align*}
\end{theorem}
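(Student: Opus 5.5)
Fix $(s,\z)\in[t,T]\times\D(\mathcal{A})$ and use $(s,\z)$ as the initial datum of the control problem. By the existence theorem for an optimal pair, there is an optimal control $\wi\u\in\mathscr{U}$ on $[s,T]$ with optimal trajectory $\wi\Z(\cdot)=\Z(\cdot;s,\z,\wi\u)$.

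With this control frozen, define the functional $\mathcal{W}$ as in \eqref{wfunc}. By \eqref{pmp3}, $\mathcal{W}(\tau,\cdot)\ge\mathpzc{V}(\tau,\cdot)$ for all $\tau$, and $\mathcal{W}(s,\z)=\mathpzc{V}(s,\z)$. Proposition \ref{WpropertyLip}(iii) gives a Fr\'echet derivative $\mathcal{D}_{\z}\mathcal{W}(s,\z)\in\V$. Therefore
\begin{align*}
\mathpzc{V}(s,\z+\boldsymbol{h})-\mathpzc{V}(s,\z)\le \mathcal{W}(s,\z+\boldsymbol{h})-\mathcal{W}(s,\z)=\langle\mathcal{D}_{\z}\mathcal{W}(s,\z),\boldsymbol{h}\rangle+o(\|\boldsymbol{h}\|),
\end{align*}
which by Definition \ref{superdff} shows $\p(s):=\mathcal{D}_{\z}\mathcal{W}(s,\z)\in\mathcal{D}^+_{\z}\mathpzc{V}(s,\z)$. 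This is the argument of Remark \ref{pbelongstosupdff} applied at an arbitrary point, and it gives non-emptiness of the superdifferential. Along the trajectory, $\wi\Z(\zeta)$ is optimal for the datum $(\zeta,\wi\Z(\zeta))$ by the DPP \eqref{vdpp}. Hence the same reasoning with \eqref{pmp4} gives $\p(\zeta)\in\mathcal{D}^+_{\z}\mathpzc{V}(\zeta,\wi\Z(\zeta))$, and Theorem \ref{PMPpr} identifies $\p$ with the adjoint state solving \eqref{adjequation}.

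For the equation in \eqref{Dzpsup}, I would use both viscosity inequalities along the trajectory. From \eqref{pmp5} in the proof of Theorem \ref{PMPpr}, $\mathcal{W}_\zeta+\mathcal{F}(\wi\Z,\p)=0$ a.e. Next I need $\mathpzc{V}_\zeta=\mathcal{W}_\zeta$ at points where $\mathpzc{V}$ is differentiable in time. By \eqref{Lpsch2}, $\mathpzc{V}$ is locally Lipschitz in time, so Rademacher's theorem in one variable gives differentiability for a.e. $\zeta$. At such a point, $\mathpzc{V}-\mathcal{W}$ attains a maximum in the time variable at the interior point $\zeta$, with $\z$ held fixed and equality holding there. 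Because both functions are differentiable in time, the one-sided difference quotients force $\mathpzc{V}_\zeta=\mathcal{W}_\zeta$. This yields $\mathpzc{V}_\zeta=-\mathcal{F}(\cdot)$, matching \eqref{Dzpsup} up to the sign convention of \eqref{detHJB1var}. For fixed $\z$ and a.e. $s$, I would apply this at the initial point $(s,\z)$ of the trajectory started from $(s,\z)$.

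\textbf{Main obstacle.} The hardest step is this time-derivative identification, because the "a.e. $s$" sets depend on the trajectory. I would first handle it at Lebesgue points of $\wi\u$, where Proposition \ref{WpropertyLip}(ii) and \eqref{Wtauder} give continuity of $\mathcal{W}_\tau$. If that fails, I would fall back on the viscosity subsolution inequality \eqref{pmp44} combined with \eqref{pmp22}.

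Finally, the feedback law follows from the maximum condition \eqref{pmp234}. The map $\wi{\mathcal{F}}(\z,\q,\cdot)$ is strictly concave in $\u$, with quadratic term $-\tfrac12\|\u\|^2$, and its unique maximizer over the ball $\U$ is the projection of $-\q$ onto $\U$, namely $\upsigma(\q)$ from \eqref{upsigma}. Hence $\wi\u(\zeta)=\upsigma(\p(\zeta))$ with $\p(\zeta)\in\mathcal{D}^+_{\z}\mathpzc{V}(\zeta,\wi\Z(\zeta))$.
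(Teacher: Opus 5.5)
Your treatment of the superdifferential (non-emptiness from $\mathpzc{V}\le\mathcal{W}$ with equality at $(s,\z)$, as in Remark \ref{pbelongstosupdff}) and of the feedback law agrees with the paper. For the feedback law, the maximizer of $\wi{\mathcal{F}}(\z,\q,\cdot)$ over $\U$ is the projection of $-\q$, i.e.\ $\upsigma(\q)$. The gap is in the identity $\mathpzc{V}_s(s,\z)=\mathcal{F}(\z,\p(s))$: your two ingredients are never available at the same point. Along a trajectory started earlier, $\zeta$ is interior. But Rademacher only gives differentiability of $\mathpzc{V}(\cdot,\y)$ off a null set that depends on $\y$, so nothing is known about $\mathpzc{V}(\cdot,\wi\Z(\zeta))$ at $\zeta$. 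For fixed $\z$, Rademacher does apply at a.e.\ $s$. But then the frozen control (optimal from $(s,\z)$) lives only on $[s,T]$. So $\mathcal{W}(\cdot,\z)$ is defined only for $\tau\ge s$, and the maximum of $\mathpzc{V}(\cdot,\z)-\mathcal{W}(\cdot,\z)$ at $s$ is at an endpoint, not an interior point. Right difference quotients then give only $\mathpzc{V}_s(s,\z)\le\partial_s^+\mathcal{W}(s,\z)$, which is the paper's first inequality \eqref{valuedef11}; the reverse inequality \eqref{valuedef14} is missing. Your fallback does not supply it: \eqref{pmp22} and \eqref{pmp44} are inequalities for $\mathcal{W}_s$, and together they yield the maximum condition, not a lower bound for $\mathpzc{V}_s$. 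The paper gets the lower bound from a different competitor: the optimal control for $(s+\eps,\z)$, extended by the constant $\wi\u(s+\eps)$ on $(s,s+\eps]$, see \eqref{conscont}.

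Your idea can be completed, and then it is a cleaner route than the paper's. Extend the same frozen control to the left of $s$ by the constant $\wi\u(s)$. This is admissible, so $\mathpzc{V}(\tau,\z)\le\mathcal{W}(\tau,\z)$ on a two-sided neighbourhood of $s$ with equality at $s$, and left quotients give $\mathpzc{V}_s(s,\z)\ge\partial_s^-\mathcal{W}(s,\z)$. Via the semigroup identity and Fr\'echet differentiability of $\mathcal{W}(s,\cdot)$, both one-sided derivatives equal $\wi{\mathcal{F}}(\z,\p(s),\wi\u(s))$, provided $\wi\u$ is continuous at $s$. That continuity, and the validity of $\wi\u(s)=\upsigma(\p(s))$ at the initial time itself rather than only a.e., are supplied by \eqref{optreg}. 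The paper uses this implicitly when it invokes \eqref{pmp234} at the initial time. This resolves the obstacle you flagged. It also avoids the limit along the $\eps$-dependent optimal controls of \eqref{conscont}, which the paper only sketches.

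Finally, your $\mathpzc{V}_\zeta=-\mathcal{F}$ is not a matter of convention: \eqref{detHJB1var} and \eqref{Dzpsup} both say $\mathpzc{V}_s=+\mathcal{F}$. The minus sign is inherited from \eqref{pmp222}, where the bracket in \eqref{pmp2} is actually $-\wi{\mathcal{F}}$. Differentiating $\mathcal{W}(s,\wi\Z(s))$ gives $\mathcal{W}_s=-(\mathcal{D}_{\z}\mathcal{W},\wi\Z_s)-\frac12\big[\|\nabla\wi\Z\|_{\H}^2+\|\wi\u\|_{\H}^2\big]=\wi{\mathcal{F}}(\wi\Z,\mathcal{D}_{\z}\mathcal{W},\wi\u)$, consistent with \eqref{phx66} and \eqref{valuedef10}. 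With that correction your argument gives the stated identity.
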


\begin{proof}
Note that the value function $\mathpzc{V}(\cdot,\z)$ is Lipschitz continuous in time for $\z\in\D(\mathcal{A})$. Thus, by an application of the Rademacher theorem \cite[Theorem 9.2-2, pp. 738]{PGCnew},  the value function $\mathpzc{V}(\cdot,\z)$ is differentiable a.e. in $[t,T]$. Let $s\in[t,T]$ be such a point of differentiability. Let $\wi\u$ be the optimal control and $\wi\Z(\cdot)=\wi\Z(\cdot;s;\z;\wi\u)$ be the optimal trajectory corresponding to the initial data $(s,\z)\in[t,T]\times\D(\mathcal{A})$. By definition of the value function, we have 
\begin{align}\label{valuedef1}
\mathpzc{V}(s,\z) =\frac12\int_s^T 
\big[\|\nabla\wi\Z(\zeta)\|_{\H}^2+\|\wi\u(\zeta)\|_{\H}^2\big]\d \zeta
+\frac12\|\wi\Z(T)\|_{\H}^2.
\end{align}
Moreover, we assume that $\Z(\cdot)=\Z(\cdot;s+\eps;\z;\wi\u)$ is the trajectory corresponding to the optimal control $\wi\u$ and initial data $(s+\eps,\z)\in[t,T]\times\D(\mathcal{A})$, for $\eps>0$. Therefore, again by the definition of value function, we find 
\begin{align}\label{valuedef2}
		\mathpzc{V}(s+\eps,\z)\leq\frac12\int_{s+\eps}^T 
	\big[\|\nabla\Z(\zeta)\|_{\H}^2+\|\wi\u(\zeta)\|_{\H}^2\big]\d \zeta
	+\frac12\|\Z(T)\|_{\H}^2.
\end{align}
From \eqref{valuedef1} and \eqref{valuedef2}, we write
\begin{align}\label{valuedef3}
\mathpzc{V}(s+\eps,\z)-\mathpzc{V}(s,\z)&\leq
\frac12\big[\|\Z(T;s+\eps;\z;\wi\u)\|_{\H}^2-\|\wi\Z(T;s;\z;\wi\u)\|_{\H}^2\big]
\nonumber\\&\quad+\frac12\int_{s+\eps}^T \big[\|\nabla\Z(\zeta;s+\eps;\z;\wi\u)\|_{\H}^2-\|\nabla\wi\Z(\zeta;s;\z;\wi\u)\|_{\H}^2\big]\d \zeta
\nonumber\\&\quad-\frac12\int_s^{s+\eps} \big[\|\nabla\wi\Z(\zeta;s;\z;\wi\u)\|_{\H}^2+\|\wi\u(\zeta)\|_{\H}^2\big]\d \zeta.
\end{align}
On dividing by $\eps>0$ in \eqref{valuedef3}, we obtain
\begin{align}\label{valuedef4}
\frac{1}{\eps}\big[\mathpzc{V}(s+\eps,\z)-\mathpzc{V}(s,\z)\big]&\leq
\frac{1}{2\eps}\big[\|\Z(T;s+\eps;\z;\wi\u)\|_{\H}^2-\|\wi\Z(T;s;\z;\wi\u)\|_{\H}^2\big]
\nonumber\\&\quad+\frac{1}{2\eps}\int_{s+\eps}^T \big[\|\nabla\Z(\zeta;s+\eps;\z;\wi\u)\|_{\H}^2-\|\nabla\wi\Z(\zeta;s;\z;\wi\u)\|_{\H}^2\big]\d \zeta
\nonumber\\&\quad-\frac{1}{2\eps}\int_s^{s+\eps} \big[\|\nabla\wi\Z(\zeta;s;\z;\wi\u)\|_{\H}^2+\|\wi\u(\zeta)\|_{\H}^2\big]\d \zeta.
\end{align}
On passing the limit as $\eps\to0$, we obtain
\begin{align}\label{valuedef9}
	\mathpzc{V}_s(s,\z)\leq
	\int_s^T \langle\mathcal{A}\wi\Z(\zeta),\X(\zeta)\rangle\d \zeta+
	\big(\wi\Z(T),\X(T)\big)-[\|\nabla\z\|_{\H}^2+\|\wi\u(s)\|_{\H}^2],
\end{align}
for a.e. $s\in[t,T]$. From the adjoint system \eqref{phx3} (along the optimal trajectory $\wi\Z(\cdot)$), we have in $\V^*+\wi\L^{\frac{r+1}{r}}$
\begin{equation}\label{phx3wZ}
	\left\{
	\begin{aligned}
		-\frac{\d\p(s)}{\d s}+ \mu\mathcal{A}\p(s)+\big(\mathfrak{B}'(\wi\Z(s))\big)^{*}\p(s)+
		\alpha\p(s)+\beta\mathfrak{C}'(\wi\Z(s))\p(s)&=
		\mathcal{A}\wi\Z(s),  \\
		\p(T)&=\wi\Z(T)\in\H,
	\end{aligned}
	\right.
\end{equation}
 for a.e.  $ s\in[t,T].$ On substituting \eqref{phx3wZ} into \eqref{valuedef9} and performing  integration by parts (this can be justified due to the regularity on $\p(\cdot)$ and $\X(\cdot)$), and then using \eqref{phx2}, we deduce 
\begin{align}\label{valuedef10}
	\mathpzc{V}_s(s,\z)\leq
	(\p(s),\X(s))-[\|\nabla\z\|_{\H}^2+\|\wi\u(s)\|_{\H}^2],
\end{align}
where $\X(s)$ is the initial value of solution $\X(\cdot)$ to the linearized system \eqref{phx2} at time $s$, and is given by
\begin{align}\label{LNs}
	\X(s)=\mu\mathcal{A}\z+\mathfrak{B}(\z)+\alpha\z+
	\beta\mathfrak{C}(\z)-\u(s).
\end{align}
By inserting \eqref{LNs} into \eqref{valuedef10} and then using \eqref{pseudo} (pseudo-Hamiltonian) along with \eqref{pmp234} (Proposition \ref{PMPpr}), we arrive at
\begin{align}\label{valuedef11}
	\mathpzc{V}_s(s,\z)\leq\mathcal{F}(s,\z,\p(s)), \ \text{ for a.e. } \ s\in[t,T].
\end{align}
We need to prove the reverse inequality in \eqref{valuedef11}. Let us assume that $\wi\u$ is the optimal control in $[s+\eps,T]$ corresponding to the initial data $\z\in\D(\mathcal{A})$ at $s+\eps$ and let $\wi\Z(\cdot)=\Z(\cdot;s+\eps;\z;\wi\u)$ be the corresponding trajectory. Let us set 
\begin{equation}\label{conscont}
\u(\zeta)=
\left\{
\begin{aligned}
\wi\u(\zeta), \  &\text{ if } \ \zeta\in(s+\eps,T],\\
\wi\u(s+\eps), \ &\text{ if } \ \zeta\in(s,s+\eps].
\end{aligned}
\right.
\end{equation} 
We denote by $\Z=\Z(\cdot;s;\z;\u)$, the corresponding trajectory for the control $\u$ and the initial data $(s,\z)\in[t,T]\times\D(\mathcal{A})$.  Then, by the definition of value function, we write
\begin{align}
	\mathpzc{V}(s+\eps,\z)&=\frac12\int_{s+\eps}^T 
	\big[\|\nabla\wi\Z(\zeta;s+\eps;\z;\wi\u)\|_{\H}^2+\|\wi\u(\zeta)\|_{\H}^2\big]\d \zeta
	+\frac12\|\wi\Z(T;s+\eps;\z;\wi\u)\|_{\H}^2,\label{valuedef12}\\
	\text{ and } 
	\mathpzc{V}(s,\z)&\leq\frac12\int_{s}^T 
	\big[\|\nabla\Z(\zeta;s;\z;\u)\|_{\H}^2+\|\u(\zeta)\|_{\H}^2\big]\d \zeta
	+\frac12\|\Z(T;s;\z;\u)\|_{\H}^2.\label{valuedef13}
\end{align}
On combining \eqref{valuedef12}-\eqref{valuedef13} and utilizing \eqref{conscont}, we deduce the following inequality:
\begin{align*}
	\mathpzc{V}(s+\eps,\z)-\mathpzc{V}(s,\z)&\geq
	\frac12\big[\|\wi\Z(T;s+\eps;\z;\wi\u)\|_{\H}^2-\|\Z(T;s;\z;\wi\u)\|_{\H}^2\big]
	\nonumber\\&\quad+\frac12\int_{s+\eps}^T \big[\|\nabla\wi\Z(\zeta;s+\eps;\z;\wi\u)\|_{\H}^2-\|\nabla\Z(\zeta;s;\z;\u)\|_{\H}^2\big]\d \zeta
	\nonumber\\&\quad-\frac12\int_s^{s+\eps} \big[\|\nabla\Z(\zeta;s;\z;\u)\|_{\H}^2+\|\u(\zeta)\|_{\H}^2\big]\d \zeta.
\end{align*}
Upon dividing by $\eps>0$, passing to the limit as $\eps\to0$ and  following the same argument used in \eqref{valuedef9}, we obtain
\begin{align}\label{valuedef14}
	\mathpzc{V}_s(s,\z)\geq\mathcal{F}(s,\z,\p(s)), \ \text{ for a.e. } \ s\in[t,T].
\end{align}
Moreover, from Remark \ref{pbelongstosupdff}, we have 
\begin{align}\label{pbelongstosupdff1}
	\p(\cdot)\in\mathcal{D}_{\z}^{+}\mathpzc{V}(\cdot,\z) \ \text{ for } \ \z\in\H.
\end{align}
Combining \eqref{valuedef11}, \eqref{valuedef14}, and \eqref{pbelongstosupdff1}, we obtain \eqref{Dzpsup}, which completes the proof.
\end{proof}

\begin{appendix}\renewcommand{\thesection}{\Alph{section}}
	\numberwithin{equation}{section} 
		\section{Some essential estimates in negative norms}\label{negnormest}	
		We estimate the bound of the following term:
\begin{align}\label{T2est1}
\mathcal{T}_2
	=\big(\mathfrak{C}'(\Z_1)\Phi_1-\mathfrak{C}'(\Z_2)\Phi_2,
	(\mathcal{A}+\I)^{-1}\Psi\big),
\end{align}
which is used in \eqref{cphi2} of Proposition \ref{phinegatest}. Let us rewrite $\mathcal{T}_2$ as
\begin{align}\label{T2est2}
\mathcal{T}_2
=\underbrace{\big(\mathfrak{C}'(\Z_1)\Psi,
(\mathcal{A}+\I)^{-1}\Psi\big)}_{\mathcal{T}_2^1}+
\underbrace{\big((\mathfrak{C}'(\Z_1)-\mathfrak{C}'(\Z_2))\Phi_2,
(\mathcal{A}+\I)^{-1}\Psi\big)}_{\mathcal{T}_2^2}.
\end{align}
The term $\mathcal{T}_2^1$ can be estimated similarly to \eqref{cphi}, yielding
	\begin{align}\label{T2est3}
	\left|\big(\mathfrak{C}'(\Z_1)\Psi,
	(\mathcal{A}+\I)^{-1}\Psi\big)\right|\leq
	\frac{\mu}{4}\|\Psi\|_{\H}^2+C
	\|(\mathcal{A}+\I)^{-\frac12}\Psi\|_{\H}^2\|\Z_1\|_{\wi\L^{3(r-1)}}^{2(r-1)}.
\end{align}
We now estimate $\mathcal{T}_2^2$. From \eqref{C1d}, we write
\begin{align}\label{T2est3.1}
&(\mathfrak{C}'(\Z_1)-\mathfrak{C}'(\Z_2))\Phi_2
\nonumber\\&=
\mathcal{P}((|\Z_1|^{r-1}-|\Z_2|^{r-1})\Phi_2)+
(r-1)\mathcal{P}\big(\Z_1|\Z_1|^{r-3}(\Z_1\cdot\Phi_2)-
\Z_2|\Z_2|^{r-3}(\Z_2\cdot\Phi_2)\big),
\end{align}
provided $r\geq3$. Let us define the function of one variable $\varphi(\cdot):[0,1]\to\R^d$ by 
\begin{align*}
	\varphi(\theta):=((1-\theta)\Z_2+\theta\Z_1)
	|(1-\theta)\Z_2+\theta\Z_1|^{r-3}
	\big(((1-\theta)\Z_2+\theta\Z_1)\cdot\Phi_2\big), \text{ for  } \ \theta\in[0,1],
\end{align*}
where $\Z_1,\Z_2$ and $\Phi_2\in\R^d$. It is easy to see that
\begin{align*}
	\varphi(1)-\varphi(0)=\Z_1|\Z_1|^{r-3}(\Z_1\cdot\Phi_2)-
	\Z_2|\Z_2|^{r-3}(\Z_2\cdot\Phi_2), \ r\geq3.
\end{align*}
The derivative of $\varphi$ is given by
\begin{align*}
\varphi'(\theta)&=(\Z_1-\Z_2)|(1-\theta)\Z_2+\theta\Z_1|^{r-3}
\big(((1-\theta)\Z_2+\theta\Z_1)\cdot\Phi_2\big)
\nonumber\\&\quad+
((1-\theta)\Z_2+\theta\Z_1)
|(1-\theta)\Z_2+\theta\Z_1|^{r-3}\big((\Z_1-\Z_2)\cdot\Phi_2\big)
\nonumber\\&\quad+
(r-3)((1-\theta)\Z_2+\theta\Z_1)|(1-\theta)\Z_2+\theta\Z_1|^{r-5}
\big(((1-\theta)\Z_2+\theta\Z_1)\cdot(\Z_1-\Z_2)\big)
\nonumber\\&\qquad\times\big(((1-\theta)\Z_2+\theta\Z_1)\cdot\Phi_2\big)
\end{align*}
By an application of the Mean Value Theorem \cite[Theorem 9.3-1, pp. 739]{PGCnew}, we estimate
\begin{align}\label{T2est4}
	&\left|\Z_1|\Z_1|^{r-3}(\Z_1\cdot\Phi_2)-
	\Z_2|\Z_2|^{r-3}(\Z_2\cdot\Phi_2)\right|
	\nonumber\\&=
	|\varphi(1)-\varphi(0)|
	\nonumber\\&\leq
	\max\limits_{\theta\in[0,1]}|\varphi'(\theta)|
	\nonumber\\&=
	\max\limits_{\theta\in[0,1]}
	\bigg|(\Z_1-\Z_2)|(1-\theta)\Z_2+\theta\Z_1|^{r-3}
	\big(((1-\theta)\Z_2+\theta\Z_1)\cdot\Phi_2\big)
	\nonumber\\&\quad+
	((1-\theta)\Z_2+\theta\Z_1)
	|(1-\theta)\Z_2+\theta\Z_1|^{r-3}\big((\Z_1-\Z_2)\cdot\Phi_2\big)
	\nonumber\\&\quad+
	(r-3)((1-\theta)\Z_2+\theta\Z_1)|(1-\theta)\Z_2+\theta\Z_1|^{r-5}
	\big(((1-\theta)\Z_2+\theta\Z_1)\cdot(\Z_1-\Z_2)\big)
	\nonumber\\&\qquad\times\big(((1-\theta)\Z_2+\theta\Z_1)\cdot\Phi_2\big)
	\bigg|
	\nonumber\\&\leq (r-1)\left(\max\limits_{\theta\in[0,1]}|(1-\theta)\Z_2+\theta\Z_1|^{r-2}\right)
	|\Z_1-\Z_2|\,|\Phi_2|
	\nonumber\\&\leq 
	(r-1)\left(|\Z_1|+|\Z_2|\right)^{r-2}|\Z_1-\Z_2|\,|\Phi_2|,
\end{align}
for $r\ge 2$. Analogously, one can establish the following bound too:
\begin{align}\label{T2est4.1}
\big||\Z_1|^{r-1}\Phi_2-|\Z_2|^{r-1}\Phi_2\big|
\leq(r-1)\left(|\Z_1|+|\Z_2|\right)^{r-2}
|\Z_1-\Z_2|\,|\Phi_2|.
\end{align}
By utilizing \eqref{T2est4}-\eqref{T2est4.1} into \eqref{T2est3.1} and using H\"older's inequality, we compute 
\begin{align}\label{T2est4.2}
\|(\mathfrak{C}'(\Z_1)-\mathfrak{C}'(\Z_2))\Phi_2\|_{\wi\L^{\frac65}}
\leq\mathpzc{C}_{r}
\||\Z_1|+|\Z_2|\|_{\wi\L^{2(r-2)}}^{r-2}
\|\Z_1-\Z_2\|_{\wi\L^6}\|\Phi_2\|_{\wi\L^6}, \ r\geq2,
\end{align}
where $\mathpzc{C}_{r}:=(r-1)+(r-1)^2$. From \eqref{T2est4.2}, we estimate
$\mathcal{T}_2^2$ as
\begin{align}\label{T2est5}
\big|\mathcal{T}_2^2\big|&\leq
\big|\big((\mathfrak{C}'(\Z_1)-\mathfrak{C}'(\Z_2))\Phi_2,
(\mathcal{A}+\I)^{-1}\Psi\big)\big|
\nonumber\\&\leq
\|(\mathfrak{C}'(\Z_1)-\mathfrak{C}'(\Z_2))\Phi_2\|_{\wi\L^{\frac65}}
\|(\mathcal{A}+\I)^{-1}\Psi\|_{\wi\L^6}
\nonumber\\&\leq
\mathpzc{C}_{r}\||\Z_1|+|\Z_2|\|_{\wi\L^{2(r-2)}}^{r-2}
\|\Z_1-\Z_2\|_{\wi\L^6}\|\Phi_2\|_{\wi\L^6}
\|(\mathcal{A}+\I)^{-1}\Psi\|_{\wi\L^6}
\nonumber\\&\leq
C\|\Phi_2\|_{\V}^2
\|\Z_1-\Z_2\|_{\V}^2+C
\||\Z_1|+|\Z_2|\|_{\wi\L^{2(r-2)}}^{2(r-2)}\|(\mathcal{A}+\I)^{-\frac12}\Psi\|_{\H}^2,
\ \ r\geq2.
\end{align}
On combining \eqref{T2est3} and \eqref{T2est5} into \eqref{T2est2}, we find
\begin{align}\label{T2est6}
\big|\mathcal{T_2}\big|&\leq
\frac{\mu}{4}\|\Psi\|_{\H}^2+\kappa_2
\|(\mathcal{A}+\I)^{-\frac12}\Psi\|_{\H}^2\|\Z_1\|_{\wi\L^{3(r-1)}}^{2(r-1)}+
C\|\Phi_2\|_{\V}^2
\|\Z_1-\Z_2\|_{\V}^2
\nonumber\\&\quad+
C
\||\Z_1|+|\Z_2|\|_{\wi\L^{2(r-2)}}^{2(r-2)}\|(\mathcal{A}+\I)^{-\frac12}\Psi\|_{\H}^2,
\ r\geq2.
\end{align}

\end{appendix}

	\medskip
	\noindent
	\textbf{Acknowledgments:} 
	The first author gratefully acknowledges the financial support provided by the Ministry of Education, Government of India (MHRD). M. T. Mohan's research is supported by the National Board of Higher Mathematics (NBHM), Department of Atomic Energy, Government of India, under Project No. 02011/13/2025/NBHM(R.P)/R\&D II/1137.

	\medskip\noindent	\textbf{Declarations:} 
	
	\noindent 	\textbf{Ethical Approval:}   Not applicable 
	
	%\noindent  \textbf{Competing interests: } The authors declare no competing interests. 
	
	\noindent  \textbf{Conflict of interest: }On behalf of all authors, the corresponding author states that there is no conflict of interest.
	
	\noindent 	\textbf{Authors' contributions: } All authors have contributed equally. 
	
	\noindent 	\textbf{Funding: } NBHM, India, 02011/13/2025/NBHM(R.P)/R\&D II/1137 (M. T. Mohan)
	
	\noindent 	\textbf{Availability of data and materials: } Not applicable.

\end{document}